\documentclass{article}
\usepackage{amsfonts,amsmath,mathtools,amsthm,amssymb}
\numberwithin{equation}{section}

\theoremstyle{plain}
\newtheorem{defi}{Definition}[section]
\newtheorem{them}{Theorem}
\newtheorem{prop}{Proposition}[section]
\newtheorem{lemm}{Lemma}[section]
\newtheorem{corr}{Corollary}[section]

\bibliographystyle{plain}
\author{Tang Qian}
\title{A Characterization of Umbral Calculus Inspired by Fractional Sums}

\begin{document}
\maketitle
\begin{abstract}
	We will use analytic function theory and Fourier analysis to establish a characterization for some classical umbral calculus, which will focus on the generalization of the evaluation function. Although we cannot cover all the umbral calculus people care about, the part about Bernoulli numbers can still answer an open question about fractional sums raised by Müller and Schleicher in 2005 \cite{Mueller2005} and synthesize some common analysis results. We will only develop the results which are sufficient to serve the purpose of this article, but at the same time, we will briefly mention some possible extensions.
\end{abstract}

\section{Introduction}
Umbral calculus is a basic technique in combinatorics, systematized by Gian-Carlo Rota \cite{Rota1994}. Generally speaking, for a given sequence $(A_n)_{n\in\mathbb{N}}$, it asks us to treat $A_n$ as $A^n$ to simplify some combinatorics deduction and revealed some imperceptible results. \cite{Gessel2001} gives an appropriate introduction about the applications, and \cite{DiBucchianico1995} gives a comprehensive survey. Consider Bernoulli numbers $(\mathrm{B}_n)_{n\in\mathbb{N}}$, we can write Faulhaber's formula as
\begin{equation*}
	\sum_{k=1}^{n} k^m=
	\frac{1}{m+1}\sum_{k=0}^{m}\binom{m+1}{k}\mathrm{B}_k n^{m+1-k}=
	\frac{(\mathrm{B}+n)^{m+1}-\mathrm{B}^{m+1}}{m+1}.
\end{equation*}
Therefore for polynomial $P$, we have
\begin{equation}
	\sum_{k=1}^{n} P(k)=
	\int_{0}^{n} P(\mathrm{B}+k) \mathrm{d}k.
	\label{umbralFaulhaber}
\end{equation}
Those results strongly hint that we can consider replacing the polynomial $P$ and the integer $n$ in \eqref{umbralFaulhaber} by function $f$ and complex number $z$. However, it requires us to explain $f(\mathrm{B})$ and $\sum_{k=1}^{z}$ appropriately , and we need more motivation for doing this.

Fractional sums is a concept first formulated clearly by Müller and Schleicher \cite{Mueller2005,Mueller2010,Mueller2011}. They proposed six natural axioms on $\sum_{k=a}^{b}$, for appropriate function $f$ and complex number $a,b$, those axioms can derive the value of $\sum_{k=a}^{b}f(k)$ uniquely. Their results can be reformulated as below.

\begin{defi}[Müller-Schleicher fractional summation]
	\label{MSdef}
	Suppose $U\subseteq \mathbb{C}$ satisfied $U+1\subseteq U$ and $1\in U$, $f$ is a complex value function on $U$.
	
	If there is a polynomial sequence $(P_n)_{n\in\mathbb{N}}$ satisfied
	\begin{enumerate}
		\item The degree of the polynomial sequence $(P_n)_{n\in\mathbb{N}}$ is uniformly bounded;
		\item For all $y+1\in U$, the limits
		\begin{equation*}
			\lim\limits_{n\to\infty}\left(
			\sum_{k=1}^{n}(f(k+y)-f(k))-P_n(y)
			\right)=-Q(y)
		\end{equation*}
		exists;
		\item For all $y+1\in U$,
		\begin{equation*}
			Q(y+1)-Q(y)=f(y+1),
		\end{equation*}
	\end{enumerate}
	then we defined \textbf{Müller-Schleicher fractional summation} as
	\begin{equation}
		(\mathrm{MS})\text{-}\sum_{k=x}^{y}f(k)
		\coloneqq
		Q(y)-Q(x-1),
	\end{equation}
	where $x,y+1\in U$.
\end{defi}
For the well-definedness of Müller-Schleicher fractional summation(for short, MS-fractional summation), we referred to \cite{Mueller2010}. Many classical infinite sum identities can be reformulated in 
fractional summation and revealed a direct proof \cite{Mueller2005}.

In those reformulations, especially for the Gosper series, which Gosper discussed and proved by the properties of Bessel function \cite{Gosper1993}.
\begin{align}
	\sum_{n=0}^{\infty}
	\frac{(-1)^n}{n+\frac{1}{2}}
	\frac{\sin\sqrt{b^2+\pi^2(n+1/2)^2}}{\sqrt{b^2+\pi^2(n+1/2)^2}}
	&=
	\frac{\pi \sin b}{2b}
	\label{gospersin}\\
	\sum_{n=1}^{\infty}
	\frac{(-1)^n}{n^2}
	\cos\sqrt{b^2+\pi^2 n^2}
	&=
	\frac{\pi^2}{4}\left(
	\frac{\sin b}{b}-\frac{\cos b}{3}\right).
	\label{gospercos}
\end{align}
They noticed that \cite{Mueller2005,Mueller2010,Mueller2011} those series could be proved directly by an interchange of a fractional sum and an infinite series, i.e.
\begin{align*}
	\sum_{n=0}^{\infty}
	\frac{(-1)^n}{n+\frac{1}{2}}
	\frac{\sin\sqrt{b^2+\pi^2(n+1/2)^2}}{\sqrt{b^2+\pi^2(n+1/2)^2}}
	&=
	(\mathrm{MS})\text{-}
	\sum_{n=1/4}^{-1/4}
	\frac{1}{2n}
	\frac{\sin\sqrt{b^2+(2\pi n)^2}}{\sqrt{b^2+(2\pi n)^2}}\\\
	&\overset{?}{=}
	(\mathrm{MS})\text{-}
	\frac{\sin b}{2b}
	\sum_{n=1/4}^{-1/4}n^{-1},\\
\end{align*}
and
\begin{align*}
	\sum_{n=1}^{\infty}
	\frac{(-1)^n}{n^2}
	\cos\sqrt{b^2+\pi^2 n^2}
	&=
	(\mathrm{MS})\text{-}
	\frac{1}{4}
	\sum_{n=1}^{-1/2}
	\frac{1}{n^2}
	\cos\sqrt{b^2+(2\pi n)^2}\\
	&\overset{?}{=}
	(\mathrm{MS})\text{-}
	\frac{1}{4}\left(
	\cos b \sum_{n=1}^{-1/2} n^{-2}-
	\frac{2\pi^2 \sin b}{b}
	\sum_{n=1}^{-1/2}1
	\right).
\end{align*}
Where
\begin{align*}
	\frac{1}{2n}
	\frac{\sin\sqrt{b^2+(2\pi n)^2}}{\sqrt{b^2+(2\pi n)^2}}
	&=
	\frac{\sin b}{2b}n^{-1}+
	a_1 n+a_3 n^3+a_5 n^5+\cdots\\
	\frac{1}{n^2}
	\cos\sqrt{b^2+(2\pi n)^2}
	&=
	\cos b\cdot n^{-2}-
	\frac{2\pi^2 \sin b}{b}+
	a_2 n^2+a_4 n^4+\cdots,
\end{align*}
and we have \[
(\mathrm{MS})\text{-}
\sum_{n=1/4}^{-1/4}n^{2k-1}=0\quad
(\mathrm{MS})\text{-}\sum_{n=1}^{-1/2}n^{2k}=0
\]
\[
(\mathrm{MS})\text{-}
\sum_{n=1/4}^{-1/4}n^{-1}=\pi\quad
(\mathrm{MS})\text{-}
\sum_{n=1}^{-1/2}n^{-2}=-\frac{\pi^2}{3}\quad
(\mathrm{MS})\text{-}
\sum_{n=1}^{-1/2}1=-\frac{1}{2},
\] for all positive integer $k$. We have enough motivation to determine the sufficient condition on function for this interchangeability. In \cite{Mueller2010} they also concerned about the conditions that validated the following formula
\begin{equation}
	(\mathrm{MS})\text{-}
	\frac{\mathrm{d}}{\mathrm{d}x}
	\sum_{k=1}^{x}f(k)
	=c_f+
	(\mathrm{MS})\text{-}
	\sum_{k=1}^{x}
	\frac{\mathrm{d}}{\mathrm{d}k}f(k).
\end{equation}
Our goal is to justify those speculation.

\subsection{Framework}

Our main idea is generalized \eqref{umbralFaulhaber} to
\begin{equation}
	(\mathrm{MS})\text{-}
	\sum_{k=1}^{z} f(k)=
	F(\mathrm{B}+z)-F(\mathrm{B}),
\end{equation}
where $F$ is the primitive function of $f$, and transferred the difficulty of interchangeability to the definition and properties of $F(\mathrm{B})$, i.e. $\mathrm{eval}_{\mathrm{B}}(F)$. We achieved this in Theorem \ref{MScomp}.

In the next section, we will discuss the evaluation function including a kind of sequence $(A_n)_{n\in\mathbb{N}}$, such as Bernoulli numbers, and developed all the properties we need. This is achieved by the analytic function theory and Fourier transform, i.e.
\begin{equation}
	f(A)\coloneqq\mathrm{eval}_A(f)\coloneqq
	\frac{1}{\sqrt{2\pi}}
	\int_{-\infty-\mathrm{i}t}^{\infty-\mathrm{i}t}
	\hat{\mathcal{A}}(z)f(\mathrm{i}z)\mathrm{d}z,
\end{equation}
where $\mathcal{A}(z)$ is the generating function of sequence $(A_n)_{n\in\mathbb{N}}$. In fact, similar ideas can be founded in other places. In \cite{Ismail1997,Ismail1998}, the author treats $A_n$ as the $n$-th moment of some signed measure $\mu$; in \cite{Zeilberger1980}, the author also noticed connections with Fourier analysis; in \cite{Grabiner1988,Grabiner1989}, the author gives an extension of the umbral calculus to certain classes of entire function. Our discussion will follow a different way.

Section 3 will focus on the applications of section 2, Corollary \ref{MSinter}, \ref{MSinterderi} includes the validity of the speculation mentioned above and determined $c_f$. Finally, in Corollary \ref{Gosperseries}, we proved and promoted some conclusions in \cite{Gosper1993} as expected by Müller and Schleicher.

In Section 4 we introduced some possible developments.

\section{The Analysis of Umbrae}

The Fourier transform of $f\in L^1(\mathbb{R})$, which we take is
\begin{equation*}
	\hat{f}(\xi)\coloneqq
	\frac{1}{\sqrt{2\pi}}
	\int_{-\infty}^{\infty}
	f(x)\mathrm{e}^{-\mathrm{i}\xi x}\mathrm{d}x,
\end{equation*}
and it also induced the Fourier transform on slowly increasing distribution space $\mathcal{S}'(\mathbb{R})$. Sometimes we would interpret the integral of the Fourier transform in the meaning of the summation method.

For our purpose, we will generalize the concept of the Fourier transform as below.
\begin{defi}[Fourier transform]
	\label{genft}
	If $f$ is defined on $\mathbb{R}-\mathrm{i}t$, then we define the Fourier transform of $f$ is
	\begin{align}
		\hat{f}(\xi-\mathrm{i}s)
		&\coloneqq
		\frac{1}{\sqrt{2\pi}}
		\int_{-\infty-\mathrm{i}t}^{\infty-\mathrm{i}t}
		f(z)\mathrm{e}^{-\mathrm{i}(\xi-\mathrm{i}s) z}\mathrm{d}z\\
		&=
		\frac{\mathrm{e}^{-(\xi-\mathrm{i}s)t}}{\sqrt{2\pi}}
		\int_{-\infty}^{\infty}
		\mathrm{e}^{-sx}f(x-\mathrm{i}t)\mathrm{e}^{-\mathrm{i}\xi x}\mathrm{d}x.
		\notag
	\end{align}
\end{defi}
\noindent
It should be emphasized that $f$ could be exponential growth, and $\hat{f}$ could be undefined on $\mathbb{R}$.
\begin{defi}[Umbrae]
	If $\mathcal{A}$ is an \textbf{exponential type} analytic function defined on
	\begin{equation*}
		\Omega_{a,b}\coloneqq
		\{
		x-\mathrm{i}t:x\in\mathbb{R},t\in(a,b),
		\}
	\end{equation*}
	i.e. there exists $s\in\mathbb{R}$ such that for every $t\in(a,b)$,
	\begin{equation*}
		\left|\mathcal{A}(x-\mathrm{i}t)\right|
		\leq C(t)\mathrm{e}^{s|x|},
	\end{equation*}
	where $C$ is locally bounded, then
	\begin{enumerate}
		\item we called $A=(\mathcal{A},\Omega_{a,b})$ is a \textbf{umbrae};
		\item we called $\mathcal{A}$ is the \textbf{generating function} of $A$.
	\end{enumerate}
\end{defi}
\noindent
When $0\in\Omega_{a,b}$, the umbrae $A$ exactly corresponds to the sequence $(\mathcal{A}^{(n)}(0))_{n\in\mathbb{N}}$. $s$ can be treated as dependent on $t$, but we do not need such a detailed treatment here.

\begin{lemm}
	\label{welldeflem}
	Suppose $f$ is an exponential type analytic function defined on $\Omega_{a,b}$. $t_1,t_2\in(a,b)$.
	\newline
	If $f$ is Lebesgue integrable on $\mathbb{R}-\mathrm{i}t_1,\mathbb{R}-\mathrm{i}t_2$, then
	\begin{equation}
		\int_{-\infty-\mathrm{i}t_1}^{\infty-\mathrm{i}t_1}
		f(z)\mathrm{d}z
		=\int_{-\infty-\mathrm{i}t_2}^{\infty-\mathrm{i}t_2}
		f(z)\mathrm{d}z.
	\end{equation}
\end{lemm}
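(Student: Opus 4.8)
The plan is to shift the contour from the line $\mathbb{R}-\mathrm{i}t_1$ to the line $\mathbb{R}-\mathrm{i}t_2$ by Cauchy's integral theorem. Concretely, assume $t_1<t_2$ without loss of generality; then $[t_1,t_2]\subseteq(a,b)$, so the closed rectangle $R_T:=\{x-\mathrm{i}t:x\in[-T,T],\,t\in[t_1,t_2]\}$ lies inside the open set $\Omega_{a,b}$ on which $f$ is holomorphic, and hence $\oint_{\partial R_T}f(z)\,\mathrm{d}z=0$ for every $T>0$. The desired identity would follow by letting $T\to\infty$, provided the two vertical edges of $\partial R_T$ contribute nothing in the limit.

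The main obstacle is precisely that the vertical edges need not vanish for $f$ itself: the exponential-type bound only gives $|f(\pm T-\mathrm{i}t)|\le C'\mathrm{e}^{sT}$ for $t\in[t_1,t_2]$, where $C':=\sup_{t\in[t_1,t_2]}C(t)$, and this blows up. To repair the argument I would regularize with a Gaussian: for $\varepsilon>0$ set $f_\varepsilon(z):=\mathrm{e}^{-\varepsilon z^2}f(z)$, which is again holomorphic on $\Omega_{a,b}$. On the vertical edge $z=\pm T-\mathrm{i}t$ one has $|\mathrm{e}^{-\varepsilon z^2}|=\mathrm{e}^{-\varepsilon(T^2-t^2)}$, so
\[
|f_\varepsilon(\pm T-\mathrm{i}t)|\le C'\,\mathrm{e}^{\varepsilon t^2}\,\mathrm{e}^{sT-\varepsilon T^2}\xrightarrow[T\to\infty]{}0
\]
uniformly for $t$ in the bounded interval $[t_1,t_2]$, since $\varepsilon T^2$ eventually dominates $sT$. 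Therefore the vertical contributions to $\oint_{\partial R_T}f_\varepsilon$ die as $T\to\infty$, and Cauchy's theorem yields
\[
\int_{-\infty-\mathrm{i}t_1}^{\infty-\mathrm{i}t_1}f_\varepsilon(z)\,\mathrm{d}z=\int_{-\infty-\mathrm{i}t_2}^{\infty-\mathrm{i}t_2}f_\varepsilon(z)\,\mathrm{d}z
\]
for every $\varepsilon>0$; both sides are finite because $|f_\varepsilon(x-\mathrm{i}t_j)|\le\mathrm{e}^{\varepsilon t_j^2}|f(x-\mathrm{i}t_j)|$ and $f$ is, by hypothesis, Lebesgue integrable on $\mathbb{R}-\mathrm{i}t_1$ and $\mathbb{R}-\mathrm{i}t_2$.

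Finally I would let $\varepsilon\to0^+$. On each line $\mathbb{R}-\mathrm{i}t_j$ we have $f_\varepsilon(x-\mathrm{i}t_j)\to f(x-\mathrm{i}t_j)$ pointwise, and for $0<\varepsilon\le1$ the estimate $|f_\varepsilon(x-\mathrm{i}t_j)|\le\mathrm{e}^{t_j^2}|f(x-\mathrm{i}t_j)|$ furnishes an $\varepsilon$-independent integrable majorant. Dominated convergence then lets me pass the limit inside both integrals, turning the displayed identity for $f_\varepsilon$ into the claimed identity for $f$. The only genuinely delicate step is the one flagged above — taming the vertical edges despite the possible exponential growth of $f$ — and the Gaussian damping $\mathrm{e}^{-\varepsilon z^2}$ resolves it cleanly; the remaining estimates are routine.
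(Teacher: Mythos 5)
Your proposal is correct and follows essentially the same route as the paper: Gaussian damping $\mathrm{e}^{-\varepsilon z^2}$ to kill the vertical edges so the contour can be shifted by Cauchy's theorem, followed by dominated convergence as $\varepsilon\to 0^+$ using the assumed integrability of $f$ on the two lines. You merely spell out the rectangle argument and the $\mathrm{e}^{\varepsilon t_j^2}$ bookkeeping that the paper leaves implicit.
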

\begin{proof}
	First notice that $\mathrm{e}^{-\varepsilon z^2}f(z)$ locally uniformly tends to $0$ when $|z|\to\infty$, therefore
	\begin{equation*}
		\int_{-\infty-\mathrm{i}t_1}^{\infty-\mathrm{i}t_1}
		\mathrm{e}^{-\varepsilon z^2}f(z)\mathrm{d}z
		=\int_{-\infty-\mathrm{i}t_2}^{\infty-\mathrm{i}t_2}
		\mathrm{e}^{-\varepsilon z^2}f(z)\mathrm{d}z.
	\end{equation*}
	By the dominated convergence theorem, only need to let $\varepsilon\to0$.
\end{proof}
\noindent
This lemma is also valid for larger function classes, but again the exponential type condition is sufficient for our needs.

\begin{lemm}[$L^1$-Phragmén–Lindelöf principle]
	\label{L1-PL}
	Suppose $f$ is an exponential type analytic function defined on $\Omega_{a,b}$. $t_1,t_2\in(a,b),t_1<t_2$.
	\newline
	If $f$ is Lebesgue integrable on $\mathbb{R}-\mathrm{i}t_1,\mathbb{R}-\mathrm{i}t_2$, then for every $t\in[t_1,t_2]$, $f$ is Lebesgue integrable on $\mathbb{R}-\mathrm{i}t$, and $\ln \Vert f(\cdot-\mathrm{i}t)\Vert_1$ is a convex function for $t\in[t_1,t_2]$.
\end{lemm}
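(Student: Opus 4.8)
The plan is to reduce the assertion to a scalar Phragmén–Lindelöf estimate in the strip $\{t_1\le\operatorname{Re} w\le t_2\}$ by pairing $f$ against bounded test functions. Fix $g$ bounded and measurable with compact support in $[-R,R]$ and $\|g\|_\infty\le1$, and set, for $\operatorname{Re} w\in(a,b)$,
\[
	\Phi_g(w)\coloneqq\int_{-\infty}^{\infty}f(x-\mathrm{i}w)\,g(x)\,\mathrm{d}x
	=\int_{-R}^{R}f\big((x+\operatorname{Im} w)-\mathrm{i}\operatorname{Re} w\big)\,g(x)\,\mathrm{d}x .
\]
Because $w\mapsto x-\mathrm{i}w$ is affine and $f$ is analytic on $\Omega_{a,b}$, the integrand is holomorphic in $w$, and the exponential-type bound furnishes a majorant integrable in $x$ and uniform for $w$ in compact subsets of the strip; differentiating under the integral sign (or Morera plus Fubini) shows $\Phi_g$ is holomorphic on $\{a<\operatorname{Re} w<b\}$, hence continuous on the closed sub-strip.

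I would then record two estimates for $\Phi_g$ on that sub-strip. On the boundary lines, the substitution $y=x+\operatorname{Im} w$ and $\|g\|_\infty\le1$ give
\[
	|\Phi_g(t_j+\mathrm{i}\tau)|\le\int_{\mathbb{R}}|f(y-\mathrm{i}t_j)|\,\mathrm{d}y=\|f(\cdot-\mathrm{i}t_j)\|_1<\infty\qquad(j=1,2),
\]
which is exactly where the integrability hypothesis on $\mathbb{R}-\mathrm{i}t_1$ and $\mathbb{R}-\mathrm{i}t_2$ enters. In the interior, the exponential-type bound (with $s$ independent of $t$ and $C$ dominated by a constant $C_0$ on $[t_1,t_2]$) yields only $|\Phi_g(w)|\le 2RC_0\,\mathrm{e}^{sR}\,\mathrm{e}^{s|\operatorname{Im} w|}$ — a single exponential in $\operatorname{Im} w$, not a uniform bound, but far below the double-exponential growth that a Phragmén–Lindelöf principle in a strip can absorb. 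Multiplying $\Phi_g$ by the nowhere-vanishing entire function $\mathrm{e}^{-\ell(w)}$, where $\ell$ is the affine function with $\ell(t_j)=\log\|f(\cdot-\mathrm{i}t_j)\|_1$, normalizes the boundary modulus to $\le1$ without affecting the admissible growth; the strip Phragmén–Lindelöf principle then gives, throughout the strip,
\[
	|\Phi_g(w)|\le\|f(\cdot-\mathrm{i}t_1)\|_1^{\frac{t_2-\operatorname{Re} w}{t_2-t_1}}\;\|f(\cdot-\mathrm{i}t_2)\|_1^{\frac{\operatorname{Re} w-t_1}{t_2-t_1}} .
\]

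Finally, evaluating at real $w=t\in[t_1,t_2]$ and letting $g$ run through, e.g., $\mathbf{1}_{[-n,n]}\,\overline{f(\cdot-\mathrm{i}t)}/|f(\cdot-\mathrm{i}t)|$ (set to $0$ where $f$ vanishes), monotone convergence and $L^1$–$L^\infty$ duality show that $f(\cdot-\mathrm{i}t)\in L^1(\mathbb{R})$ with $\|f(\cdot-\mathrm{i}t)\|_1$ bounded by the right-hand side above. Taking logarithms gives the chord inequality for $t\mapsto\ln\|f(\cdot-\mathrm{i}t)\|_1$ between $t_1$ and $t_2$; running the same argument on every subinterval $[\alpha,\beta]\subseteq[t_1,t_2]$ (whose endpoints now verify the integrability hypothesis) upgrades this to convexity on all of $[t_1,t_2]$. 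The degenerate case $\|f(\cdot-\mathrm{i}t_1)\|_1=0$ or $\|f(\cdot-\mathrm{i}t_2)\|_1=0$ forces $f\equiv0$ by analyticity and is trivial. I expect the main obstacle to be precisely the interior estimate: $\Phi_g$ need not be bounded on the strip — indeed integrability of $f(\cdot-\mathrm{i}t)$ for interior $t$ is part of what is being proved — and it is the compact support of $g$, which confines the growth to a single exponential, together with the strip form of Phragmén–Lindelöf that allows the boundary bounds to propagate inward.
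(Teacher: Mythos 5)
Your proof is correct, but it is not the paper's argument: the paper offers no proof at all for this lemma and simply cites Theorem 4 on p.~479 of Ehrenpreis (1958), a general log-convexity theorem for $L^p$ norms of analytic functions in a strip. Your route is a self-contained classical alternative: pairing $f$ against bounded, compactly supported $g$ with $\Vert g\Vert_\infty\le 1$ produces a scalar holomorphic function $\Phi_g$ on the strip; the $L^1$ hypothesis on the two lines $\mathbb{R}-\mathrm{i}t_1$, $\mathbb{R}-\mathrm{i}t_2$ gives boundary bounds uniform in $\mathrm{Im}\,w$ after the translation $y=x+\mathrm{Im}\,w$; the compact support of $g$, the uniformity of the exponent $s$ in $t$, and the local boundedness of $C(t)$ on the compact interval $[t_1,t_2]$ confine $\Phi_g$ to single-exponential growth in $\mathrm{Im}\,w$, far below the double-exponential threshold that Phragmén–Lindelöf in a strip tolerates, so the three-lines bound propagates inward after the affine normalization $\mathrm{e}^{-\ell(w)}$; and duality with $g=\mathbf{1}_{[-n,n]}\overline{f(\cdot-\mathrm{i}t)}/|f(\cdot-\mathrm{i}t)|$ plus monotone convergence recovers both the integrability on intermediate lines and the chord inequality, which, iterated over subintervals (now legitimate endpoints), yields convexity; the degenerate case $\Vert f(\cdot-\mathrm{i}t_j)\Vert_1=0$ is killed by the identity theorem. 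You correctly identify the one point where care is needed — $\Phi_g$ is not a priori bounded in the strip, and it is precisely the compact support of $g$ that saves the growth estimate. The citation buys the paper brevity and a more general statement; your argument buys a proof of exactly the stated lemma from nothing beyond the three-lines principle and $L^1$--$L^\infty$ duality, making explicit where each hypothesis enters. One cosmetic repair: for $s<0$ your interior bound should be written with $|s|$, e.g. $|\Phi_g(w)|\le 2RC_0\,\mathrm{e}^{|s|R}\,\mathrm{e}^{|s||\mathrm{Im}\,w|}$, which changes nothing in the argument.
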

\begin{proof}
	See \cite{Ehrenpreis1958} p.479 Theorem 4.
\end{proof}

\subsection{Basic definition and properties}

We basically only care about a special class of umbrae.
\begin{defi}
	Suppose $A=(\mathcal{A},\Omega_{a,b})$ is a umbrae.
	\begin{enumerate}
		\item Denote
		\begin{align*}
			\alpha &= \inf \{
			s\in\mathbb{R}:
			\exists \text{locally bounded C}.
			\forall x\geq 0.|f(x-\mathrm{i}t)|\leq C(t)\mathrm{e}^{sx}
			\}\\
			\beta &= \sup \{
			s\in\mathbb{R}:
			\exists \text{locally bounded C}.
			\forall x\leq 0.|f(x-\mathrm{i}t)|\leq C(t)\mathrm{e}^{sx}
			\}.
		\end{align*}
		We called $\alpha$ is the \textbf{positive index}; $\beta$ is the \textbf{negative index}; $(\alpha,\beta)$ is the \textbf{index} of the umbrae $A$;
		\item If $\alpha\geq\beta$, we called umbrae $A$ is \textbf{singular};
		\item If $\alpha<\beta$, we called umbrae $A$ is \textbf{regular}, and the open interval $(\alpha,\beta)$ is called the \textbf{regular interval} of $A$;
		\item We called the open interval $(a,b)$ is the \textbf{dominating interval} of $A$.
	\end{enumerate}
\end{defi}
\noindent
The reasons for choosing these terms and signs will become clearer in the following discussion.
\begin{defi}[Umbrae calculus]
	Suppose $A_i=(\mathcal{A}_i,\Omega_{a_i,b_i})$ is a umbrae.
	\begin{enumerate}
		\item
		For $r\in\mathbb{R}$, $rA\coloneqq(\mathcal{A}(rz),r^{-1}\Omega_{a,b})$;
		\item $A_1+A_2\coloneqq(\mathcal{A}_1\mathcal{A}_2,\Omega_{a_1,b_1}\cap\Omega_{a_2,b_2})$, 
		$A_1-A_2\coloneqq A_1+(-1)A_2$;
		\item For $n\in\mathbb{N}$,
		$0\times A\coloneqq 0,(n+1)\times A\coloneqq (n\times A)+A$;
		\item
		$A_1[+]A_2\coloneqq(\mathcal{A}_1+\mathcal{A}_2,\Omega_{a_1,b_1}\cap\Omega_{a_2,b_2})$;
		\item
		$A_1[-]A_2\coloneqq(\mathcal{A}_1-\mathcal{A}_2,\Omega_{a_1,b_1}\cap\Omega_{a_2,b_2})$.
	\end{enumerate}
\end{defi}
\noindent
The scalar multiplication and addition in the above definition come from the classic umbral calculus. They do not satisfy the properties that these operations should normally have, but this definition is convenient for evaluation, and we will verify its well-definedness later.

\begin{defi}[Special umbrae]~ 
	\begin{enumerate}
		\item For $c\in\mathbb{C}$, $(c)\coloneqq(\mathrm{e}^{cz},\mathbb{C})$,
		the index is $(\mathrm{Re} c,\mathrm{Re} c)$;
		\item For $c\in\mathbb{C}$, $[c]\coloneqq(c,\mathbb{C})$,
		the index is
		$\begin{cases*}
			(0,0), &$c\neq 0$ \\
			(-\infty,\infty), &$c=0$
		\end{cases*}$
		\item $\mathrm{D}\coloneqq(z,\mathbb{C})$,
		the index is $(0,0)$;
		\item $\Delta\coloneqq(\mathrm{e}^z-1,\mathbb{C})$,
		the index is $(1,0)$;
		\item $\mathrm{B}\coloneqq(\frac{z\mathrm{e}^z}{\mathrm{e}^z-1},\Omega_{-2\pi,2\pi})$, the index is $(0,1)$;
		\item
		$\mathrm{E}\coloneqq(\frac{2}{\mathrm{e}^z+\mathrm{e}^{-z}},\Omega_{-\frac{\pi}{2},\frac{\pi}{2}})$, the index is $(-1,1)$.
	\end{enumerate}
\end{defi}
\noindent
In fact, we will identify the complex number $c$ to the umbrae $(c)$.

For regular umbrae $A=(\mathcal{A},\Omega_{a,b})$, we can use Definition \ref{genft} to calculate the Fourier transform of $\mathcal{A}$. By the knowledge of analysis, it is not difficult to see that $\hat{\mathcal{A}}$ is well-defined, and it also induced a regular umbrae.
\begin{them}[Correspondence]
	\label{corft}
	If $A=(\mathcal{A},\Omega_{a,b})$ is a regular umbrae with regular interval $(\alpha,\beta)$, then $\hat{A}\coloneqq(\hat{\mathcal{A}},\Omega_{\alpha,\beta})$ is a regular umbrae with regular interval includes $(-b,-a)$.
\end{them}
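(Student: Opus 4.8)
The plan is to reduce the entire statement to the rewritten form in Definition~\ref{genft}: taking absolute values there gives a one-parameter family of bounds on $\hat{\mathcal{A}}$ indexed by the height $t$ of the line of integration, and the theorem follows by choosing $t$ judiciously, separately on $\xi\ge 0$ and on $\xi\le 0$. First I would record well-definedness and analyticity (the ``not difficult to see'' remark preceding the theorem). Fix $w=\xi-\mathrm{i}s$ with $s\in(\alpha,\beta)$. On the line $\mathbb{R}-\mathrm{i}t$ with $t\in(a,b)$ the integrand $\mathcal{A}(z)\mathrm{e}^{-\mathrm{i}wz}$ has modulus $|\mathcal{A}(x-\mathrm{i}t)|\,\mathrm{e}^{-sx-\xi t}$, which is Lebesgue integrable in $x$ since $\alpha<s<\beta$ (this is exactly what the positive and negative indices say), so the integral in Definition~\ref{genft} converges absolutely. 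Because $z\mapsto\mathcal{A}(z)\mathrm{e}^{-\mathrm{i}wz}$ is itself of exponential type on $\Omega_{a,b}$ and integrable on each such line, Lemma~\ref{welldeflem} makes its integral independent of $t\in(a,b)$; and differentiating under the integral sign, with the same integrable function serving as a locally uniform majorant in $w$, yields analyticity of $\hat{\mathcal{A}}$ on $\Omega_{\alpha,\beta}$.

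The core estimate is obtained by bounding the rewritten form in Definition~\ref{genft}: for every $t\in(a,b)$ and $s\in(\alpha,\beta)$,
\[
	\bigl|\hat{\mathcal{A}}(\xi-\mathrm{i}s)\bigr|\le M(t,s)\,\mathrm{e}^{-\xi t},
	\qquad
	M(t,s):=\frac{1}{\sqrt{2\pi}}\int_{-\infty}^{\infty}\mathrm{e}^{-sx}\,\bigl|\mathcal{A}(x-\mathrm{i}t)\bigr|\,\mathrm{d}x .
\]
I would note that $s\mapsto M(t,s)$ is locally bounded on $(\alpha,\beta)$ using the pointwise split $\mathrm{e}^{-sx}\le\mathrm{e}^{-(\alpha+\delta)x}+\mathrm{e}^{-(\beta-\delta)x}$, valid for $s\in[\alpha+\delta,\beta-\delta]$ and all $x$, the right side times $|\mathcal{A}(\cdot-\mathrm{i}t)|$ being integrable by definition of the index. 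Fixing one $t_0\in(a,b)$ already gives $|\hat{\mathcal{A}}(\xi-\mathrm{i}s)|\le M(t_0,s)\,\mathrm{e}^{|t_0|\,|\xi|}$, so $\hat{\mathcal{A}}$ is an exponential type analytic function on $\Omega_{\alpha,\beta}$ and $\hat A=(\hat{\mathcal{A}},\Omega_{\alpha,\beta})$ is a umbrae. For the index $(\alpha',\beta')$ of $\hat A$: given $\sigma>-b$, choose $t\in(a,b)$ with $t>-\sigma$, so that $\mathrm{e}^{-\xi t}\le\mathrm{e}^{\sigma\xi}$ on $\xi\ge 0$; the master estimate then gives $|\hat{\mathcal{A}}(\xi-\mathrm{i}s)|\le M(t,s)\,\mathrm{e}^{\sigma\xi}$ with $M(t,\cdot)$ locally bounded, whence $\alpha'\le\sigma$, and letting $\sigma\downarrow -b$ gives $\alpha'\le -b$. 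Symmetrically, given $\tau<-a$, choose $t\in(a,b)$ with $t<-\tau$, so $\mathrm{e}^{-\xi t}\le\mathrm{e}^{\tau\xi}$ on $\xi\le 0$, hence $\beta'\ge\tau$ and then $\beta'\ge -a$. Since $a<b$ this gives $\alpha'\le -b<-a\le\beta'$, so $\hat A$ is regular and its regular interval contains $(-b,-a)$; the borderline cases $b=+\infty$ or $a=-\infty$ (forcing $\alpha'=-\infty$ or $\beta'=+\infty$) are handled by the same computation.

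I do not expect a genuine obstacle: the statement is essentially a matter of change-of-line bookkeeping, once Definition~\ref{genft} and Lemma~\ref{welldeflem} are in hand. The only point that needs a little care---rather than an idea---is keeping the majorants $M(t,s)$ locally bounded in $s$ on the dominating interval, since that uniformity is part of what the definitions of ``umbrae'' and of the index demand; the elementary split $\mathrm{e}^{-sx}\le\mathrm{e}^{-(\alpha+\delta)x}+\mathrm{e}^{-(\beta-\delta)x}$ takes care of it.
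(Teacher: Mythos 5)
Your proposal is correct, and it fills in precisely the routine argument the paper leaves implicit (the paper offers no proof of Theorem \ref{corft}, only the remark that well-definedness and regularity are ``not difficult to see''): absolute convergence of the integral in Definition \ref{genft} for $s\in(\alpha,\beta)$, $t$-independence via Lemma \ref{welldeflem}, and the bound $|\hat{\mathcal{A}}(\xi-\mathrm{i}s)|\leq M(t,s)\mathrm{e}^{-t\xi}$ with $t$ chosen near $b$ (resp.\ $a$) on $\xi\geq 0$ (resp.\ $\xi\leq 0$) to get positive index $\leq -b$ and negative index $\geq -a$. Your attention to the local boundedness of $M(t,\cdot)$ in $s$ is exactly the point the definitions require, so nothing is missing.
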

\noindent
Fourier transform establishes the correspondence between regular umbrae.

Since we essentially only can deal with the regular umbrae, it is necessary to establish a decomposition theorem for the singular one.
\begin{them}[Component]
	\label{component}
	Suppose $A=(\mathcal{A},\Omega_{a,b})$ is a singular umbrae with index $(\alpha,\beta)$.
	If there are regular umbrae pairs $(A_1,A_2)$ inducing the decomposition $A=A_1[+]A_2$, then one of the regular umbrae $A_i$ has positive index $\alpha$, called the \textbf{positive component} of this decomposition, and the other $A_j$ has negative index $\beta$, its opposite $[-1]+A_j$ called the \textbf{negative component} of this decomposition. $(A_i,[-1]+A_j)$ is called the \textbf{regular decomposition} of $A$.
\end{them}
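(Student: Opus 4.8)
The plan is to extract everything from the size comparison $|\mathcal{A}(z)|\leq|\mathcal{A}_1(z)|+|\mathcal{A}_2(z)|$ on the common strip $\Omega_{a,b}=\Omega_{a_1,b_1}\cap\Omega_{a_2,b_2}$, combined with the two rearrangements $\mathcal{A}_i=\mathcal{A}-\mathcal{A}_j$. Write $(\alpha_i,\beta_i)$ for the index of $A_i$. First I would record the \emph{forward} estimates: for $x\geq 0$ one has $|\mathcal{A}_i(x-\mathrm{i}t)|\leq C_i(t)\mathrm{e}^{s_i x}$ for every $s_i>\alpha_i$, and since $\mathrm{e}^{sx}$ is increasing in $s$ on $x\geq 0$ this gives $|\mathcal{A}(x-\mathrm{i}t)|\leq(C_1+C_2)(t)\mathrm{e}^{\max(s_1,s_2)x}$, whence $\alpha\leq\max(\alpha_1,\alpha_2)$; symmetrically, since $\mathrm{e}^{sx}$ is decreasing in $s$ on $x\leq 0$, we get $\beta\geq\min(\beta_1,\beta_2)$. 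Applying the identical argument to $\mathcal{A}_1=\mathcal{A}-\mathcal{A}_2$ and $\mathcal{A}_2=\mathcal{A}-\mathcal{A}_1$ on $\Omega_{a,b}$ yields the \emph{reverse} estimates $\alpha_i\leq\max(\alpha,\alpha_j)$ and $\beta_i\geq\min(\beta,\beta_j)$.

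The rest is a short count on these six numbers, driven by the fact that $A$ is singular ($\alpha\geq\beta$) while each $A_i$ is regular ($\alpha_i<\beta_i$). Relabel so that $\alpha_1=\max(\alpha_1,\alpha_2)$. The chain $\alpha_1=\max(\alpha_1,\alpha_2)\geq\alpha\geq\beta\geq\min(\beta_1,\beta_2)$ forces $\alpha_1\geq\min(\beta_1,\beta_2)$; since $\alpha_1<\beta_1$ this rules out $\beta_1\leq\beta_2$, so $\min(\beta_1,\beta_2)=\beta_2<\beta_1$ and $\alpha_1\geq\beta_2$. Then $\alpha\geq\beta_2>\alpha_2$, so $\max(\alpha,\alpha_2)=\alpha$, and the reverse estimate $\alpha_1\leq\max(\alpha,\alpha_2)=\alpha$ together with $\alpha\leq\alpha_1$ gives $\alpha=\alpha_1$: thus $A_1$ has positive index $\alpha$ and is the positive component. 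Likewise $\beta\leq\alpha=\alpha_1<\beta_1$ makes $\min(\beta,\beta_1)=\beta$, so $\beta_2\geq\min(\beta,\beta_1)=\beta$ with $\beta\geq\beta_2$ gives $\beta=\beta_2$: so $A_2$ has negative index $\beta$. Finally $[-1]+A_2=(-\mathcal{A}_2,\Omega_{a_2,b_2})$ has the same modulus as $A_2$, hence is regular with the same index, in particular negative index $\beta$, and $\mathcal{A}=\mathcal{A}_1+\mathcal{A}_2=\mathcal{A}_1-(-\mathcal{A}_2)$ exhibits $A=A_1[-]([-1]+A_2)$ as the claimed regular decomposition.

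The delicate point is the reverse estimates: the identity $\mathcal{A}_i=\mathcal{A}-\mathcal{A}_j$ lives only on $\Omega_{a,b}$, whereas $(\alpha_i,\beta_i)$ is measured over the possibly wider strip $\Omega_{a_i,b_i}$, so a priori one only controls $\mathcal{A}_i$ on part of its own dominating strip. I expect this to be the main obstacle. I would dispose of it by observing that one loses no generality in taking $A_1,A_2$ restricted to the common dominating interval $(a,b)$ — the restriction of a regular umbra to a sub-strip is again a regular umbra whose index is squeezed between the original and what the argument on $\Omega_{a,b}$ produces — and, to keep the stated indices intact, by invoking the convexity of the growth profile of an exponential-type function in the height parameter (the pointwise Phragmén--Lindelöf principle, parallel to Lemma \ref{L1-PL}) to propagate the estimate from $\Omega_{a,b}$ to the full strip. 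Everything else is the elementary bookkeeping above.
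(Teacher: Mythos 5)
A preliminary remark: the paper prints no proof of Theorem \ref{component} (the proof that follows it belongs to Theorem \ref{decompum}), so your argument has to stand on its own. Its core does: measuring all six indices over the common dominating interval $(a,b)$, the triangle inequality applied to $\mathcal{A}=\mathcal{A}_1+\mathcal{A}_2$, $\mathcal{A}_1=\mathcal{A}-\mathcal{A}_2$, $\mathcal{A}_2=\mathcal{A}-\mathcal{A}_1$ gives exactly your forward and reverse estimates, and your case analysis from $\alpha\geq\beta$ and $\alpha_i<\beta_i$ (ruling out $\beta_1\leq\beta_2$ after the relabelling, then forcing $\alpha_1=\alpha$ and $\beta_2=\beta$), together with the identification of $[-1]+A_2$ and of $A=A_1[-]([-1]+A_2)$, is correct and complete; note also that $\alpha<+\infty$ and $\beta>-\infty$ automatically, so no extended-real pathologies arise.

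The genuine gap is your last paragraph. The convexity of the growth indicator in the height (the pointwise Phragmén--Lindelöf principle, as in Lemma \ref{L1-PL}) lets you \emph{interpolate} a bound between two heights where you already have one; it can never \emph{propagate} a bound from the substrip $\Omega_{a,b}$ outward to the larger strips $\Omega_{a_i,b_i}$, and no argument can, because the statement with indices taken over the larger strips is false. Concretely, let $A=(1,\Omega_{0,1})$, singular with index $(0,0)$, put $\lambda=7/10$ and $g(z)=\mathrm{e}^{\mathrm{i}\lambda z^2/2}\bigl((1+\mathrm{e}^{z})(1+\mathrm{e}^{-z})\bigr)^{-1}$; since $\mathrm{Re}\bigl(\tfrac{\mathrm{i}\lambda}{2}(x-\mathrm{i}t)^2\bigr)=\lambda xt$, one has $|g(x-\mathrm{i}t)|\asymp\mathrm{e}^{\lambda tx-|x|}$ locally uniformly for $t\in(-\pi,\pi)$. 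Then $A_1=\bigl(\tfrac{1}{1+\mathrm{e}^{-z}}+g,\Omega_{0,2}\bigr)$ and $A_2=\bigl(\tfrac{1}{1+\mathrm{e}^{z}}-g,\Omega_{-1,1}\bigr)$ are regular with indices $(2\lambda-1,1)=(2/5,1)$ and $(\lambda-1,0)=(-3/10,0)$, and $A_1[+]A_2=(1,\Omega_{0,1})=A$, yet neither component has positive index $\alpha=0$, so the conclusion fails for either labelling. Hence your ``restrict to the common strip'' step is not a harmless normalization of the literal statement but its only correct reading: the theorem should be understood (and proved) with the indices of $A_1,A_2$ computed over $\Omega_{a,b}$, i.e.\ after restriction, which is also the only situation needed later, since the decompositions produced by Theorem \ref{decompum} and used in Definition \ref{singdef} and Proposition \ref{twotemprop} have the same dominating interval as $A$. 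With that reading your bookkeeping already finishes the proof, and the sentence invoking convexity to extend estimates to the full strip should simply be deleted.
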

\begin{them}[Decomposition]
	\label{decompum}
	If $A=(\mathcal{A},\Omega_{a,b})$ is a umbrae with index $(\alpha,\beta)$, then there are umbraes $A^+,A^-$ such that
	\begin{enumerate}
		\item $A^+=(\mathcal{A}^+,\Omega_{a,b})$ is a regular umbrae with index $(\alpha,\infty)$;
		\item $A^-=(\mathcal{A}^-,\Omega_{a,b})$ is a regular umbrae with index $(-\infty,\beta)$;
		\item $A=A^+[-]A^-$.
	\end{enumerate}
\end{them}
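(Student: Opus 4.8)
The plan is to construct $\mathcal A^{\pm}$ by an additive Plemelj-type splitting, carried out on the Fourier side, where by Theorem~\ref{corft} a condition on the growth exponents becomes a condition on the strip of analyticity.

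First, reduce to the regular case. When $A$ is singular the transform $\widehat{\mathcal A}$ cannot be split directly, since no line makes $\mathcal A$ integrable after any exponential weight; so the first task is to write $\mathcal A=\mathcal A_{1}-\mathcal A_{2}$ with $\mathcal A_{1},\mathcal A_{2}$ the generating functions of \emph{regular} umbrae on the same dominating strip $\Omega_{a,b}$ (intuitively: peel off a regular piece carrying the positive index $\alpha$ whose negative index has been pushed strictly past $\alpha$, leaving a regular remainder). Granting this, it suffices to decompose a regular $A$.

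For regular $A$ with regular interval $(\alpha,\beta)$, apply Theorem~\ref{corft}: $\widehat A=(\widehat{\mathcal A},\Omega_{\alpha,\beta})$ is regular, and since $\mathcal A$ is of exponential type on $\Omega_{a,b}$, a Paley--Wiener-type bound makes $\widehat{\mathcal A}(\cdot-\mathrm i s)$ decay exponentially along horizontal lines for $s$ in the interior of $(\alpha,\beta)$. With that decay, split
\begin{equation*}
	\widehat{\mathcal A}=g^{+}-g^{-},\qquad g^{+}(\zeta):=\frac{1}{2\pi\mathrm i}\int_{-\infty-\mathrm i\alpha'}^{\infty-\mathrm i\alpha'}\frac{\widehat{\mathcal A}(\eta)}{\eta-\zeta}\,\mathrm d\eta,\qquad g^{-}(\zeta):=\frac{1}{2\pi\mathrm i}\int_{-\infty-\mathrm i\beta'}^{\infty-\mathrm i\beta'}\frac{\widehat{\mathcal A}(\eta)}{\eta-\zeta}\,\mathrm d\eta,
\end{equation*}
with $\alpha<\alpha'<\beta'<\beta$; then $g^{+}$ continues analytically to $\Omega_{\alpha,\infty}$, $g^{-}$ to $\Omega_{-\infty,\beta}$, and Cauchy's theorem on the rectangle between the two contours gives the identity on $\Omega_{\alpha',\beta'}$, hence on $\Omega_{\alpha,\beta}$ after letting $\alpha'\downarrow\alpha$, $\beta'\uparrow\beta$ (if $\widehat{\mathcal A}$ decays too slowly on the contours for absolute convergence, subtract finitely many terms $\widehat{\mathcal A}(\zeta_{0})(\eta-\zeta_{0})^{-1}$, absorbing a polynomial). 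One checks that $g^{\pm}$ are of exponential type on their domains, so that $(g^{+},\Omega_{\alpha,\infty})$ and $(g^{-},\Omega_{-\infty,\beta})$ are regular umbrae; running Theorem~\ref{corft} in reverse produces $\mathcal A^{\pm}$ with $\widehat{\mathcal A^{\pm}}=g^{\pm}$, whose indices, read off from that theorem's dictionary (the dominating interval of $g^{\pm}$ becomes the regular interval of $\mathcal A^{\pm}$), are $(\alpha,\infty)$ and $(-\infty,\beta)$; the identity $\mathcal A=\mathcal A^{+}-\mathcal A^{-}$ transfers back by injectivity of the transform, and Lemma~\ref{welldeflem} lets one upgrade $\mathcal A^{\pm}$, obtained a priori only on a sub-strip, to the full $\Omega_{a,b}$.

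The hard part should be exactly the two places where the regular-case argument above is incomplete. First, the reduction of a singular umbra to a difference of regular ones on the \emph{same} strip $\Omega_{a,b}$: the obvious such splittings tend to shrink the dominating interval, so one needs a construction that preserves it — possibly replacing this reduction by a direct contour construction in the Borel plane, using the one-sided transforms $\int_{0}^{\infty}\mathcal A(u-\mathrm i t_{0})\mathrm e^{-uw}\,\mathrm d u$ and $\int_{-\infty}^{0}\mathcal A(u-\mathrm i t_{0})\mathrm e^{-uw}\,\mathrm d u$, which are analytic on the complementary half-planes $\operatorname{Re}w>\alpha$ and $\operatorname{Re}w<\beta$ even when $\mathcal A$ is singular, and reconstructing $\mathcal A^{\pm}$ from them by a (regularized) inverse transform. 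Second, one must show the Plemelj pieces, and hence $\mathcal A^{\pm}$, retain the full dominating interval $(a,b)$ and the \emph{exact} indices $\alpha,\beta$ — in particular that the polynomial introduced by any subtraction does not perturb them; here the convexity furnished by Lemma~\ref{L1-PL} and a careful choice of contours hugging the edges of $\Omega_{a,b}$ look necessary.
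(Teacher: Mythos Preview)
Your Plemelj/Cauchy-splitting plan is a genuinely different route from the paper's, and the gaps you flag are real: the ``reduce to the regular case'' step is essentially circular (writing a singular $\mathcal A$ as a difference of regular ones on the \emph{same} strip is already the content of the theorem), and for the regular case you have not shown that the Cauchy pieces $g^{\pm}$, and hence $\mathcal A^{\pm}$, live on the full $\Omega_{a,b}$ rather than a sub-strip --- Theorem~\ref{corft} only promises the regular interval of $\widehat A$ \emph{includes} $(-b,-a)$, so running it backwards does not recover the whole dominating interval.

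The paper bypasses all of this with a one-line multiplicative trick that works uniformly, singular or not. Let $\rho(z)=\frac{1}{\mathrm i\sqrt{2\pi}}\,z^{-1}\mathrm e^{-z^{2}}$ and set $\Lambda_{\pm}=(\rho,\Omega_{0,\pm\infty})$: the same formula, read on the upper and lower half-planes. By Theorem~\ref{corft} the inverse transforms $\check\rho_{\pm}$ are \emph{entire} (dominating strip $\Omega_{-\infty,\infty}$) with indices $(0,\infty)$ and $(-\infty,0)$ respectively, and the residue at $z=0$ gives $\check\rho_{+}-\check\rho_{-}=1$. Now put
\[
A^{+}\coloneqq A+\check\Lambda_{+},\qquad A^{-}\coloneqq A+\check\Lambda_{-},
\]
i.e.\ $\mathcal A^{\pm}=\mathcal A\cdot\check\rho_{\pm}$. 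Because $\check\rho_{\pm}$ are entire, the dominating strip stays exactly $\Omega_{a,b}$; because growth exponents add under pointwise products, the indices become $(\alpha,\infty)$ and $(-\infty,\beta)$; and $\mathcal A^{+}-\mathcal A^{-}=\mathcal A(\check\rho_{+}-\check\rho_{-})=\mathcal A$.

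The conceptual point your approach misses is that the right operation here is the umbral ``$+$'' (multiplication of generating functions), not ``$[+]$'': a single additive splitting $1=\check\rho_{+}-\check\rho_{-}$ of the constant function, chosen once and for all, is carried by multiplication to a decomposition of \emph{every} $\mathcal A$, with no case analysis, no contours depending on $A$, and no loss of the dominating strip.
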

\begin{proof}
	We denoted
	\begin{align*}
		\Lambda_+&=(\frac{1}{\mathrm{i}\sqrt{2\pi}}z^{-1}\mathrm{e}^{-z^2},\Omega_{0,\infty})
		=(\rho_+,\Omega_{0,\infty});\\
		\Lambda_-&=(\frac{1}{\mathrm{i}\sqrt{2\pi}}z^{-1}\mathrm{e}^{-z^2},\Omega_{-\infty,0})
		=(\rho_-,\Omega_{-\infty,0}).
	\end{align*}
	According to Theorem \ref{corft},
	\begin{enumerate}
		\item $\check{\Lambda}_+=(\check{\rho}_+,\Omega_{-\infty,\infty})$ 
		is a  regular umbrae with index $(0,\infty)$;
		\item $\check{\Lambda}_-=(\check{\rho}_-,\Omega_{-\infty,\infty})$ 
		is a  regular umbrae with index $(-\infty,0)$;
		\item $\check{\Lambda}_+[-]\check{\Lambda}_-=0$.
	\end{enumerate}
	Finally, we can take $A^+=A+\check{\Lambda}_+,A^-=A+\check{\Lambda}_-$.
\end{proof}
\noindent
This theorem allows us to transfer the problem of singular umbrae into regular umbraes.

Next, we define a class of functions that the umbrae can be evaluated. This function class will not be broad enough, but it is enough for our purposes.
\begin{defi}
	Suppose $A=(\mathcal{A},\Omega_{a,b})$ is a regular umbrae, the index is $(\alpha,\beta)$. $I$ is an open interval.
	If $I\cap(\alpha,\beta)\neq\varnothing$, and the analytic function $f$ satisfies 
	\begin{enumerate}
		\item 
		$f$ is an exponential type analytic function defined on $\{z\in\mathbb{C}:\mathrm{Re}z\in I\}$;
		\item For every $t\in I\cap(\alpha,\beta)$, $\hat{\mathcal{A}}(z)f(\mathrm{i}z)$ is Lebesgue integrable on $\mathbb{R}-\mathrm{i}t$,
	\end{enumerate}
	then the class formed by such functions is denoted by $\mathcal{T}_A(I)$.
\end{defi}
\begin{defi}[Evaluation function]
	\label{Evaldef}
	If $f\in\mathcal{T}_A(I)$, then we defined
	\begin{equation}
		f(A)\coloneqq\mathrm{eval}_A(f(z);z)
		\coloneqq \int_{-\infty-\mathrm{i}t}^{\infty-\mathrm{i}t}
		\hat{\mathcal{A}}(z)f(\mathrm{i}z)\mathrm{d}z,
	\end{equation}
	where $t\in I\cap(\alpha,\beta)$.
\end{defi}
\noindent
Lemma \ref{welldeflem} ensured the well-definedness of $f(A)$.

In fact, the argument from Lemma \ref{welldeflem} can also tell us the following result about summation method.
\begin{prop}[Gauss-Weierstrass summation method]
	Suppose $A=(\mathcal{A},\Omega_{a,b})$ is a regular umbrae, the index is $(\alpha,\beta)$. $f$ is an exponential type analytic function defined on $\{z\in\mathbb{C}:\mathrm{Re}z\in \tilde{I}\}$. $I,\tilde{I}$ are open intervals satisfied $I\subseteq \tilde{I}$.
	
	If $f\in\mathcal{T}_A(I)$, then for every $t\in\tilde{I}\cap(\alpha,\beta)$ we have
	\begin{equation}
		\label{GWsummation}
		\lim\limits_{\varepsilon\to 0}
		\int_{-\infty-\mathrm{i}t}^{\infty-\mathrm{i}t}
		\mathrm{e}^{-\varepsilon z^2}
		\hat{\mathcal{A}}(z)f(\mathrm{i}z)\mathrm{d}z
		=f(A).
	\end{equation}
\end{prop}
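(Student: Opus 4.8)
The plan is to insert the Gaussian regularizer so that a contour shift becomes legitimate on lines where the integrand need not be absolutely integrable, and then to remove the regularizer by dominated convergence on the one line where absolute integrability is guaranteed by $f\in\mathcal{T}_A(I)$. Throughout, write $g(z)\coloneqq\hat{\mathcal{A}}(z)f(\mathrm{i}z)$. First I would record that $g$ is analytic and of exponential type on the strip $\{x-\mathrm{i}s:s\in\tilde I\cap(\alpha,\beta)\}$: by Theorem \ref{corft} the function $\hat{\mathcal{A}}$ is an exponential type analytic function on $\Omega_{\alpha,\beta}$, and the hypothesis that $f$ is an exponential type analytic function on $\{\mathrm{Re}\,z\in\tilde I\}$ becomes, after the substitution $z\mapsto\mathrm{i}z$, the statement that $f(\mathrm{i}z)$ is exponential type analytic on $\{x-\mathrm{i}s:s\in\tilde I\}$; a product of two such functions is again of exponential type. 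Since $\tilde I\cap(\alpha,\beta)$ is an open interval containing the target level $t$ and (because $f\in\mathcal{T}_A(I)$ forces $I\cap(\alpha,\beta)\neq\varnothing$) at least one level $t_0\in I\cap(\alpha,\beta)$, the closed segment of levels between $t_0$ and $t$ lies inside this strip.

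Fix $\varepsilon>0$. For $z=x-\mathrm{i}s$ one has $|\mathrm{e}^{-\varepsilon z^2}|=\mathrm{e}^{-\varepsilon x^2+\varepsilon s^2}$, so the Gaussian factor dominates the exponential growth of $g$: the function $\mathrm{e}^{-\varepsilon z^2}g(z)$ is Lebesgue integrable on every line $\mathbb{R}-\mathrm{i}s$ with $s$ in the strip, and on the vertical segments $\{R-\mathrm{i}s:s\text{ between }t_0\text{ and }t\}$ it is bounded by $C\mathrm{e}^{-\varepsilon R^2}\mathrm{e}^{\sigma R}$ for suitable $C,\sigma$, which tends to $0$ uniformly in $s$ as $R\to\infty$ (using that the locally bounded constants in the exponential type estimates are bounded over the compact segment). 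Applying Cauchy's theorem to the rectangle with these vertical sides and letting $R\to\infty$ — the same argument as in the proof of Lemma \ref{welldeflem} — yields $\int_{-\infty-\mathrm{i}t}^{\infty-\mathrm{i}t}\mathrm{e}^{-\varepsilon z^2}g(z)\,\mathrm{d}z=\int_{-\infty-\mathrm{i}t_0}^{\infty-\mathrm{i}t_0}\mathrm{e}^{-\varepsilon z^2}g(z)\,\mathrm{d}z$ for every $\varepsilon>0$.

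It then remains to let $\varepsilon\to0$ on the right-hand side. Since $t_0\in I\cap(\alpha,\beta)$ and $f\in\mathcal{T}_A(I)$, the function $g$ is Lebesgue integrable on $\mathbb{R}-\mathrm{i}t_0$, and by Definition \ref{Evaldef} its integral there equals $f(A)$. On that line $|\mathrm{e}^{-\varepsilon z^2}g(z)|=\mathrm{e}^{-\varepsilon x^2+\varepsilon t_0^2}|g(x-\mathrm{i}t_0)|\leq\mathrm{e}^{t_0^2}|g(x-\mathrm{i}t_0)|$ whenever $\varepsilon\leq1$, an integrable majorant, while the integrand converges pointwise to $g(x-\mathrm{i}t_0)$; the dominated convergence theorem gives $\int_{-\infty-\mathrm{i}t_0}^{\infty-\mathrm{i}t_0}\mathrm{e}^{-\varepsilon z^2}g(z)\,\mathrm{d}z\to f(A)$. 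Combined with the displayed identity this is exactly \eqref{GWsummation}.

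The point that needs care is the contour shift in the middle step. One cannot quote Lemma \ref{welldeflem} directly for $\mathrm{e}^{-\varepsilon z^2}g(z)$, which is of order two rather than exponential type, nor for $g$ alone, since $g$ need not be integrable on $\mathbb{R}-\mathrm{i}t$; the role of the Gaussian is precisely to reconcile these two facts, and one must check (as sketched above, or by invoking the remark after Lemma \ref{welldeflem} that its conclusion holds for larger function classes) that the Gaussian decay still overwhelms the exponential growth on the vertical sides of the shifting rectangle. Everything else is routine unwinding of the definitions of exponential type and of $\mathcal{T}_A(I)$.
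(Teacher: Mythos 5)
Your proof is correct and takes essentially the same route as the paper: shift the contour of the Gaussian-regularized integral from $\mathbb{R}-\mathrm{i}t$ to $\mathbb{R}-\mathrm{i}t_0$ with $t_0\in I\cap(\alpha,\beta)$ (the same mechanism as in the proof of Lemma \ref{welldeflem}, the Gaussian supplying the decay on the vertical sides), and then let $\varepsilon\to 0$ on the line $\mathbb{R}-\mathrm{i}t_0$ by dominated convergence, where integrability comes from $f\in\mathcal{T}_A(I)$. The paper compresses exactly these two steps into a single display; your spelled-out rectangle estimate and majorant are just the details it leaves implicit.
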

\begin{proof}
	Because of
	\begin{equation*}
		\int_{-\infty-\mathrm{i}t}^{\infty-\mathrm{i}t}
		\mathrm{e}^{-\varepsilon z^2}
		\hat{\mathcal{A}}(z)f(\mathrm{i}z)\mathrm{d}z
		=
		\int_{-\infty-\mathrm{i}t_0}^{\infty-\mathrm{i}t_0}
		\mathrm{e}^{-\varepsilon z^2}
		\hat{\mathcal{A}}(z)f(\mathrm{i}z)\mathrm{d}z,
	\end{equation*}
	only need to let $\varepsilon\to0$.
\end{proof}
\noindent
We can also use \eqref{GWsummation} to define the evaluation function, but it will complicate the limit calculus.

\subsection{Well-definedness}

Intuitively we have $f(A_1[-]A_2)=f(A_1)-f(A_2)$, which inspired the following proposition.

\begin{prop}
	\label{twotemprop}
	Suppose $A=(\mathcal{A},\Omega_{a,b})$ is a singular umbrae with index $(\alpha,\beta)$. $I$ is an open interval.
	If $I\supseteq[\beta,\alpha]$, and the following conditions are satisfied,
	\begin{enumerate}
		\item $(A_1^+,A_1^-),(A_2^+,A_2^-)$ is the regular decomposition of $A$;
		\item $f\in
		\left(\mathcal{T}_{A_1^+}(I)\cap\mathcal{T}_{A_1^-}(I)\right)
		\cap
		\left(\mathcal{T}_{A_2^+}(I)\cap\mathcal{T}_{A_2^-}(I)\right)$,
	\end{enumerate}
	 then
	 \begin{enumerate}
	 	\item $f(A_1^+)-f(A_1^-)=f(A_2^+)-f(A_2^-)$;
	 	\item $A_0=A_1^+[-]A_2^+=A_1^-[-]A_2^-$ is a regular umbrae;
	 	\item $f\in\mathcal{T}_{A_0}(I)$.
	 \end{enumerate}
\end{prop}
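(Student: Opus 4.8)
The plan is to establish the three conclusions in the order (2), (3), (1): once $A_0$ is known to be a regular umbrae with $f\in\mathcal{T}_{A_0}(I)$, the evaluation $f(A_0)$ is meaningful, and (1) will follow from $f(A_1^+)-f(A_2^+)=f(A_0)=f(A_1^-)-f(A_2^-)$. First recall the anatomy of a regular decomposition $(A_i^+,A_i^-)$ produced by Theorem \ref{component}: $A_i^+$ is a regular umbrae whose positive index is exactly $\alpha$ (so its negative index $\beta_i^+$ satisfies $\beta_i^+>\alpha$), $A_i^-$ is a regular umbrae whose negative index is exactly $\beta$ (so its positive index $\alpha_i^-$ satisfies $\alpha_i^-<\beta$), and the generating functions obey $\mathcal{A}_i^+-\mathcal{A}_i^-=\mathcal{A}$. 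Subtracting the two instances of this identity yields
\begin{equation*}
	\mathcal{A}_1^+-\mathcal{A}_2^+=\mathcal{A}_1^--\mathcal{A}_2^-,
\end{equation*}
so $A_1^+[-]A_2^+$ and $A_1^-[-]A_2^-$ carry the same generating function; that they also share the same dominating interval is routine bookkeeping with the domains of the components (each contains $(a,b)$), and we denote the common umbrae by $A_0=(\mathcal{A}_0,\Omega_{a_0,b_0})$.

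For (2), estimate the index $(\alpha_0,\beta_0)$ of $A_0$ from its two presentations. Since the positive index of a difference of generating functions is at most the larger of the two, the presentation $\mathcal{A}_0=\mathcal{A}_1^--\mathcal{A}_2^-$ gives $\alpha_0\le\max(\alpha_1^-,\alpha_2^-)<\beta$; since the negative index of a difference is at least the smaller of the two, the presentation $\mathcal{A}_0=\mathcal{A}_1^+-\mathcal{A}_2^+$ gives $\beta_0\ge\min(\beta_1^+,\beta_2^+)>\alpha$. As $A$ is singular we have $\beta\le\alpha$, hence $\alpha_0<\beta\le\alpha<\beta_0$, so $A_0$ is regular and its regular interval $(\alpha_0,\beta_0)$ strictly contains $[\beta,\alpha]$; in particular $I\cap(\alpha_0,\beta_0)\supseteq[\beta,\alpha]\ne\varnothing$, the non-emptiness needed even to speak of $\mathcal{T}_{A_0}(I)$.

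For (3), the hypothesis $f\in\mathcal{T}_{A_1^+}(I)$ already contains that $f$ is an exponential type analytic function on $\{\mathrm{Re}\,z\in I\}$, so only the integrability condition must be checked. By linearity of the Fourier transform (Definition \ref{genft}), on every line $\mathbb{R}-\mathrm{i}t$ with $t$ in the common strip $(\alpha,\min(\beta_1^+,\beta_2^+))$ one has $\widehat{\mathcal{A}_0}=\widehat{\mathcal{A}_1^+}-\widehat{\mathcal{A}_2^+}$, so for such $t$ lying in $I$ the hypothesis $f\in\mathcal{T}_{A_1^+}(I)\cap\mathcal{T}_{A_2^+}(I)$ forces $\widehat{\mathcal{A}_0}(z)f(\mathrm{i}z)\in L^1(\mathbb{R}-\mathrm{i}t)$; symmetrically $\widehat{\mathcal{A}_0}=\widehat{\mathcal{A}_1^-}-\widehat{\mathcal{A}_2^-}$ on the lines with $t\in(\max(\alpha_1^-,\alpha_2^-),\beta)$, giving integrability there from $f\in\mathcal{T}_{A_1^-}(I)\cap\mathcal{T}_{A_2^-}(I)$. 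These two families of lines bracket $[\beta,\alpha]$ from below and above, $\widehat{\mathcal{A}_0}(z)f(\mathrm{i}z)$ is an exponential type analytic function on the intervening strip, so Lemma \ref{L1-PL} interpolates $L^1$-integrability to every line in between, which exhausts $I\cap(\alpha_0,\beta_0)$; therefore $f\in\mathcal{T}_{A_0}(I)$. Finally, for (1), choose $t_+\in I\cap(\alpha,\min(\beta_1^+,\beta_2^+))$ — nonempty since $I$ is an open interval containing $\alpha$ and $\min(\beta_1^+,\beta_2^+)>\alpha$ — and compute on that line, using Definition \ref{Evaldef} for both $A_1^+$ and $A_2^+$:
\begin{equation*}
	f(A_1^+)-f(A_2^+)=\int_{-\infty-\mathrm{i}t_+}^{\infty-\mathrm{i}t_+}\bigl(\widehat{\mathcal{A}_1^+}(z)-\widehat{\mathcal{A}_2^+}(z)\bigr)f(\mathrm{i}z)\,\mathrm{d}z=\int_{-\infty-\mathrm{i}t_+}^{\infty-\mathrm{i}t_+}\widehat{\mathcal{A}_0}(z)f(\mathrm{i}z)\,\mathrm{d}z=f(A_0),
\end{equation*}
the last equality being contour-independence of the evaluation integral (Lemma \ref{welldeflem}), legitimate now that $f\in\mathcal{T}_{A_0}(I)$. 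Repeating with $t_-\in I\cap(\max(\alpha_1^-,\alpha_2^-),\beta)$ gives $f(A_1^-)-f(A_2^-)=f(A_0)$, and subtracting the two identities proves $f(A_1^+)-f(A_1^-)=f(A_2^+)-f(A_2^-)$.

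I expect the main obstacle to be the careful tracking of the horizontal strips: for each of $\widehat{\mathcal{A}_1^\pm},\widehat{\mathcal{A}_2^\pm},\widehat{\mathcal{A}_0}$ one must keep straight the strip on which it is defined and the strip on which the additivity of the Fourier transform literally holds, and in particular one must handle the degenerate case in which $(\alpha_0,\beta_0)$ strictly overshoots $(\max_i\alpha_i^-,\min_i\beta_i^+)$ (possible only when $\beta_1^+=\beta_2^+$, respectively $\alpha_1^-=\alpha_2^-$, with extra cancellation), where the interpolation in step (3) must be run with the $\mathcal{T}_{A_i^\pm}(I)$ hypotheses pushed all the way to the boundary of those strips. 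The assertion that the two presentations of $A_0$ have the same dominating interval is a smaller instance of the same kind of housekeeping.
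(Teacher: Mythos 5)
Your proof is correct and follows essentially the same route as the paper: regularity of $A_0$ from the index bounds of the two presentations (Theorem \ref{component}), linearity of the Fourier transform to get integrability of $\hat{\mathcal{A}}_0(z)f(\mathrm{i}z)$ on lines just above $\alpha$ and just below $\beta$, Lemma \ref{L1-PL} to interpolate and obtain $f\in\mathcal{T}_{A_0}(I)$, and Lemma \ref{welldeflem} to shift the contour and deduce $f(A_1^+)-f(A_2^+)=f(A_1^-)-f(A_2^-)$. The boundary case you flag (where $(\alpha_0,\beta_0)$ overshoots the components' common strips through cancellation) is likewise left implicit in the paper's own, terser argument, so your write-up is if anything more explicit than the original.
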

\begin{proof}
	According to Theorem \ref{component}, the regular interval of the component of the regular decomposition always intersects with $I$, and $A_0=A_1^+[-]A_2^+=A_1^-[-]A_2^-$ is a regular umbrae.
	
	Notice that
	\begin{align*}
		f\in\mathcal{T}_{A_1^+}(I)\cap\mathcal{T}_{A_2^+}(I)
		&\Rightarrow f\in\mathcal{T}_{A_0}(I\cap(\alpha,\alpha+\varepsilon))\\
		f\in\mathcal{T}_{A_1^-}(I)\cap\mathcal{T}_{A_2^-}(I)
		&\Rightarrow f\in\mathcal{T}_{A_0}(I\cap(\beta-\varepsilon,\beta)).
	\end{align*}
	Since $f$ is exponential type, $f(A_1^+)-f(A_2^+)=f(A_1^-)-f(A_2^-)$ holds from Lemma \ref{welldeflem}. Lemma \ref{L1-PL} ensured that $f\in\mathcal{T}_{A_0}(I)$.
\end{proof}
\begin{defi}
	\label{singdef}
	Suppose $A=(\mathcal{A},\Omega_{a,b})$ is a singular umbrae with index $(\alpha,\beta)$. $I$ is an open interval includes $[\beta,\alpha]$.

	If there is a regular decomposition $(A^+,A^-)$ of $A$ such that $f\in\mathcal{T}_{A^+}(I)\cap\mathcal{T}_{A^-}(I)$, then we defined $f(A)\coloneqq f(A^+)-f(A^-)$.
\end{defi}
\noindent
Here we are not going to give a complete definition of $f(A)$ for the singular umbrae $A$. The general form of Proposition \ref{twotemprop} can be derived by Lemma \ref{welldeflem} and Theorem \ref{decompum}.

\begin{prop}
	\label{welldefeval}
	Suppose $A_1=(\mathcal{A}_1,\Omega_{a_1,b_1}),A_2=(\mathcal{A}_2,\Omega_{a_2,b_2})$ are regular umbrae with index $(\alpha_1,\beta_1),(\alpha_2,\beta_2)$.
	
	If $f(z_1,z_2)$ satisfied
	\begin{enumerate}
		\item For every $\mathrm{Re} z_2\in I_2$ we have $f(\cdot,z_2)\in\mathcal{T}_{A_1}(I_1)$;
		\item $f(A_1,\cdot)\in\mathcal{T}_{A_2}(I_2)$;
		\item For every $\mathrm{Re} z_1\in I_1$ we have $f(z_1,\cdot)\in\mathcal{T}_{A_2}(I_2)$;
		\item $f(\cdot,A_2)\in\mathcal{T}_{A_1}(I_1)$;
		\item There exists $t_1\in I_1\cap(\alpha_1,\beta_1),t_2\in I_2\cap(\alpha_2,\beta_2)$ such that $\hat{\mathcal{A}}_1(z_1)\hat{\mathcal{A}}_2(z_2)f(\mathrm{i}z_1,\mathrm{i}z_2)$ is Lebesgue integrable on $(\mathbb{R}-\mathrm{i}t_1)\times(\mathbb{R}-\mathrm{i}t_2)$,
	\end{enumerate}
	then we have
	\begin{equation}
		\mathrm{eval}_{A_2}(\mathrm{eval}_{A_1}(f(z_1,z_2);z_1);z_2)=
		\mathrm{eval}_{A_1}(\mathrm{eval}_{A_2}(f(z_1,z_2);z_2);z_1),
	\end{equation}
	i.e. $f(A_1,A_2)$ is well-defined.
\end{prop}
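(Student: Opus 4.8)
This proposition is a Fubini-type interchange statement, and the plan is to expand both iterated evaluations into double integrals of a single kernel over the product line $(\mathbb{R}-\mathrm{i}t_1)\times(\mathbb{R}-\mathrm{i}t_2)$ and then invoke the classical Fubini theorem. Fix $t_1\in I_1\cap(\alpha_1,\beta_1)$ and $t_2\in I_2\cap(\alpha_2,\beta_2)$ as in hypothesis (5). By hypothesis (1), for every $z_2$ with $\mathrm{Re}\,z_2\in I_2$ the function $z_1\mapsto f(z_1,z_2)$ lies in $\mathcal{T}_{A_1}(I_1)$, so $g(z_2)\coloneqq\mathrm{eval}_{A_1}(f(z_1,z_2);z_1)$ is defined and, by Lemma \ref{welldeflem}, may be computed along the line $\mathbb{R}-\mathrm{i}t_1$. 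Hypothesis (2) says $g=f(A_1,\cdot)\in\mathcal{T}_{A_2}(I_2)$, so Definition \ref{Evaldef} together with Lemma \ref{welldeflem} gives
\begin{equation*}
	\mathrm{eval}_{A_2}(\mathrm{eval}_{A_1}(f(z_1,z_2);z_1);z_2)
	=\int_{-\infty-\mathrm{i}t_2}^{\infty-\mathrm{i}t_2}
	\hat{\mathcal{A}}_2(z_2)\left(
	\int_{-\infty-\mathrm{i}t_1}^{\infty-\mathrm{i}t_1}
	\hat{\mathcal{A}}_1(z_1)f(\mathrm{i}z_1,\mathrm{i}z_2)\,\mathrm{d}z_1
	\right)\mathrm{d}z_2,
\end{equation*}
where the passage $g(z_2)\mapsto g(\mathrm{i}z_2)$ is legitimate because $\mathrm{Re}(\mathrm{i}z_2)=t_2\in I_2$ for $z_2\in\mathbb{R}-\mathrm{i}t_2$.

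Symmetrically, hypotheses (3) and (4) yield
\begin{equation*}
	\mathrm{eval}_{A_1}(\mathrm{eval}_{A_2}(f(z_1,z_2);z_2);z_1)
	=\int_{-\infty-\mathrm{i}t_1}^{\infty-\mathrm{i}t_1}
	\hat{\mathcal{A}}_1(z_1)\left(
	\int_{-\infty-\mathrm{i}t_2}^{\infty-\mathrm{i}t_2}
	\hat{\mathcal{A}}_2(z_2)f(\mathrm{i}z_1,\mathrm{i}z_2)\,\mathrm{d}z_2
	\right)\mathrm{d}z_1.
\end{equation*}
Both right-hand sides are now iterated integrals, taken in the two possible orders, of the single kernel $K(z_1,z_2)\coloneqq\hat{\mathcal{A}}_1(z_1)\hat{\mathcal{A}}_2(z_2)f(\mathrm{i}z_1,\mathrm{i}z_2)$ over $(\mathbb{R}-\mathrm{i}t_1)\times(\mathbb{R}-\mathrm{i}t_2)$. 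This $K$ is analytic in each slot, hence continuous and jointly measurable on the product line, and hypothesis (5) is precisely the assertion that $|K|$ has finite integral over $(\mathbb{R}-\mathrm{i}t_1)\times(\mathbb{R}-\mathrm{i}t_2)$. The Fubini theorem then equates the two iterated integrals, which is the desired identity and shows that $f(A_1,A_2)$ is unambiguous.

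I do not expect a genuine obstacle here; the substance is bookkeeping, and the points needing care are three. First, one must check that the heights $t_1,t_2$ coming from (5) are simultaneously admissible as contour heights for every inner and outer evaluation that appears; this is immediate since (5) places them inside $I_i\cap(\alpha_i,\beta_i)$ and membership in $\mathcal{T}_{A_i}(I_i)$ supplies the integrability on those lines required by Lemma \ref{welldeflem}. Second, one must verify that each section $z_1\mapsto K(z_1,z_2)$ with $z_2\in\mathbb{R}-\mathrm{i}t_2$ (and symmetrically the other way) is genuinely Lebesgue integrable, so that the inner integral is an honest, absolutely convergent value of $\mathrm{eval}_{A_1}$ and not merely a section that Fubini controls for almost every $z_2$; this is exactly hypothesis (1) (resp.\ (3)), and is in any case part of the conclusion of the Tonelli half of Fubini applied to (5). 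Third, and this is the only mildly delicate point, hypotheses (1)--(4) play a double role: once to make the two iterated evaluations---and hence the very claim that ``$f(A_1,A_2)$ is well-defined''---meaningful, and again to identify those evaluations with the two orders of the Fubini iterated integral; keeping these two roles apart is essentially the whole proof, beyond which nothing deeper than Lemma \ref{welldeflem} and the classical Fubini theorem is used.
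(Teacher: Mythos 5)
Your proposal is correct and follows exactly the route the paper intends: the paper's entire proof is ``Apply the Fubini theorem,'' and your write-up simply makes explicit the expansion of both iterated evaluations into iterated integrals of the kernel $\hat{\mathcal{A}}_1(z_1)\hat{\mathcal{A}}_2(z_2)f(\mathrm{i}z_1,\mathrm{i}z_2)$, with hypothesis (5) supplying the absolute integrability and hypotheses (1)--(4) making each inner and outer evaluation meaningful. No discrepancy with the paper's argument.
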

\begin{proof}
	Apply the Fubini theorem.
\end{proof}
\begin{prop}
	\label{welldefplus}
	Suppose $A_1=(\mathcal{A}_1,\Omega_{a,b}),A_2=(\mathcal{A}_2,\Omega_{a,b})$ are regular umbrae with index $(\alpha_1,\beta_1),(\alpha_2,\beta_2)$.
	
	If $g(z_1,z_2)=f(z_1+z_2)$ satisfies the conditions of Proposition \ref{welldefeval}, and there is an open interval $I\subseteq I_1+I_2$ such that $f\in\mathcal{T}_{A_1+A_2}(I)$, then
	\begin{equation*}
		f(A_1+A_2)=g(A_1,A_2),
	\end{equation*}
	i.e. $f(A_1+A_2)$ is well-defined.
\end{prop}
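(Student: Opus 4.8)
The plan is to unwind $g(A_1,A_2)$ into an iterated contour integral, recast the inner operation as a convolution of $\hat{\mathcal{A}}_1$ with $\hat{\mathcal{A}}_2$, and then exploit the fact that the generalized Fourier transform of Definition \ref{genft} sends products to convolutions, so that this convolution is --- up to the constant fixed by the chosen Fourier normalization --- the generating function $\widehat{\mathcal{A}_1\mathcal{A}_2}$ of $A_1+A_2$. Since $g(z_1,z_2)=f(z_1+z_2)$ satisfies the hypotheses of Proposition \ref{welldefeval}, $g(A_1,A_2)$ is well-defined and equals the iterated evaluation; expanding Definition \ref{Evaldef} twice and using $g(\mathrm{i}z,\mathrm{i}w)=f(\mathrm{i}(z+w))$ gives
\begin{equation*}
g(A_1,A_2)=\int_{-\infty-\mathrm{i}t_1}^{\infty-\mathrm{i}t_1}\int_{-\infty-\mathrm{i}t_2}^{\infty-\mathrm{i}t_2}\hat{\mathcal{A}}_1(z)\hat{\mathcal{A}}_2(w)f(\mathrm{i}(z+w))\,\mathrm{d}w\,\mathrm{d}z
\end{equation*}
for some $t_1\in I_1\cap(\alpha_1,\beta_1)$ and $t_2\in I_2\cap(\alpha_2,\beta_2)$, the double integral converging absolutely by the last hypothesis of Proposition \ref{welldefeval}. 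The shear $(z,w)\mapsto(z,\zeta)=(z,z+w)$ is measure-preserving and carries this product of lines onto $(\mathbb{R}-\mathrm{i}t_1)\times(\mathbb{R}-\mathrm{i}(t_1+t_2))$, so Fubini applies once more and
\begin{equation*}
g(A_1,A_2)=\int_{-\infty-\mathrm{i}(t_1+t_2)}^{\infty-\mathrm{i}(t_1+t_2)}\left(\int_{-\infty-\mathrm{i}t_1}^{\infty-\mathrm{i}t_1}\hat{\mathcal{A}}_1(z)\hat{\mathcal{A}}_2(\zeta-z)\,\mathrm{d}z\right)f(\mathrm{i}\zeta)\,\mathrm{d}\zeta.
\end{equation*}

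The heart of the proof is the convolution identity
\begin{equation*}
\int_{-\infty-\mathrm{i}t_1}^{\infty-\mathrm{i}t_1}\hat{\mathcal{A}}_1(z)\hat{\mathcal{A}}_2(\zeta-z)\,\mathrm{d}z=c\,\widehat{\mathcal{A}_1\mathcal{A}_2}(\zeta),\qquad \zeta\in\mathbb{R}-\mathrm{i}(t_1+t_2),
\end{equation*}
where $c$ is the constant dictated by our Fourier convention. I would prove it by expanding $\hat{\mathcal{A}}_1(z)$ through Definition \ref{genft} as an integral of $\mathcal{A}_1(u)\mathrm{e}^{-\mathrm{i}zu}$ along a line in the dominating strip $\Omega_{a,b}$, substituting, interchanging the order of integration, substituting $y=\zeta-z$ in the resulting inner integral, recognizing $\int_{-\infty-\mathrm{i}t_2}^{\infty-\mathrm{i}t_2}\hat{\mathcal{A}}_2(y)\mathrm{e}^{\mathrm{i}yu}\,\mathrm{d}y$ as a multiple of $\mathcal{A}_2(u)$ by Fourier inversion, and collapsing what remains, $\int_{-\infty-\mathrm{i}\tau}^{\infty-\mathrm{i}\tau}\mathcal{A}_1(u)\mathcal{A}_2(u)\mathrm{e}^{-\mathrm{i}\zeta u}\,\mathrm{d}u$, into the asserted multiple of $\widehat{\mathcal{A}_1\mathcal{A}_2}(\zeta)$. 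Because neither $\hat{\mathcal{A}}_1$ nor $\hat{\mathcal{A}}_2$ need lie in $L^1$ of the lines in question, these interchanges and the inversion step are not literally valid as written; the clean remedy is to run the entire computation with a Gauss-Weierstrass factor $\mathrm{e}^{-\varepsilon z^2}$ inserted throughout, so that every integral is absolutely convergent, Fubini genuinely applies, and classical Fourier inversion holds, and then let $\varepsilon\to 0$ by dominated convergence exactly as in the proof of Lemma \ref{welldeflem} and the Gauss-Weierstrass summation method. In parallel one must check, using the index arithmetic (multiplying the two exponential bounds separately over $x\geq 0$ and over $x\leq 0$ shows the positive index of $A_1+A_2$ is at most $\alpha_1+\alpha_2$ and its negative index at least $\beta_1+\beta_2$, so $A_1+A_2$ is regular and $t_1+t_2$ lies in its regular interval) together with Lemma \ref{L1-PL}, that all contours arising in the regularized computation are admissible and that the limit is contour-independent. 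This middle step --- calibrating the $\varepsilon$-regularization so that both Fubini and the passage to the limit are honest, while keeping track of which horizontal strips contain which contours --- is where I expect the real work to lie; everything around it is bookkeeping.

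Substituting the identity back into the second display gives
\begin{equation*}
g(A_1,A_2)=c\int_{-\infty-\mathrm{i}(t_1+t_2)}^{\infty-\mathrm{i}(t_1+t_2)}\widehat{\mathcal{A}_1\mathcal{A}_2}(\zeta)f(\mathrm{i}\zeta)\,\mathrm{d}\zeta.
\end{equation*}
Writing $(\alpha,\beta)$ for the index of $A_1+A_2$, the hypothesis $f\in\mathcal{T}_{A_1+A_2}(I)$ with $I\subseteq I_1+I_2$ makes the integrand $\widehat{\mathcal{A}_1\mathcal{A}_2}(z)f(\mathrm{i}z)$ analytic of exponential type in a strip and Lebesgue integrable both on $\mathbb{R}-\mathrm{i}(t_1+t_2)$ (by the convergence just obtained) and on every line $\mathbb{R}-\mathrm{i}t$ with $t$ in the nonempty set $I\cap(\alpha,\beta)$; since $t_1+t_2$ and any such $t$ lie in the common strip of analyticity, Lemma \ref{welldeflem} lets us move the contour from one to the other. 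Hence the displayed integral is $c$ times the defining integral of $f(A_1+A_2)$ from Definition \ref{Evaldef}, and reconciling $c$ with the normalization of $\mathrm{eval}$ gives $g(A_1,A_2)=f(A_1+A_2)$, which is exactly the claimed well-definedness of $f(A_1+A_2)$.
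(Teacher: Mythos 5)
Your proposal is correct and its skeleton is the same as the paper's: expand $g(A_1,A_2)$ as an absolutely convergent double integral via Proposition \ref{welldefeval}, shear to the variable $z_0=z_1+z_2$, identify the inner integral as a convolution equal (up to normalization) to $\widehat{\mathcal{A}_1\mathcal{A}_2}$ on the line at height $t_1+t_2$, and then move the contour to a height in $I\cap(\alpha,\beta)$ by Lemma \ref{welldeflem}. The one place you genuinely diverge is the middle step you flag as ``the real work'': you justify the convolution identity by a Gauss--Weierstrass regularization, classical Fourier inversion, and an $\varepsilon\to 0$ limit, whereas the paper needs no limiting process at all. It inserts the mutually cancelling weights $\mathrm{e}^{s(z_0-z_2)}$, $\mathrm{e}^{sz_2}$, $\mathrm{e}^{-sz_0}$ with $s$ in the common dominating interval $(a,b)$; by Theorem \ref{corft} the regular interval of each $\hat{A}_i$ contains $(-b,-a)$, so the weighted transforms $\mathrm{e}^{s\xi}\hat{\mathcal{A}}_i(\xi-\mathrm{i}t_i)$ decay exponentially in both directions, the convolution is an honest $L^1$ convolution, and the ordinary convolution theorem identifies $\mathrm{e}^{-sz_0}\bigl(\mathrm{e}^{s(\cdot)}\hat{\mathcal{A}}_1\ast\mathrm{e}^{s(\cdot)}\hat{\mathcal{A}}_2\bigr)(z_0)$ with the generalized transform $\widehat{\mathcal{A}_1\mathcal{A}_2}(z_0)$ of Definition \ref{genft} on that line. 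So your worry that $\hat{\mathcal{A}}_1,\hat{\mathcal{A}}_2$ may fail to be integrable individually is legitimate, but the cure is this complementary exponential decay (which already makes the unweighted convolution absolutely convergent), not a summability method; your regularized computation would also go through, it is simply longer and re-proves what Definition \ref{genft} and Theorem \ref{corft} have packaged. Your index bookkeeping ($\alpha\leq\alpha_1+\alpha_2$, $\beta\geq\beta_1+\beta_2$, hence $t_1+t_2$ lies in the regular interval of $A_1+A_2$) and your final appeal to Lemma \ref{welldeflem}, using exponential type of $f$ on $\{z:\mathrm{Re}\,z\in I_1+I_2\}$ and integrability on both lines, spell out what the paper's terse closing sentence leaves implicit.
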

\begin{proof}
	We assume that $s\in(a,b)$. First notice that
	\begin{align*}
		g(A_1,A_2)&=\frac{1}{2\pi}
		\int_{-\infty-\mathrm{i}t_1}^{\infty-\mathrm{i}t_1}
		\int_{-\infty-\mathrm{i}t_2}^{\infty-\mathrm{i}t_2}
		\hat{\mathcal{A}}_1(z_1)\hat{\mathcal{A}}_2(z_2)
		f(\mathrm{i}z_1+\mathrm{i}z_2)\mathrm{d}z_2\mathrm{d}z_1\\
		&=\frac{1}{2\pi}
		\int_{-\infty-\mathrm{i}(t_1+t_2)}^{\infty-\mathrm{i}(t_1+t_2)}
		\int_{-\infty-\mathrm{i}t_2}^{\infty-\mathrm{i}t_2}
		\mathrm{e}^{s(z_0-z_2)}\hat{\mathcal{A}}_1(z_0-z_2)
		\mathrm{e}^{sz_2}\hat{\mathcal{A}}_2(z_2)
		\mathrm{e}^{-sz_0}f(\mathrm{i}z_0)\mathrm{d}z_2\mathrm{d}z_0\\
		&=\frac{1}{\sqrt{2\pi}}
		\int_{-\infty-\mathrm{i}(t_1+t_2)}^{\infty-\mathrm{i}(t_1+t_2)}
		\mathrm{e}^{-sz_0}
		(\mathrm{e}^{s(\cdot)}\hat{\mathcal{A}}_1
		\ast\mathrm{e}^{s(\cdot)}\hat{\mathcal{A}}_2)(z_0)
		f(\mathrm{i}z_0)\mathrm{d}z_0,
	\end{align*}
	and then notice that $f$ is an exponential type analytic function defined on $\{z\in\mathbb{C}:\mathrm{Re} z\in I_1+I_2\}$, so the conclusion followed by Lemma \ref{welldeflem}.
\end{proof}
\begin{prop}
	Suppose $A=(\mathcal{A},\Omega_{a,b})$ is a regular umbrae with index $(\alpha,\beta)$, $r\in\mathbb{R}$. 
	
	If $g(z)=f(rz)$, then we have
	\begin{equation*}
		f\in\mathcal{T}_{rA}(I)\Leftrightarrow
		g\in\mathcal{T}_{A}(r^{-1}I),
	\end{equation*}
	meanwhile $g(A)=f(rA)$ i.e. $f(rA)$ is well-defined.
\end{prop}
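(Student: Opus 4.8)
The plan is to reduce the whole statement to two scaling identities — one for the generating function under $\hat{\phantom{A}}$ and one for the defining contour integral of $\mathrm{eval}$ — and then push the substitution $z = rw$ through each of the three conditions defining $\mathcal{T}$ and through Definition \ref{Evaldef}. Throughout we may assume $r\neq 0$, since $r^{-1}\Omega_{a,b}$ is only meaningful then.

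First I would record the index of $rA$. Writing $z = x-\mathrm{i}t$, so that $rz = rx-\mathrm{i}rt$, the bound $|\mathcal{A}(x-\mathrm{i}t)|\le C(t)\mathrm{e}^{s|x|}$ on $\Omega_{a,b}$ becomes a bound of the same form for $z\mapsto\mathcal{A}(rz)$ on $r^{-1}\Omega_{a,b}$; inspecting the one-sided estimates shows that the positive and negative indices of $rA$ are $r\alpha,r\beta$ when $r>0$ and $r\beta,r\alpha$ when $r<0$. In both cases the regular interval of $rA$ is the open interval with endpoints $r\alpha,r\beta$, so $rA$ is again regular, and its dominating interval is $r^{-1}(a,b)$ (the interval $(a/r,b/r)$ or $(b/r,a/r)$ according to the sign of $r$). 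Next, substituting $u=rw$ in Definition \ref{genft}, and reversing the orientation of the horizontal contour when $r<0$, gives the Fourier scaling identity $\widehat{\mathcal{A}(r\,\cdot\,)}(z)=|r|^{-1}\hat{\mathcal{A}}(z/r)$; this is licit because $\mathcal{A}$ is exponential type, so Theorem \ref{corft} and Lemma \ref{welldeflem} apply on each admissible line.

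With these in hand, the equivalence $f\in\mathcal{T}_{rA}(I)\Leftrightarrow g\in\mathcal{T}_A(r^{-1}I)$ is checked condition by condition. The nonemptiness conditions match because $t\mapsto t/r$ is a bijection carrying $I\cap(\text{regular interval of }rA)$ onto $r^{-1}I\cap(\alpha,\beta)$. The analyticity/exponential-type condition matches because $z\mapsto rz$ maps the strip $\{\operatorname{Re}z\in r^{-1}I\}$ biholomorphically onto $\{\operatorname{Re}z\in I\}$ and multiplies exponential type by $|r|$, and $g=f(r\,\cdot\,)$. The integrability condition matches because, for $t$ in $I$ intersected with the regular interval of $rA$, the substitution $z=rw$ turns
\[
\int_{\mathbb{R}-\mathrm{i}t}\bigl|\widehat{\mathcal{A}(r\,\cdot\,)}(z)\,f(\mathrm{i}z)\bigr|\,|\mathrm{d}z|
=|r|^{-1}\!\int_{\mathbb{R}-\mathrm{i}t}\bigl|\hat{\mathcal{A}}(z/r)\,f(\mathrm{i}z)\bigr|\,|\mathrm{d}z|
=\int_{\mathbb{R}-\mathrm{i}(t/r)}\bigl|\hat{\mathcal{A}}(w)\,g(\mathrm{i}w)\bigr|\,|\mathrm{d}w|,
\]
where the factor $|r|^{-1}$ cancels $|\mathrm{d}z|=|r|\,|\mathrm{d}w|$, $g(\mathrm{i}w)=f(\mathrm{i}rw)$, orientation is irrelevant for absolute integrability, and $t/r$ ranges exactly over $r^{-1}I\cap(\alpha,\beta)$. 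Finally, for the evaluation identity I would fix an admissible $t$ and run the same substitution $z=rw$ in Definition \ref{Evaldef}; now the factor $r$ from $\mathrm{d}z=r\,\mathrm{d}w$ and the factor $\operatorname{sgn}r$ from the (possible) orientation reversal of the contour together turn $|r|^{-1}$ into $+1$, so
\[
f(rA)=\int_{-\infty-\mathrm{i}t}^{\infty-\mathrm{i}t}|r|^{-1}\hat{\mathcal{A}}(z/r)\,f(\mathrm{i}z)\,\mathrm{d}z
=\int_{-\infty-\mathrm{i}(t/r)}^{\infty-\mathrm{i}(t/r)}\hat{\mathcal{A}}(w)\,g(\mathrm{i}w)\,\mathrm{d}w = g(A),
\]
the last equality using Lemma \ref{welldeflem} to move the line $\mathbb{R}-\mathrm{i}(t/r)\subseteq r^{-1}I\cap(\alpha,\beta)$ to any line used to compute $g(A)$.

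I expect the only real friction to be the sign and orientation bookkeeping when $r<0$: one must keep straight that $z\mapsto z/r$ sends $\mathbb{R}-\mathrm{i}t$ to $\mathbb{R}-\mathrm{i}(t/r)$ with reversed orientation, that $r^{-1}\Omega_{a,b}=\Omega_{b/r,a/r}$, and that the regular interval swaps its endpoints — and then to notice that all of these reversals occur in cancelling pairs, so the conclusion holds uniformly in the sign of $r$. Everything else is a routine change of variables justified by the exponential-type hypothesis together with Lemma \ref{welldeflem}.
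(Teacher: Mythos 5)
Your proposal is correct; the paper in fact states this proposition without any proof, treating it as a routine verification, and your change-of-variables argument is exactly the intended one, with the essential bookkeeping done right: the factor $|r|^{-1}$ in $\widehat{\mathcal{A}(r\,\cdot\,)}(z)=|r|^{-1}\hat{\mathcal{A}}(z/r)$ cancels against the Jacobian $\mathrm{d}z=r\,\mathrm{d}w$ together with the orientation reversal of the contour when $r<0$, while the index of $rA$ and the intervals $I$, $r^{-1}I$ transform compatibly so that both sides of the equivalence and of $f(rA)=g(A)$ use the same admissible lines. Nothing further is needed beyond your (correctly noted) restriction to $r\neq 0$, which is implicit in the paper's definition of $rA$ and in the expression $r^{-1}I$.
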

Finally, we will be concerned about the well-definedness related to some special singular umbrae, such as $(c)$ and $\mathrm{D}$. We hope they act like a constant and derivative.
\begin{prop}
	Suppose $A=(\mathcal{A},\Omega_{a,b})$ is a regular umbrae with index $(\alpha,\beta)$, $c\in\mathbb{C}$. 
	
	If $g(z)=f(z+c)$, then we have
	\begin{equation*}
		f\in\mathcal{T}_{A+(c)}(I)\Leftrightarrow
		g\in\mathcal{T}_{A}(I-c),
	\end{equation*}
	meanwhile $g(A)=f(A+(c))$ i.e. $f(A+c)$ is well-defined.
\end{prop}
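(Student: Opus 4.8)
The plan is to reduce everything to the classical duality ``multiplication by an exponential on the generating side $\leftrightarrow$ translation of the argument on the Fourier side'', adapted to horizontal-strip contours. Write $c = c_1 + \mathrm{i}c_2$ with $c_1 = \mathrm{Re}\,c$ and $c_2 = \mathrm{Im}\,c$; the interval ``$I - c$'' appearing in the statement must be read as $I - c_1$, since $g(z) = f(z+c)$ is analytic exactly on $\{z : \mathrm{Re}\,z \in I - c_1\}$, and this harmless abuse of notation is the only place where the statement is not perfectly literal.

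First I would unwind $A + (c)$. By definition this is the umbrae with generating function $\mathcal{B} \coloneqq \mathcal{A}\cdot\mathrm{e}^{c(\cdot)}$ and dominating interval $\Omega_{a,b}$, and $\mathcal{B}$ is indeed exponential type; moreover from
\begin{equation*}
	\bigl|\mathcal{A}(x-\mathrm{i}t)\,\mathrm{e}^{c(x-\mathrm{i}t)}\bigr| = \bigl|\mathcal{A}(x-\mathrm{i}t)\bigr|\,\mathrm{e}^{c_1 x}\,\mathrm{e}^{c_2 t}
\end{equation*}
one reads off that the one-sided growth exponents of $\mathcal{A}$ are each shifted by exactly $c_1$, so $A + (c)$ is again a regular umbrae, now with index $(\alpha + c_1, \beta + c_1)$; in particular $\mathcal{T}_{A+(c)}(I)$ is non-vacuous iff $I \cap (\alpha + c_1, \beta + c_1) \neq \varnothing$, iff $(I - c_1)\cap(\alpha,\beta) \neq \varnothing$. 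By Theorem~\ref{corft}, $\hat{\mathcal{A}}$ is analytic on $\Omega_{\alpha,\beta}$ and $\hat{\mathcal{B}}$ is analytic on $\Omega_{\alpha + c_1, \beta + c_1}$, which the translation $w \mapsto w + \mathrm{i}c$ carries onto $\Omega_{\alpha,\beta}$ (it raises the imaginary part by $c_1$). I would then record the identity
\begin{equation*}
	\hat{\mathcal{B}}(w) = \hat{\mathcal{A}}(w + \mathrm{i}c), \qquad w \in \Omega_{\alpha + c_1,\beta + c_1},
\end{equation*}
which is immediate: $A$ and $A + (c)$ have the same dominating interval $(a,b)$, so by Definition~\ref{genft} both transforms may be computed over one and the same contour $\mathbb{R} - \mathrm{i}t'$ with $t' \in (a,b)$, and there the integrand $\mathcal{A}(w')\,\mathrm{e}^{cw'}\,\mathrm{e}^{-\mathrm{i}ww'}$ defining $\hat{\mathcal{B}}(w)$ coincides with the integrand $\mathcal{A}(w')\,\mathrm{e}^{-\mathrm{i}(w+\mathrm{i}c)w'}$ defining $\hat{\mathcal{A}}(w+\mathrm{i}c)$; Lemma~\ref{welldeflem} guarantees the common value does not depend on $t'$.

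With this identity in hand, the rest is bookkeeping carried out by a single change of variables. Fix $t \in I \cap (\alpha + c_1, \beta + c_1)$ and set $\tau \coloneqq t - c_1 \in (I - c_1)\cap(\alpha,\beta)$. The substitution $u = z + \mathrm{i}c$ has $\mathrm{d}u = \mathrm{d}z$ and carries the contour $\mathbb{R} - \mathrm{i}t$ onto $\mathbb{R} - \mathrm{i}\tau$, and, using the identity above and $f(\mathrm{i}z) = f(\mathrm{i}u + c) = g(\mathrm{i}u)$, it turns the integrand $\hat{\mathcal{B}}(z)f(\mathrm{i}z)$ into $\hat{\mathcal{A}}(u)g(\mathrm{i}u)$. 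Passing to absolute values and writing the line integrals as Lebesgue integrals over $\mathbb{R}$ --- the horizontal shift by $c_2$ hidden inside $u$ being invisible to Lebesgue measure on $\mathbb{R}$ --- shows that the integrability clause of ``$f \in \mathcal{T}_{A+(c)}(I)$'' is equivalent to that of ``$g \in \mathcal{T}_A(I - c_1)$''; combined with the matching of the analyticity clauses ($g$ is exponential type on $\{\mathrm{Re}\,z \in I - c_1\}$ iff $f$ is exponential type on $\{\mathrm{Re}\,z \in I\}$) and of the non-emptiness clauses noted above, this gives the equivalence $f \in \mathcal{T}_{A+(c)}(I) \Leftrightarrow g \in \mathcal{T}_A(I - c_1)$. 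Applying the same substitution to the values rather than the moduli gives
\begin{equation*}
	f(A + (c)) = \int_{-\infty - \mathrm{i}t}^{\infty - \mathrm{i}t}\hat{\mathcal{B}}(z)f(\mathrm{i}z)\,\mathrm{d}z = \int_{-\infty - \mathrm{i}\tau}^{\infty - \mathrm{i}\tau}\hat{\mathcal{A}}(u)g(\mathrm{i}u)\,\mathrm{d}u = g(A),
\end{equation*}
so $f(A + c)$ is well-defined.

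The only genuinely substantive step --- and the one I would present with full care --- is the index computation for $A + (c)$: the one-line estimate showing both growth exponents shift by exactly $\mathrm{Re}\,c$ while the dominating interval is left intact. That is precisely what makes $\hat{\mathcal{B}}$ an honest analytic function on a strip and lands the contour heights $t$ and $\tau = t - \mathrm{Re}\,c$ in the correct regular intervals; everything downstream is the standard translation/modulation duality of the Fourier transform together with the triviality that a horizontal translation does not change an integral along a horizontal line. The one convention worth flagging explicitly to the reader is that ``$I - c$'' means $I - \mathrm{Re}\,c$.
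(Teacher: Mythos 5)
Your proof is correct; in fact the paper states this proposition with no proof at all, treating it as immediate from the definitions. Your route --- the index shift $(\alpha+\mathrm{Re}\,c,\,\beta+\mathrm{Re}\,c)$ for $A+(c)$, the modulation--translation identity $\widehat{\mathcal{A}\mathrm{e}^{c(\cdot)}}(w)=\hat{\mathcal{A}}(w+\mathrm{i}c)$ coming from Definition \ref{genft} together with Lemma \ref{welldeflem}, and the substitution $u=z+\mathrm{i}c$ in the evaluation integral of Definition \ref{Evaldef} --- is exactly the computation the paper implicitly relies on, and your reading of $I-c$ as $I-\mathrm{Re}\,c$ is the only sensible interpretation of the statement.
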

Unfortunately, the derivative calculus is not completely well-defined. However, understanding this counterexample can also increase our understanding of umbrae calculus. Consider $f(z)=1, A=(z^{-1}\mathrm{e}^{-z^2},\Omega_{0,\infty})$, we have $f\in\mathcal{T}_{A+\mathrm{D}}(\mathbb{R}), f'\in\mathcal{T}_{A}(\mathbb{R})$, but $f(A+\mathrm{D})=1, f'(A)=0$.
\begin{them}
	\label{intercderi}
	Suppose $A=(\mathcal{A},\Omega_{a,b})$ is a regular umbrae with index $(\alpha,\beta)$. 
	
	If $\int_{-\infty-\mathrm{i}t}^{\infty-\mathrm{i}t}
	|\hat{\mathcal{A}}(z)f(\mathrm{i}z+z_0)|\mathrm{d}z$ is uniformly bounded for sufficiently small $z_0$, and $f(\cdot+z_0)\in\mathcal{T}_A(I), t\in I$, then we have
	\begin{enumerate}
		\item $f'\in \mathcal{T}_A(I)\Rightarrow 
		f'(A)=\frac{\mathrm{d}}{\mathrm{d}z_0}f(A+z_0)(0)$;
		\item $f\in \mathcal{T}_{A+\mathrm{D}}(I)\Rightarrow 
		f(A+\mathrm{D})=\frac{\mathrm{d}}{\mathrm{d}z_0}f(A+z_0)(0)$,
	\end{enumerate}
	i.e. $f(A+\mathrm{D})$ is well-defined.
\end{them}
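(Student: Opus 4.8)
The plan is to work with the one-variable function $\Phi(z_0)\coloneqq f(A+z_0)=\int_{\mathbb{R}-\mathrm{i}t}\hat{\mathcal{A}}(z)\,f(\mathrm{i}z+z_0)\,\mathrm{d}z$ (the integral being the representation of $f\big(A+(z_0)\big)$ recorded in the proposition on $f(A+c)$, the umbrae $(z_0)$ having generating function $\mathrm{e}^{z_0 z}$), and to establish the ``Gauss--Weierstrass master identity''
\[
\Phi'(0)=\lim_{\varepsilon\to 0^+}\int_{\mathbb{R}-\mathrm{i}t}\mathrm{e}^{-\varepsilon z^2}\hat{\mathcal{A}}(z)\,f'(\mathrm{i}z)\,\mathrm{d}z .
\]
Once this is available, part (1) follows by recognising the right-hand side as $f'(A)$, and part (2) by recognising it as $f(A+\mathrm{D})$; thus both parts are reduced to reading off the same limit in two ways.

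First I would check that $\Phi'(0)$ even makes sense. By hypothesis there is a disk $D_r=\{|z_0|<r\}$ on which $f(\cdot+z_0)\in\mathcal{T}_A(I)$ and $\int_{\mathbb{R}-\mathrm{i}t}|\hat{\mathcal{A}}(z)f(\mathrm{i}z+z_0)|\,\mathrm{d}z\le M$; since $z_0\mapsto f(\mathrm{i}z+z_0)$ is holomorphic, Fubini's theorem (legitimate by the uniform bound $M$ together with compactness of a triangle in $D_r$) and Morera's theorem show $\Phi$ is holomorphic on $D_r$ with $|\Phi|\le M$ there. For the master identity, set $\Phi_\varepsilon(z_0)\coloneqq\int_{\mathbb{R}-\mathrm{i}t}\mathrm{e}^{-\varepsilon z^2}\hat{\mathcal{A}}(z)f(\mathrm{i}z+z_0)\,\mathrm{d}z$. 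The Gaussian factor makes $\mathrm{e}^{-\varepsilon z^2}\hat{\mathcal{A}}(z)$ Schwartz-decaying along $\mathbb{R}-\mathrm{i}t$, so $\Phi_\varepsilon$ is holomorphic on $D_r$ and may be differentiated under the integral sign, giving $\Phi_\varepsilon'(0)=\int_{\mathbb{R}-\mathrm{i}t}\mathrm{e}^{-\varepsilon z^2}\hat{\mathcal{A}}(z)f'(\mathrm{i}z)\,\mathrm{d}z$. The Gauss--Weierstrass summation method (applicable since $f(\cdot+z_0)\in\mathcal{T}_A(I)$) yields $\Phi_\varepsilon(z_0)\to\Phi(z_0)$ pointwise on $D_r$, while $|\Phi_\varepsilon|\le\mathrm{e}^{\varepsilon t^2}M\le 2M$ on $D_r$ for small $\varepsilon$; Vitali's convergence theorem then promotes this to locally uniform convergence, whence $\Phi_\varepsilon'(0)\to\Phi'(0)$, which is the asserted identity.

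For part (1), if moreover $f'\in\mathcal{T}_A(I)$ (note $f'$ is again of exponential type on $\{\mathrm{Re}\,z\in I\}$ by Cauchy estimates), the Gauss--Weierstrass summation method applied to $f'$ gives $\int_{\mathbb{R}-\mathrm{i}t}\mathrm{e}^{-\varepsilon z^2}\hat{\mathcal{A}}(z)f'(\mathrm{i}z)\,\mathrm{d}z\to f'(A)$, so $f'(A)=\Phi'(0)$. For part (2), the generating function of $A+\mathrm{D}$ is $z\mathcal{A}(z)$, and the standard Fourier rule on the shifted line gives $\widehat{z\mathcal{A}}=\mathrm{i}\,\hat{\mathcal{A}}'$, so that $f(A+\mathrm{D})=\int_{\mathbb{R}-\mathrm{i}t}\mathrm{i}\,\hat{\mathcal{A}}'(z)f(\mathrm{i}z)\,\mathrm{d}z$ when $f\in\mathcal{T}_{A+\mathrm{D}}(I)$. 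To reach this, rewrite $f'(\mathrm{i}z)=-\mathrm{i}\frac{\mathrm{d}}{\mathrm{d}z}f(\mathrm{i}z)$ in $\Phi_\varepsilon'(0)$ and integrate by parts; the boundary contribution $\mathrm{e}^{-\varepsilon z^2}\hat{\mathcal{A}}(z)f(\mathrm{i}z)$ vanishes at $\pm\infty-\mathrm{i}t$ thanks to Gaussian decay (both $\hat{\mathcal{A}}$ and $f(\mathrm{i}\,\cdot)$ being of exponential type). This produces $\Phi_\varepsilon'(0)=\mathrm{i}\int_{\mathbb{R}-\mathrm{i}t}\mathrm{e}^{-\varepsilon z^2}\hat{\mathcal{A}}'(z)f(\mathrm{i}z)\,\mathrm{d}z-2\mathrm{i}\varepsilon\int_{\mathbb{R}-\mathrm{i}t}z\,\mathrm{e}^{-\varepsilon z^2}\hat{\mathcal{A}}(z)f(\mathrm{i}z)\,\mathrm{d}z$. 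Letting $\varepsilon\to0$: the first integral tends to $\mathrm{i}\int_{\mathbb{R}-\mathrm{i}t}\hat{\mathcal{A}}'(z)f(\mathrm{i}z)\,\mathrm{d}z=f(A+\mathrm{D})$ by dominated convergence, since $\hat{\mathcal{A}}'f(\mathrm{i}\,\cdot)=-\mathrm{i}\,\widehat{z\mathcal{A}}\,f(\mathrm{i}\,\cdot)$ is Lebesgue integrable on $\mathbb{R}-\mathrm{i}t$ by $f\in\mathcal{T}_{A+\mathrm{D}}(I)$; and the error term is bounded by $\big(\sup_{z\in\mathbb{R}-\mathrm{i}t}|2\varepsilon z\,\mathrm{e}^{-\varepsilon z^2}|\big)\big\|\hat{\mathcal{A}}f(\mathrm{i}\,\cdot)\big\|_1$, where the supremum is $O(\sqrt{\varepsilon})\to0$ and $\big\|\hat{\mathcal{A}}f(\mathrm{i}\,\cdot)\big\|_1<\infty$ because $f=f(\cdot+0)\in\mathcal{T}_A(I)$. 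Comparing with the master identity gives $f(A+\mathrm{D})=\Phi'(0)$.

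I expect the crux to be the master identity, precisely the fact that one cannot simply differentiate under the integral in $\Phi(z_0)=\int_{\mathbb{R}-\mathrm{i}t}\hat{\mathcal{A}}(z)f(\mathrm{i}z+z_0)\,\mathrm{d}z$: since $\hat{\mathcal{A}}(z)$ can decay much faster than $f(\mathrm{i}z+z_0)$ grows — exactly the behaviour of the failure example $f\equiv1$, $\hat{\mathcal{A}}(z)\sim\mathrm{e}^{-z^2/4}$ given just before the theorem — there is no $z_0$-uniform $L^1$ majorant for $\partial_{z_0}$ of the integrand. This is why the Gauss--Weierstrass cutoff plus the Vitali detour is needed, and it is the one place where the uniform integrability hypothesis is genuinely used, namely to make $\Phi$ and the whole family $(\Phi_\varepsilon)_{\varepsilon>0}$ locally bounded and holomorphic on $D_r$. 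The remaining ingredients — the identity $\widehat{z\mathcal{A}}=\mathrm{i}\,\hat{\mathcal{A}}'$ and the vanishing of the boundary terms in the integration by parts — are routine once the exponential-type bounds are combined with the Gaussian weight.
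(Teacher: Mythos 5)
Your argument is correct, and its core mechanism is the same as the paper's: regularize with a Gaussian, $\Phi_\varepsilon(z_0)=\int_{\mathbb{R}-\mathrm{i}t}\mathrm{e}^{-\varepsilon z^2}\hat{\mathcal{A}}(z)f(\mathrm{i}z+z_0)\,\mathrm{d}z$, use the uniform-boundedness hypothesis to make $(\Phi_\varepsilon)$ a bounded family of holomorphic functions of $z_0$, upgrade the pointwise (Gauss--Weierstrass) convergence $\Phi_\varepsilon\to f(A+\cdot)$ to convergence of derivatives at $0$ by a normal-family theorem, and then identify $\lim_{\varepsilon\to0}\Phi'_\varepsilon(0)$; you invoke Vitali where the paper invokes Montel (Vitali gives the full limit, Montel only a subsequence, but a subsequence suffices since the other limit exists), so part (1) is essentially the paper's proof. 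The genuine divergence is in part (2): the paper replaces the Gaussian by $\mathrm{e}^{-\varepsilon(z-\mathrm{i}z_0)^2}$ and shifts the contour so that the $z_0$-dependence sits entirely in $\hat{\mathcal{A}}(z+\mathrm{i}z_0)$, whence the Leibniz rule puts the derivative directly on $\hat{\mathcal{A}}$ and Gauss--Weierstrass summation for $A+\mathrm{D}$ (via $\widehat{z\mathcal{A}}=\mathrm{i}\hat{\mathcal{A}}'$) finishes with no error term; you instead keep the fixed Gaussian and integrate by parts, obtaining the same main term $\mathrm{i}\int\mathrm{e}^{-\varepsilon z^2}\hat{\mathcal{A}}'(z)f(\mathrm{i}z)\,\mathrm{d}z$ plus the extra term $-2\mathrm{i}\varepsilon\int z\,\mathrm{e}^{-\varepsilon z^2}\hat{\mathcal{A}}(z)f(\mathrm{i}z)\,\mathrm{d}z$, which you correctly kill with the $O(\sqrt{\varepsilon})$ sup bound against $\Vert\hat{\mathcal{A}}f(\mathrm{i}\cdot)\Vert_1$, concluding by dominated convergence from the $\mathcal{T}_{A+\mathrm{D}}(I)$ integrability. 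The paper's shifted-Gaussian device buys a cleaner limit with no remainder (at the cost of a contour-moving step of the type justified by Lemma \ref{welldeflem}), while your integration by parts stays on one fixed line and makes completely explicit where the hypothesis $f\in\mathcal{T}_{A+\mathrm{D}}(I)$, i.e.\ integrability of $\mathrm{i}\hat{\mathcal{A}}'(z)f(\mathrm{i}z)$, is used; your closing remark on why naive differentiation under the integral fails matches the counterexample the paper places just before the theorem.
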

\begin{proof}
	Suppose that $f'\in \mathcal{T}_A(I)$. For
	\begin{equation*}
		g_{\varepsilon}(z_0)=
		\frac{1}{\sqrt{2\pi}}
		\int_{-\infty-\mathrm{i}t}^{\infty-\mathrm{i}t}
		\mathrm{e}^{-\varepsilon z^2}
		\hat{\mathcal{A}}(z)f(\mathrm{i}z+z_0)\mathrm{d}z,
	\end{equation*}
	we have
	\begin{enumerate}
		\item $\lim\limits_{\varepsilon\to 0}g_{\varepsilon}(z_0)=f(A+z_0)$;
		\item Sequence $(g_\varepsilon)_{\varepsilon>0}$ is uniformly bounded.
	\end{enumerate}
	By the Montel's theorem, there exists a subsequence $(g_{\varepsilon_n})_{n\in\mathbb{N}}$ locally uniformly converges to $f(A+z_0)$. $f'$ is again an exponential type analytic function, by the Leibniz integral rule we have
	\begin{equation*}
		g'_{\varepsilon}(z_0)=
		\frac{1}{\sqrt{2\pi}}
		\int_{-\infty-\mathrm{i}t}^{\infty-\mathrm{i}t}
		\mathrm{e}^{-\varepsilon z^2}
		\hat{\mathcal{A}}(z)f'(\mathrm{i}z+z_0)\mathrm{d}z.
	\end{equation*}
	Because $g'_{\varepsilon}(0)\to f'(A), g'_{\varepsilon_n}(0)\to \frac{\mathrm{d}}{\mathrm{d}z_0}f(A+z_0)(0).$
	
	Suppose that $f\in \mathcal{T}_{A+\mathrm{D}}(I)$, and $t+z_0\in I$. Only need to notice that
	\begin{align*}
		\sqrt{2\pi}
		\tilde{g}_{\varepsilon}(z_0)&=
		\int_{-\infty-\mathrm{i}t}^{\infty-\mathrm{i}t}
		\mathrm{e}^{-\varepsilon (z-\mathrm{i}z_0)^2}
		\hat{\mathcal{A}}(z)f(\mathrm{i}z+z_0)\mathrm{d}z\\
		&=
		\int_{-\infty-\mathrm{i}(t+z_0)}^{\infty-\mathrm{i}(t+z_0)}
		\mathrm{e}^{-\varepsilon z^2}
		\hat{\mathcal{A}}(z+\mathrm{i}z_0)f(\mathrm{i}z)\mathrm{d}z\\
		&=
		\int_{-\infty-\mathrm{i}t}^{\infty-\mathrm{i}t}
		\mathrm{e}^{-\varepsilon z^2}
		\hat{\mathcal{A}}(z+\mathrm{i}z_0)f(\mathrm{i}z)\mathrm{d}z,
	\end{align*}
	and we still have
	\begin{enumerate}
		\item $\lim\limits_{\varepsilon\to 0}\tilde{g}_{\varepsilon}(z_0)=f(A+z_0)$;
		\item Sequence $(\tilde{g}_\varepsilon)_{\varepsilon>0}$ is uniformly bounded.
	\end{enumerate}
	By the Leibniz integral rule again, we have
	\begin{equation*}
		\tilde{g}'_{\varepsilon}(z_0)=
		\frac{1}{\sqrt{2\pi}}
		\int_{-\infty-\mathrm{i}t}^{\infty-\mathrm{i}t}
		\mathrm{e}^{-\varepsilon z^2}
		\mathrm{i}\hat{\mathcal{A}}'(z+\mathrm{i}z_0)
		f(\mathrm{i}z)\mathrm{d}z,
	\end{equation*}
	this completes the proof.
\end{proof}
\noindent
This method seems to be able to replace $\mathrm{D}$ with an umbrae whose dominating interval is also $\mathbb{R}$. In fact, Theorem \ref{naturalthem} will strengthen the condition to prove more general cases.

\subsection{Specific calculation}

After establishing the required well-definedness, we can see how various conclusions emerge naturally.
\begin{them}
	\label{naturalthem}
	Suppose $(f(\mathrm{i}z),\Omega_{\beta,\alpha})$ is a umbrae with index $(b,a)$.

	If $A_k=(\mathcal{A}_k,\Omega_{a_k,b_k})$ is a umbrae with index $(\alpha_k, \beta_k)$, satisfied
	\begin{enumerate}
		\item $a_k<a$; $b<b_k$;
		\item $\beta_k-\alpha_k>-(\alpha-\beta)$,
	\end{enumerate}
	then we have
	\begin{enumerate}
		\item $(f(A_k+\mathrm{i}z),\Omega_{\beta-\beta_k,\alpha-\alpha_k})$ is a umbrae with index at least $(b\vee a_k,a\wedge b_k)$;
		\item If $(a_k,b_k)\cap(a_j,b_j)\neq\varnothing$, and $(\beta_k-\alpha_k)+(\beta_j-\alpha_j)>-(\alpha-\beta)$ then $f(A_k+A_j+\mathrm{i}z)$ is well-defined in the sense of Definition \ref{welldefplus};
		\item If $(a_k,b_k)\cap(a_j,b_j)\neq\varnothing$, and $(\beta-\beta_k,\alpha-\alpha_k)\cap(\beta-\beta_j,\alpha-\alpha_j)\neq\varnothing$ then $f((A_k[+]A_j)+\mathrm{i}z)$ is well-defined, satisfied
		\begin{equation}
			f((A_k[+]A_j)+\mathrm{i}z)=
			f(A_k+\mathrm{i}z)+f(A_j+\mathrm{i}z).
		\end{equation}
	\end{enumerate}
\end{them}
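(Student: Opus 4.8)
The plan is to establish the three assertions in order, concentrating the analytic work in~(1) and deducing~(2) and~(3) from it together with the well-definedness results of the previous subsection. The governing picture is that, as a function of $z$, the quantity $f(A_k+\mathrm{i}z)$ is, up to a reflection, the convolution of the Fourier transform $\hat{\mathcal{A}}_k$ against a slice of $f$, so its dominating interval comes out as an intersection of constraining strips and its index is read off by matching one-sided exponential rates.

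For~(1) I would first reduce to $A_k$ regular. If $A_k$ is singular its Fourier transform is unavailable on any line, but Theorem~\ref{decompum} writes $A_k=A_k^+[-]A_k^-$ with $A_k^+,A_k^-$ regular of indices $(\alpha_k,\infty)$ and $(-\infty,\beta_k)$; hypothesis~(1) passes to both summands, hypothesis~(2) is exactly what keeps the two shifted dominating intervals $(-\infty,\alpha-\alpha_k)$ and $(\beta-\beta_k,\infty)$ overlapping, and Definition~\ref{singdef} together with Proposition~\ref{twotemprop} then reduce $f(A_k+\mathrm{i}z)$ to the regular summands. So assume $A_k$ regular; by Theorem~\ref{corft}, $\hat{A}_k=(\hat{\mathcal{A}}_k,\Omega_{\alpha_k,\beta_k})$ is regular with regular interval containing $(-b_k,-a_k)$. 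For $z=v-\mathrm{i}\tau$ with $\tau\in(\beta-\beta_k,\alpha-\alpha_k)$ I would pick a contour level $t\in(\alpha_k,\beta_k)\cap(\beta-\tau,\alpha-\tau)$, nonempty precisely for $\tau$ in that range, and set
\begin{equation*}
	f(A_k+\mathrm{i}z)=\int_{-\infty-\mathrm{i}t}^{\infty-\mathrm{i}t}\hat{\mathcal{A}}_k(w)\,f(\mathrm{i}w+\mathrm{i}z)\,\mathrm{d}w=\int_{-\infty}^{\infty}\hat{\mathcal{A}}_k(u-\mathrm{i}t)\,f\bigl((t+\tau)+\mathrm{i}(u+v)\bigr)\,\mathrm{d}u,
\end{equation*}
which is independent of the admissible $t$ by Lemma~\ref{welldeflem}, the integrand being exponential type in $w$. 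Now $u\mapsto\hat{\mathcal{A}}_k(u-\mathrm{i}t)$ decays at the one-sided rates dictated by the regular interval $(-b_k,-a_k)$ of $\hat{A}_k$, while $u\mapsto f((t+\tau)+\mathrm{i}(u+v))$ grows at the one-sided rates dictated by the index $(b,a)$ of the original umbrae; hypothesis~(1), $a_k<a$ and $b<b_k$, is precisely what makes these rates beat each other on each side. Hence the integral converges absolutely, the range of $\tau$ shows the dominating interval is $\Omega_{\beta-\beta_k,\alpha-\alpha_k}$, and analyticity in $z$ follows by differentiating under the integral (a local-in-$\tau$ uniform $L^1$ bound, obtained the same way with $f'$ in place of $f$ since $f'$ is again exponential type of the same index, justifies this).

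The index estimate is the step I expect to be the main obstacle, being a somewhat delicate piece of bookkeeping. For $v\ge 0$ one splits the $u$-integral at $u=0$ and $u=-v$ and inserts on each of the three pieces the matching one-sided bounds for $\hat{\mathcal{A}}_k$ and for $f$; the three resulting elementary integrals are each $O\bigl(\mathrm{e}^{(b\vee a_k+\varepsilon)v}\bigr)$, with the borderline cases---an exponent hitting $0$ and producing a factor $v$---absorbed into the $\varepsilon$. The mirror-image splitting for $v\le 0$ yields $O\bigl(\mathrm{e}^{(a\wedge b_k-\varepsilon)v}\bigr)$. Therefore the constructed umbrae has positive index at most $b\vee a_k$ and negative index at least $a\wedge b_k$, which is the assertion ``index at least $(b\vee a_k,a\wedge b_k)$''. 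The point requiring care is the sign convention, namely that a larger negative index corresponds to faster decay as $\mathrm{Im}\to-\infty$, so the pieces must be matched to the correct side.

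For~(2) I would regard $f(A_k+A_j+\mathrm{i}z)$, with $z$ a parameter, as the evaluation of $g(w_k,w_j)=f(w_k+w_j+\mathrm{i}z)$ and invoke Proposition~\ref{welldefplus}, whose hypotheses reduce to the integrability and $\mathcal{T}$-membership conditions of Proposition~\ref{welldefeval}. These follow from~(1) applied twice: it shows $(w_j,z)\mapsto f(A_k+w_j+\mathrm{i}z)$ is again of the form treated by~(1) with $A_j$ in place of $A_k$ (the intersection hypothesis $(a_k,b_k)\cap(a_j,b_j)\neq\varnothing$ giving $a_j<b_k$ and $a_k<b_j$, which is exactly what validates hypothesis~(1) for the iterated step), the extra hypothesis $(\beta_k-\alpha_k)+(\beta_j-\alpha_j)>-(\alpha-\beta)$ being exactly the statement that the twice-shifted dominating interval $(\beta-\beta_k-\beta_j,\alpha-\alpha_k-\alpha_j)$ is nonempty, so the iteration does not run out of room; Fubini (Proposition~\ref{welldefeval}) then exchanges the two evaluations. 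For~(3), since $[+]$ adds generating functions and the Fourier transform is linear, $\widehat{\mathcal{A}_k+\mathcal{A}_j}=\hat{\mathcal{A}}_k+\hat{\mathcal{A}}_j$ wherever both sides are defined, and the second hypothesis of~(3) (that $(\beta-\beta_k,\alpha-\alpha_k)$ and $(\beta-\beta_j,\alpha-\alpha_j)$ meet) amounts to hypothesis~(2) for the combined umbrae $A_k[+]A_j$, so~(1) applies to it. When $A_k[+]A_j$ is regular one may take a single contour level valid for $A_k$, $A_j$ and the $f$-constraint simultaneously, and the integral then splits directly into those for $f(A_k+\mathrm{i}z)$ and $f(A_j+\mathrm{i}z)$; when $A_k[+]A_j$ is singular one first passes to regular decompositions via Theorem~\ref{decompum} and argues componentwise, linearity of the Fourier transform and Lemma~\ref{welldeflem} propagating the splitting. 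All told, only~(1)'s index computation needs real care; the remainder is linearity, Fubini, and contour invariance.
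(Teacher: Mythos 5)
Your proposal is correct and takes essentially the same route as the paper: reduce to regular $A_k$ via Theorem \ref{decompum} and Definition \ref{singdef}, represent $f(A_k+\mathrm{i}z)$ by the evaluation integral on a contour level compatible with both strips, and get the index from the identical three-piece splitting (at $0$ and $-v$) using the one-sided exponential bounds dictated by the dominating interval of $A_k$ and the index of $f$, with $a_k<a$, $b<b_k$ making the rates match. Parts (2) and (3), which the paper's proof leaves essentially implicit, you fill in in the natural way (iterating part (1), Fubini via Propositions \ref{welldefeval}--\ref{welldefplus}, and linearity of the Fourier transform), consistent with the paper's intent.
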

\begin{proof}
	First assume that $\alpha_k<\beta_k$, and take $a_k<\tilde{a}_k<\tilde{a}<a, b<\tilde{b}<\tilde{b}_k<b_k$, we have
	\begin{align*}
		|\hat{\mathcal{A}}(z)|
		&\leq 
		\begin{cases*}
			C_1(-\mathrm{Im} z)\cdot
			\mathrm{e}^{-\tilde{b}_k\mathrm{Re} z}
			&$\mathrm{Re} z\geq 0, 
			\alpha_k<-\mathrm{Im} z<\beta_k$\\
			C_1(-\mathrm{Im} z)\cdot
			\mathrm{e}^{-\tilde{a}_k\mathrm{Re} z}
			&$\mathrm{Re} z\leq 0, 
			\alpha_k<-\mathrm{Im} z<\beta_k$
		\end{cases*}\\
		|f(\mathrm{i}z)|
		&\leq 
		\begin{cases*}
			C_2(-\mathrm{Im} z)\cdot
			\mathrm{e}^{\tilde{b}\mathrm{Re} z}
			&$\mathrm{Re} z\geq 0, 
			\beta<-\mathrm{Im} z<\alpha$\\
			C_2(-\mathrm{Im} z)\cdot
			\mathrm{e}^{\tilde{a}\mathrm{Re} z}
			&$\mathrm{Re} z\leq 0, 
			\beta<-\mathrm{Im} z<\alpha$
		\end{cases*}
	\end{align*}
	So we can directly verify that, for $\mathrm{Re} z_0\in(\beta-\beta_k,\alpha-\alpha_k)$ there exists $t$ such that
	\begin{align*}
		|f(A+z_0)|&\leq
		\int_{-\infty-\mathrm{i}t}^{\infty-\mathrm{i}t}
		|\hat{\mathcal{A}}(z)|\cdot
		|f(\mathrm{i}z+z_0)|\mathrm{d}z\\
		&\lesssim
		\mathrm{e}^{\tilde{b}\mathrm{Im} z_0}
		\int_{0}^{\infty}
		\mathrm{e}^{-(\tilde{b}_k-\tilde{b})x}\mathrm{d}x+
		\mathrm{e}^{\tilde{b}\mathrm{Im} z_0}
		\int_{-\mathrm{Im}z_0}^{0}
		\mathrm{e}^{(\tilde{b}-\tilde{a}_k)x}\mathrm{d}x+\\
		&\phantom{==}
		\mathrm{e}^{\tilde{a}\mathrm{Im} z_0}
		\int_{-\infty}^{-\mathrm{Im}z_0}
		\mathrm{e}^{(\tilde{a}-\tilde{a}_k)x}\mathrm{d}x\\
		&\leq
		\frac{\mathrm{e}^{\tilde{b}\mathrm{Im} z_0}}{\tilde{b}_k-\tilde{b}}+
		\frac{\mathrm{e}^{\tilde{b}\mathrm{Im} z_0}-\mathrm{e}^{\tilde{a}_k\mathrm{Im} z_0}}{\tilde{b}-\tilde{a}_k}+
		\frac{\mathrm{e}^{\tilde{a}_k\mathrm{Im} z_0}}{\tilde{a}-\tilde{a}_k}
		\quad \mathrm{Im}z_0\geq 0.
	\end{align*}
	Repeat the same method to estimate the other half and we can conclude that $(f(A+\mathrm{i}z),\Omega_{\beta-\beta_k,\alpha-\alpha_k})$ is a umbrae with index $(b\vee a_k,a\wedge b_k)$. Finally, apply Theorem \ref{decompum} and Definition \ref{singdef} to $A$.
\end{proof}
\noindent
The condition of the umbrae under Theorem \ref{naturalthem} seems to be the most natural, but not enough for our purposes. We can also notice that the performance of the umbrae is more determined by its dominating interval.

In order to be able to perform basic calculations, we need the following theorem.
\begin{lemm}
	\label{rapiddec}
	If $(f(\mathrm{i}z),\Omega_{\beta,\alpha})$ is a regular umbrae with index $(b,a)$ satisfied $0\in(b,a)$, then for every $s\in(\beta,\alpha)$ we have $f(s+\mathrm{i}\xi)\in\mathcal{S}(\mathbb{R})$.
\end{lemm}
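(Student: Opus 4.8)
The plan is to read off from the hypotheses that $f$ is analytic and \emph{exponentially} decaying along vertical lines inside the open strip $\{z:\mathrm{Re}\,z\in(\beta,\alpha)\}$, and then to upgrade decay of $f$ itself to rapid decay of all of its derivatives by Cauchy's estimates; rapid decay of $f$ together with every derivative is of course far more than membership in $\mathcal{S}(\mathbb{R})$ requires.

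First I would unwind the definitions. Writing $g(z)=f(\mathrm{i}z)$, the assumption that $(g,\Omega_{\beta,\alpha})$ is a umbrae with index $(b,a)$ says exactly that $f$ is analytic on $\{z:\mathrm{Re}\,z\in(\beta,\alpha)\}$, of exponential type there, and that for each $\sigma\in(\beta,\alpha)$ and each $b'>b$, $a'<a$ there is a locally bounded $C$ with $|f(\sigma+\mathrm{i}\eta)|\le C(\sigma)\mathrm{e}^{b'\eta}$ for $\eta\ge0$ and $|f(\sigma+\mathrm{i}\eta)|\le C(\sigma)\mathrm{e}^{a'\eta}$ for $\eta\le0$ (this is just the translation of the estimates on $g(x-\mathrm{i}t)=f(t+\mathrm{i}x)$). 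This is precisely where the hypothesis $0\in(b,a)$ is used: choosing $b<b'<0<a'<a$ and setting $c\coloneqq\min(-b',a')>0$ gives the two-sided bound $|f(\sigma+\mathrm{i}\eta)|\le C(\sigma)\mathrm{e}^{-c|\eta|}$ for every $\eta\in\mathbb{R}$, with a constant uniform over any compact subinterval of $(\beta,\alpha)$ since $C$ is locally bounded.

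Next I would fix $s\in(\beta,\alpha)$, set $r\coloneqq\tfrac12\min(s-\beta,\alpha-s)>0$, and let $C^{\ast}<\infty$ be a bound for the constant above on $[s-r,s+r]$. For every $\xi\in\mathbb{R}$ the closed disc $\overline{D}(s+\mathrm{i}\xi,r)$ lies in the strip with real parts confined to $[s-r,s+r]$, and a point $w=\sigma+\mathrm{i}\eta$ on $\partial D(s+\mathrm{i}\xi,r)$ satisfies $|\eta|\ge|\xi|-r$, hence $|f(w)|\le C^{\ast}\mathrm{e}^{-c|\eta|}\le C^{\ast}\mathrm{e}^{cr}\mathrm{e}^{-c|\xi|}$. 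Cauchy's formula for the derivatives on this circle then yields, for every $n\in\mathbb{N}$,
\[
\left|\frac{\mathrm{d}^{n}}{\mathrm{d}\xi^{n}}f(s+\mathrm{i}\xi)\right|
=\bigl|f^{(n)}(s+\mathrm{i}\xi)\bigr|
\le\frac{n!}{r^{n}}\,C^{\ast}\mathrm{e}^{cr}\,\mathrm{e}^{-c|\xi|},
\qquad\xi\in\mathbb{R}.
\]
Since $f$ is analytic, $\xi\mapsto f(s+\mathrm{i}\xi)$ is smooth, and the displayed inequalities show that each of its derivatives decays exponentially in $\xi$, in particular faster than any polynomial; hence $\xi\mapsto f(s+\mathrm{i}\xi)\in\mathcal{S}(\mathbb{R})$.

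I do not expect a genuine obstacle here; the only point needing a little care is the uniformity in $\xi$ — the Cauchy radius $r$ may be taken independent of $\xi$ because the strip has a fixed positive width near $s$, and the exponential-type constant is bounded on the compact range $[s-r,s+r]$ of real parts by local boundedness. (Alternatively one could run the argument through the Fourier correspondence of Theorem \ref{corft} applied to $(f(\mathrm{i}z),\Omega_{\beta,\alpha})$, whose transform is a regular umbrae having $0$ in its dominating interval, but the direct Cauchy estimate above is the shorter route.)
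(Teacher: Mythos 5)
Your proof is correct, and it takes a genuinely different route from the paper. The paper disposes of the lemma with a one-line appeal to the Correspondence Theorem (Theorem \ref{corft}): since $0\in(b,a)$, the Fourier transform of the umbrae $(f(\mathrm{i}z),\Omega_{\beta,\alpha})$ is a regular umbrae whose dominating interval contains the real axis and whose regular interval contains $(-\alpha,-\beta)$, so $\xi\mapsto f(s+\mathrm{i}\xi)$ has both exponential decay (from the index hypothesis) and an exponentially decaying Fourier transform, which together yield membership in $\mathcal{S}(\mathbb{R})$; this keeps the argument inside the umbrae/Fourier formalism the paper has already built, at the cost of leaving the final ``hence Schwartz'' step implicit. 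You instead argue directly in the strip: unwinding the index condition with $b<b'<0<a'<a$ gives the two-sided bound $|f(\sigma+\mathrm{i}\eta)|\le C(\sigma)\mathrm{e}^{-c|\eta|}$ uniformly for $\sigma$ in a compact neighbourhood of $s$ (local boundedness of $C$), and then Cauchy's estimates on discs of the fixed radius $r=\tfrac12\min(s-\beta,\alpha-s)$ give
\begin{equation*}
\left|\frac{\mathrm{d}^{n}}{\mathrm{d}\xi^{n}}f(s+\mathrm{i}\xi)\right|\le\frac{n!}{r^{n}}C^{\ast}\mathrm{e}^{cr}\mathrm{e}^{-c|\xi|},
\end{equation*}
so every derivative decays exponentially and the Schwartz property follows with explicit constants. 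Your route is more elementary and self-contained (no Paley--Wiener-type input, only strip analyticity plus the decay encoded by $0\in(b,a)$), and it makes the uniformity in $\xi$ completely transparent; the paper's route is shorter on the page and reuses Theorem \ref{corft}, which is the tool the surrounding lemmas are organized around. You also correctly identified where $0\in(b,a)$ enters and noted the Fourier alternative yourself, so nothing is missing.
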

\begin{proof}
	Apply Theorem \ref{corft}.
\end{proof}
\begin{lemm}
	Suppose $A=(\mathcal{A},\Omega_{a,b})$ is a regular umbrae with index $(\alpha,\beta)$, and
	\begin{enumerate}
		\item $-\infty<\alpha<\beta$;
		\item For every $t\in(a,b)$ we have $\mathrm{e}^{-\alpha x}\mathcal{A}(x-\mathrm{i}t)\in\mathcal{S}'(\mathbb{R})$;
		\item Denote $h(\xi)\coloneqq
		\mathrm{e}^{-t\xi}
		\mathcal{F}(\mathrm{e}^{-\alpha (x-\mathrm{i}t)}\mathcal{A}(x-\mathrm{i}t))$ in the sense of $\mathcal{S}'(\mathbb{R})$.
	\end{enumerate}
	If $(f(\mathrm{i}z),\Omega_{\beta_0,\alpha_0})$ is a umbrae with index $(b_0,a_0)$ satisfied
	\begin{equation*}
		a<a_0,\quad b_0<b,
	\end{equation*}
	then for every $s\in(\beta_0,\alpha_0)$ we have
	\begin{align*}
		f(A+\alpha-s)
		&=
		\int_{\mathbb{R}}
		\mathrm{e}^{t\xi}
		h(\xi)
		\mathrm{e}^{-t\xi}
		f(s+\mathrm{i}\xi)\mathrm{d}\xi.
	\end{align*}
\end{lemm}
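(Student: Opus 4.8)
The plan is to reduce the claimed identity to the definition of the evaluation function (Definition \ref{Evaldef}) by computing the Fourier transform $\hat{\mathcal{A}}$ explicitly in terms of $h$, and then pushing the contour of integration to the horizontal line where $f(s+\mathrm{i}\xi)$ lives. First I would unwind Definition \ref{genft}: for $z = x - \mathrm{i}t$ we have
\begin{equation*}
	\hat{\mathcal{A}}(\xi - \mathrm{i}s)
	=
	\frac{\mathrm{e}^{-(\xi-\mathrm{i}s)t}}{\sqrt{2\pi}}
	\int_{-\infty}^{\infty}
	\mathrm{e}^{-sx}\mathcal{A}(x-\mathrm{i}t)\mathrm{e}^{-\mathrm{i}\xi x}\mathrm{d}x,
\end{equation*}
and by pulling out the factor $\mathrm{e}^{\alpha(x - \mathrm{i}t)}$ this should rearrange into $\mathrm{e}^{\alpha(\cdot)}\hat{\mathcal{A}}$ being a shift of $\mathcal{F}(\mathrm{e}^{-\alpha(x-\mathrm{i}t)}\mathcal{A}(x-\mathrm{i}t))$, so that $h(\xi)$ as defined in hypothesis (3) is exactly $\hat{\mathcal{A}}$ evaluated along the appropriate line after correcting for the $\mathrm{e}^{\alpha(\cdot)}$ twist. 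Concretely I expect to land on a relation of the shape $\hat{\mathcal{A}}(\xi - \mathrm{i}\sigma) = h(\xi + \mathrm{i}(\sigma - \alpha))$ or similar, valid for $\sigma$ in the regular interval, which identifies the distribution $h$ with the analytic function $\hat{\mathcal{A}}$; note that Lemma \ref{rapiddec} applied to the umbrae $\hat{A}$ (whose regular interval contains $(-b,-a)$ by Theorem \ref{corft}) guarantees $h$ is a genuine Schwartz function along each such line, so the pairing in the statement is an honest Lebesgue integral.

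Next I would invoke Definition \ref{Evaldef}: since $f(\cdot + (\alpha - s)) \in \mathcal{T}_A(I)$ for a suitable $I$ — this is where the hypotheses $a < a_0$, $b_0 < b$ enter, ensuring the dominating intervals overlap enough and that the product $\hat{\mathcal{A}}(z) f(\mathrm{i}z + (\alpha - s))$ is integrable on some line $\mathbb{R} - \mathrm{i}t$ with $t$ in the regular interval of $A$ (namely $t = \alpha - s$ after the shift, using $s \in (\beta_0, \alpha_0)$) — we get
\begin{equation*}
	f(A + \alpha - s)
	=
	\int_{-\infty - \mathrm{i}t}^{\infty - \mathrm{i}t}
	\hat{\mathcal{A}}(z) f(\mathrm{i}z + \alpha - s)\,\mathrm{d}z.
\end{equation*}
The substitution $z = \mathrm{i}\xi$ (i.e. parametrizing the line $\mathbb{R} - \mathrm{i}t$ by $\xi \mapsto -\mathrm{i}\xi$ with the chosen $t$) turns $f(\mathrm{i}z + \alpha - s)$ into $f(s + \mathrm{i}\xi)$ after matching $t$ to $\alpha - s$ — this is the bookkeeping that forces the specific shift $\alpha - s$ in the statement — and turns $\hat{\mathcal{A}}(z)$ into the line value of $\hat{\mathcal{A}}$ that we identified with $\mathrm{e}^{t\xi} h(\xi) \mathrm{e}^{-t\xi}$ (the redundant-looking $\mathrm{e}^{t\xi}\mathrm{e}^{-t\xi}$ in the statement is presumably there to display how the $\alpha$-twist in the definition of $h$ cancels). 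Assembling these gives exactly the asserted formula.

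The main obstacle I anticipate is the careful matching of contours and the verification that $f(\cdot + \alpha - s) \in \mathcal{T}_A(I)$ for an interval $I$ meeting $(\alpha,\beta)$ at the right point — in other words, checking that the line $t = \alpha - s$ is legitimately inside the regular interval $(\alpha, \beta)$ of $A$, or rather that after the shift by $\alpha - s$ the relevant integrability holds; this requires combining the index data of $A$ with that of the umbrae $(f(\mathrm{i}z), \Omega_{\beta_0,\alpha_0})$ via an estimate like the one in the proof of Theorem \ref{naturalthem}, splitting $\int_0^\infty$ and $\int_{-\infty}^0$ and balancing the exponential rates of $\hat{\mathcal{A}}$ against those of $f$. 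A secondary subtlety is the passage from the $\mathcal{S}'(\mathbb{R})$-level definition of $h$ to a pointwise/function-level identity with $\hat{\mathcal{A}}$; here I would lean on Lemma \ref{rapiddec} to know $h$ is Schwartz on each horizontal line and on the analyticity of $\hat{\mathcal{A}}$ (Theorem \ref{corft}) to upgrade the distributional equality to equality of analytic functions, so that the final integral is unambiguous. Once those two points are settled the rest is the routine substitution and cancellation described above.
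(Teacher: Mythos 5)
Your proposal treats the identity as Definition \ref{Evaldef} in disguise: you identify $h$ with $\hat{\mathcal{A}}$ along an interior horizontal line, claim via Lemma \ref{rapiddec} (applied to $\hat{A}$) that it is a genuine Schwartz function, and then finish by a substitution. That misses the actual content of the lemma. The distribution $h$ lives at the height $\alpha$, which is the \emph{boundary} of the regular interval $(\alpha,\beta)$: there $\hat{\mathcal{A}}$ need not be integrable, nor even a function --- hypotheses (2)--(3) define $h$ only as a tempered distribution after an exponential twist (for $\mathrm{B}$ with $\alpha=0$, for instance, the pole of $\mathcal{A}$ produces a singular part), and Lemma \ref{rapiddec} says nothing about this boundary line (nor does it apply to $\hat A$ without the index condition $0$ in the relevant interval). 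So the right-hand side is a pairing of a tempered distribution with a test function, not a Lebesgue integral, and no choice of interior line plus change of variables reaches it. What your argument does establish is essentially the interior identity $\int_{\mathbb{R}}\hat{\mathcal{A}}(\xi-\mathrm{i}(\alpha+\varepsilon))f(s+\mathrm{i}\xi)\,\mathrm{d}\xi=f(A+(\alpha+\varepsilon)-s)$, with integrability supplied by Theorem \ref{naturalthem} (that is where $a<a_0$, $b_0<b$ enter). The paper then passes to the boundary: (i) continuity of the Fourier transform on $\mathcal{S}'(\mathbb{R})$ gives $\mathrm{e}^{t\xi}\hat{\mathcal{A}}(\xi-\mathrm{i}(\alpha+\varepsilon))\to\mathrm{e}^{t\xi}h(\xi)$ in $\mathcal{S}'$; (ii) $f$ is split as $f^{+}-f^{-}$ by Theorem \ref{decompum}, and Lemma \ref{rapiddec} is applied to $f^{\pm}$ --- not to $\hat A$ --- so that the twisted line restrictions $\mathrm{e}^{-\tilde b\xi}f^{+}(s+\mathrm{i}\xi)$, $\mathrm{e}^{-\tilde a\xi}f^{-}(s+\mathrm{i}\xi)$ are Schwartz test functions against which the distributional convergence can be paired; (iii) continuity of $z\mapsto f(A+z)$, again from Theorem \ref{naturalthem}, identifies the limit of the left-hand side as $f(A+\alpha-s)$. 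None of these three steps is present in your plan, and without them you have not proved the stated formula.

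A secondary point: your bookkeeping of the shift is off. On the line $z=\xi-\mathrm{i}t$ one has $f(\mathrm{i}z+\alpha-s)=f(t+\alpha-s+\mathrm{i}\xi)$, so ``matching $t$ to $\alpha-s$'' does not produce $f(s+\mathrm{i}\xi)$; the shift must be matched against the height $\alpha+\varepsilon$ of the $\hat{\mathcal{A}}$-line, and one should note that the lemma, unlike Theorem \ref{calu}, does not assume $\alpha\in(\beta_0,\alpha_0)$, so $f$ may not even be defined near the line $\mathrm{Re}\,z=\alpha$ --- which is precisely why the statement pairs $h$ against $f(s+\mathrm{i}\xi)$ and compensates by shifting the umbrae. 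Getting this matching right, and then running the $\varepsilon\to0^{+}$ limit with the decomposition and twists described above, is what a complete proof requires.
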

\begin{proof}
	According to the continuity of Fourier transform on $\mathcal{S}'(\mathbb{R})$, for every $t\in(a,b)$ we have 
	\begin{equation}
		\lim\limits_{\varepsilon\to 0^+}
		\mathrm{e}^{t\xi}
		\hat{\mathcal{A}}(\xi-\mathrm{i}(\alpha+\varepsilon))=
		\mathrm{e}^{t\xi}h(\xi),
	\end{equation}
	in the sense of $\mathcal{S}'(\mathbb{R})$.
	
	Apply Theorem \ref{decompum} on $f$ to get
	\begin{enumerate}
		\item $(f^+(\mathrm{i}z),\Omega_{\beta_0,\alpha_0})$ is a regular umbrae with index $(b_0,\infty)$;
		\item $(f^-(\mathrm{i}z),\Omega_{\beta_0,\alpha_0})$ is a regular umbrae with index $(-\infty,a_0)$;
		\item $f=f^+-f^-$,
	\end{enumerate}
	according to Lemma \ref{rapiddec}, for every $s\in(\beta_0,\alpha_0)$ we have
	\begin{align*}
		\int_{\mathbb{R}}
		\hat{\mathcal{A}}(\xi-\mathrm{i}(\alpha+\varepsilon))
		f^+(s+\mathrm{i}\xi)\mathrm{d}\xi
		&=
		\int_{\mathbb{R}}
		\mathrm{e}^{\tilde{b}\xi}
		\hat{\mathcal{A}}(\xi-\mathrm{i}(\alpha+\varepsilon))
		\mathrm{e}^{-\tilde{b}\xi}
		f^+(s+\mathrm{i}\xi)\mathrm{d}\xi\\
		&\to
		\int_{\mathbb{R}}
		\mathrm{e}^{\tilde{b}\xi}
		h(\xi)
		\mathrm{e}^{-\tilde{b}\xi}
		f^+(s+\mathrm{i}\xi)\mathrm{d}\xi\\
		&=
		\int_{\mathbb{R}}
		\mathrm{e}^{t\xi}
		h(\xi)
		\mathrm{e}^{-t\xi}
		f^+(s+\mathrm{i}\xi)\mathrm{d}\xi,
	\end{align*}
	Repeat the same method to the other half and we can conclude that
	\begin{align*}
		\int_{\mathbb{R}}
		\hat{\mathcal{A}}(\xi-\mathrm{i}(\alpha+\varepsilon))
		f(s+\mathrm{i}\xi)\mathrm{d}\xi
		&\to
		\int_{\mathbb{R}}
		\mathrm{e}^{t\xi}
		h(\xi)
		\mathrm{e}^{-t\xi}
		f(s+\mathrm{i}\xi)\mathrm{d}\xi\\
		f(A+(\alpha+\varepsilon)-s)
		&\to
		f(A+\alpha-s),
	\end{align*}
	where $a<\tilde{a}<a_0, b_0<\tilde{b}<b, a<t<b$. The continuity of $f(A+z)$ comes from Theorem \ref{naturalthem}.
\end{proof}
\begin{them}
	\label{calu}
	Suppose $A=(\mathcal{A},\Omega_{a,b})$ is a umbrae with index $(\alpha,\beta)$, and
	\begin{enumerate}
		\item $-\infty<\alpha\leq\beta$;
		\item For every $t\in(a,b)$ we have $\mathrm{e}^{-\alpha x}\mathcal{A}(x-\mathrm{i}t)\in\mathcal{S}'(\mathbb{R})$;
		\item Denote $h(\xi)\coloneqq
		\mathrm{e}^{-t\xi}
		\mathcal{F}(\mathrm{e}^{-\alpha 	(x-\mathrm{i}t)}\mathcal{A}(x-\mathrm{i}t))$ in the sense of $\mathcal{S}'(\mathbb{R})$,
	\end{enumerate}
	If $(f(\mathrm{i}z),\Omega_{\beta_0,\alpha_0})$ is a umbrae with index $(b_0,a_0)$ satisfied
	\begin{enumerate}
		\item $a<a_0$; $b_0<b$;
		\item $\alpha\in(\beta_0,\alpha_0)$,
	\end{enumerate}
	then we have
	\begin{align}
		f(A)
		&=
		\int_{\mathbb{R}}
		\mathrm{e}^{t\xi}
		h(\xi)
		\mathrm{e}^{-t\xi}
		f(\alpha+\mathrm{i}\xi)\mathrm{d}\xi.
	\end{align}
\end{them}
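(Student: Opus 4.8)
The idea is to reduce Theorem \ref{calu} to the previous lemma by a limiting argument in the index parameter. The previous lemma produces, for a \emph{regular} umbrae $A$ with strictly $-\infty<\alpha<\beta$, the identity
\begin{equation*}
	f(A+\alpha-s)=\int_{\mathbb{R}}\mathrm{e}^{t\xi}h(\xi)\mathrm{e}^{-t\xi}f(s+\mathrm{i}\xi)\,\mathrm{d}\xi
\end{equation*}
for every $s\in(\beta_0,\alpha_0)$. Specializing $s=\alpha$ in that lemma already gives exactly the claimed formula whenever $\alpha\in(\beta_0,\alpha_0)$ and $A$ is regular; so the only real work is the singular case $\alpha\le\beta$ (equivalently $\alpha=\beta$, since the index always satisfies $\alpha\le\beta$ is not automatic — here the hypothesis is $\alpha\le\beta$, so we must handle $\alpha=\beta$ and more generally the boundary). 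I would treat the singular umbrae via Theorem \ref{decompum} and Definition \ref{singdef}: write $A=A^+[-]A^-$ with $A^+$ regular of index $(\alpha,\infty)$ and $A^-$ regular of index $(-\infty,\beta)\supseteq(-\infty,\alpha)$, so that $f(A)=f(A^+)-f(A^-)$ by definition once we check $f\in\mathcal{T}_{A^+}(I)\cap\mathcal{T}_{A^-}(I)$ for a suitable $I\supseteq[\beta,\alpha]$; this membership is supplied by Theorem \ref{naturalthem}, whose hypotheses ($a<a_0$, $b_0<b$, and the index-width condition $\beta_0-\alpha_0>-(\alpha-\beta)$, which here reads $\alpha_0-\beta_0>\beta-\alpha\ge 0$ and holds since $\alpha\in(\beta_0,\alpha_0)$) are exactly what we have assumed.

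The key steps, in order, are: (1) Invoke Theorem \ref{decompum} to get the decomposition $A=A^+[-]A^-$ into regular umbraes with generating functions $\mathcal{A}^\pm=\mathcal{A}\cdot\check\rho_\pm$ sharing the dominating interval $(a,b)$; note that $A^\pm$ still have positive index $\alpha$ (this is the content of Theorem \ref{component} / the construction in Theorem \ref{decompum}). (2) Check $f\in\mathcal{T}_{A^+}(I)\cap\mathcal{T}_{A^-}(I)$ on an open interval $I$ containing $[\beta,\alpha]$ and meeting each regular interval, using Theorem \ref{naturalthem} to propagate the integrability, so that $f(A)=f(A^+)-f(A^-)$ via Definition \ref{singdef}. (3) Apply the previous lemma to each of $A^+$ and $A^-$ with $s=\alpha$ — here one must verify the lemma's standing hypotheses for $A^\pm$, namely $-\infty<\alpha<\beta^\pm$ (true: $\beta^+=\infty$, $\beta^-\ge\alpha$ but in fact the relevant regular interval of $A^-$ is $(-\infty,\beta)$ with $\beta>\alpha$ unless $\alpha=\beta$, so a further $\varepsilon$-shift may be needed), the tempered-distribution condition $\mathrm{e}^{-\alpha x}\mathcal{A}^\pm(x-\mathrm{i}t)\in\mathcal{S}'(\mathbb{R})$, and the identification of $h^\pm(\xi)$. (4) Observe that the Fourier-side weights combine: $h^+-h^-$ is the distributional limit of $\mathrm{e}^{t\xi}\bigl(\hat{\mathcal{A}}^+-\hat{\mathcal{A}}^-\bigr)(\xi-\mathrm{i}(\alpha+\varepsilon))$, and since $\check\rho_+[-]\check\rho_-=0$ (Theorem \ref{decompum}, item 3) and $\mathcal{A}=\mathcal{A}^+-\mathcal{A}^-$ at the level of generating functions after the decomposition, the two boundary values assemble to the single $h(\xi)$ attached to $A$. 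Subtracting the two instances of the lemma then yields
\begin{equation*}
	f(A)=f(A^+)-f(A^-)=\int_{\mathbb{R}}\mathrm{e}^{t\xi}h(\xi)\mathrm{e}^{-t\xi}f(\alpha+\mathrm{i}\xi)\,\mathrm{d}\xi,
\end{equation*}
which is the claim.

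The main obstacle is step (4): making the passage $\hat{\mathcal{A}}^\pm(\xi-\mathrm{i}(\alpha+\varepsilon))\to$ (boundary value) rigorous \emph{simultaneously} for the two halves and checking that their difference recombines to precisely the $h$ defined from $\mathcal{A}$ itself, rather than to $h^+-h^-$ with some leftover. The subtlety is that $\mathcal{A}$ need not equal $\mathcal{A}^+-\mathcal{A}^-$ as functions on all of $\Omega_{a,b}$ unless the decomposition is normalized so that $\mathcal{A}=\mathcal{A}\check\rho_+-\mathcal{A}\check\rho_-$, i.e. unless $\check\rho_+-\check\rho_-\equiv 1$; this is exactly item 3 of Theorem \ref{decompum} ($\check\Lambda_+[-]\check\Lambda_-=0$, meaning $\check\rho_+-\check\rho_-$ is the identity for $[+]$, i.e. the constant $1$), so one must quote that identity carefully and track that the $\mathcal{S}'$-convergence used in the previous lemma is preserved under multiplication by $\mathcal{A}$ and under the subtraction. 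A secondary, more bookkeeping-level obstacle is the genuinely singular case $\alpha=\beta$, where $A^-$ has regular interval $(-\infty,\alpha)$ only (the right endpoint is not strictly bigger than $\alpha$), so one applies the previous lemma at $\alpha-\varepsilon$ in place of $\alpha$ and lets $\varepsilon\to 0$, using the continuity of $z\mapsto f(A^-+z)$ from Theorem \ref{naturalthem} to pass to the limit; the same continuity handles the left endpoint for $A^+$ if needed. Everything else — the integrability estimates, the interchange of limit and integral — is routine given Lemma \ref{welldeflem}, Lemma \ref{L1-PL}, and the Schwartz-class decay from Lemma \ref{rapiddec}.
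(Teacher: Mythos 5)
Your plan is essentially the paper's own proof: the regular case is exactly the preceding lemma specialized to $s=\alpha$, and the paper disposes of the singular case with the single remark ``only need to apply Theorem \ref{decompum}'', i.e.\ the decomposition $A=A^+[-]A^-$ and recombination that you spell out (including the normalization $\check\rho_+-\check\rho_-=1$ and the $\varepsilon$-shift at the boundary, which the paper leaves implicit). No change of route, just a more detailed account of the same reduction.
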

\begin{proof}
	For the singular umbrae $A$, only need to apply Theorem \ref{decompum}.
\end{proof}
\begin{corr}
	\label{specficcalu}
	Suppose $f$ is an exponential type analytic function defined on $\{z\in\mathbb{C}:\mathrm{Re}z\in (\beta,\alpha)\}$.
	\begin{enumerate}
		\item $f((c)+z)=f(c+z)$;
		\item $f(\mathrm{D}+z)=f'(z)$;
		\item $f(\Delta+z)=f(z+1)-f(z)$, where $\beta-\alpha>1$.
	\end{enumerate}
\end{corr}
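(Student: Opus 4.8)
The plan is to verify that each of the three special umbrae $(c)$, $\mathrm{D}$, and $\Delta$ satisfies the hypotheses of Theorem \ref{calu}, and then simply read off what the integral formula there produces. For each case the key is to identify the distribution $h(\xi)$ associated with the generating function via $h(\xi)=\mathrm{e}^{-t\xi}\mathcal{F}(\mathrm{e}^{-\alpha(x-\mathrm{i}t)}\mathcal{A}(x-\mathrm{i}t))$, and the index $(\alpha,\beta)$ of the umbrae in question, which are already tabulated in the list of special umbrae. The companion umbrae $(f(\mathrm{i}z),\Omega_{\beta_0,\alpha_0})$ is the one encoding $f$: since $f$ is exponential type and analytic on $\{\mathrm{Re}z\in(\beta,\alpha)\}$, the function $z\mapsto f(\mathrm{i}z)$ is analytic on $\Omega_{\beta,\alpha}$ and of exponential type, so it defines an umbrae whose regular interval contains the required point; in each case one checks that the containment $\alpha\in(\beta_0,\alpha_0)$ and the dominating-interval conditions $a<a_0$, $b_0<b$ hold, which they do because $(c)$ and $\mathrm{D}$ have dominating interval $\mathbb{C}=\Omega_{-\infty,\infty}$ and $\Delta$ has dominating interval $\mathbb{C}$ as well.

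For $(c)=(\mathrm{e}^{cz},\mathbb{C})$ the index is $(\mathrm{Re}\,c,\mathrm{Re}\,c)$, so $\alpha=\beta=\mathrm{Re}\,c$; computing $\mathrm{e}^{-\alpha(x-\mathrm{i}t)}\mathcal{A}(x-\mathrm{i}t)=\mathrm{e}^{-\mathrm{Re}(c)(x-\mathrm{i}t)}\mathrm{e}^{c(x-\mathrm{i}t)}=\mathrm{e}^{\mathrm{i}\,\mathrm{Im}(c)(x-\mathrm{i}t)}$, whose Fourier transform is (up to the normalizing constant) a translated Dirac delta, one finds $h(\xi)$ is a point mass located so that the integral in Theorem \ref{calu} collapses to evaluating $f$ at $\alpha+\mathrm{i}\,\mathrm{Im}(c)+\text{(the }z\text{-shift)}=c$, giving $f((c)+z)=f(c+z)$. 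For $\mathrm{D}=(z,\mathbb{C})$ the index is $(0,0)$, so $\alpha=0$; here $\mathcal{A}(x-\mathrm{i}t)=x-\mathrm{i}t$, and its Fourier transform in $\mathcal{S}'$ is a multiple of $\delta'$, so the integral against $h$ becomes a derivative evaluated at the base point, yielding $f(\mathrm{D}+z)=f'(z)$. For $\Delta=(\mathrm{e}^z-1,\mathbb{C})$ the index is $(1,0)$, so $\alpha=1$, and the condition $\beta-\alpha>1$ in the statement guarantees that $1\in(\beta_0,\alpha_0)$ and that the relevant products are integrable; the transform of $\mathrm{e}^z-1$ is a difference of two point masses (one from $\mathrm{e}^z$ and one from the constant $-1$), which makes the integral produce $f(z+1)-f(z)$.

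The main obstacle is bookkeeping rather than depth: one must be careful that $\mathrm{e}^{cz}$, $z$, and $\mathrm{e}^z-1$ are genuinely of exponential type on the relevant strips and that the tempered-distribution Fourier transforms are computed with the correct shifts and normalizations so that the $\mathrm{i}z$ versus $z$ substitutions in Definition \ref{Evaldef} and in Theorem \ref{calu} line up. The $\Delta$ case additionally requires checking the integrability hypothesis of Theorem \ref{calu} for the factor $\mathrm{e}^z-1$ — its generating function grows like $\mathrm{e}^{\mathrm{Re}z}$ on the right, so one needs $\beta_0$ strictly above $1$, which is exactly what $\beta-\alpha>1$ secures once we set $(\beta_0,\alpha_0)=(\beta,\alpha)$. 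Once these routine verifications are in place, the three identities drop out immediately from the integral formula, so the proof is short. The shift by $z$ in each formula is handled uniformly by the translation result already established (applying $f(A+z)$ with $A$ one of the special umbrae), so it suffices to treat the $z=0$ case and invoke analyticity in $z$.
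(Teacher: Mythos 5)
Your treatment of cases 1 and 2 is sound and is essentially the intended route: the corollary is meant to be read off from Theorem \ref{calu}, and for $(c)$ and $\mathrm{D}$ the hypotheses do hold (indices $(\mathrm{Re}\,c,\mathrm{Re}\,c)$ and $(0,0)$ satisfy $-\infty<\alpha\le\beta$, and $\mathrm{e}^{-\alpha x}\mathcal{A}(x-\mathrm{i}t)$ is bounded, resp.\ of polynomial growth, hence tempered), so $h$ is a shifted Dirac mass, resp.\ a combination of $\delta'$ and $\delta$, and the identities follow modulo the normalization bookkeeping you acknowledge.

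Case 3 as written has a genuine gap: Theorem \ref{calu} cannot be applied directly to $\Delta$. Its index is $(1,0)$ with $\alpha=1>\beta=0$, so the hypothesis $-\infty<\alpha\le\beta$ fails ($\Delta$ is genuinely singular, not the boundary case $\alpha=\beta$ that the theorem still covers), and moreover $\mathrm{e}^{-\alpha x}\mathcal{A}(x-\mathrm{i}t)=\mathrm{e}^{-\mathrm{i}t}-\mathrm{e}^{-x}$ is not in $\mathcal{S}'(\mathbb{R})$, so there is no single tempered $h$ equal to ``a difference of two point masses'': the mass coming from $\mathrm{e}^{z}$ and the mass coming from $-1$ are attached to different exponential weights ($\alpha=1$ versus $\alpha=0$) and live on different horizontal lines. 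This is not merely a technicality: the conclusion of Theorem \ref{calu} pairs $h$ only against $f(\alpha+\mathrm{i}\xi)$, i.e.\ against values of $f$ on the single vertical line $\mathrm{Re}=1$, so no choice of $h$ in that formula can ever produce the term $-f(z)$, which needs $f$ evaluated at $\mathrm{Re}=0$. The repair is exactly the singular-umbrae machinery the paper built for this purpose: write $\Delta=(1)\,[-]\,[1]$, pass to a regular decomposition (Definition \ref{singdef}, Theorem \ref{decompum}, or part 3 of Theorem \ref{naturalthem}) to get $f(\Delta+z)=f((1)+z)-f([1]+z)$, and apply your case-1 computation (or Theorem \ref{calu}) to each piece separately, since $\mathrm{e}^{z}$ and the constant $1$ each have $\alpha=\beta$. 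Accordingly, the hypothesis that the strip of $f$ has width greater than $1$ is there to guarantee that both evaluation heights, $z$ and $z+1$, lie inside the strip so that both pieces are defined; it is not an integrability condition enabling a single application of Theorem \ref{calu} to $\mathrm{e}^{z}-1$, as your write-up suggests.
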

\begin{prop}
	Suppose $A=(\mathcal{A},\Omega_{a,b})$ is a umbrae with index $(\alpha,\beta)$.
	\begin{enumerate}
		\item $\mathrm{e}^{Az}=\mathcal{A}(z)\quad z\in\Omega_{a,b}$;
		\item $A^n=\mathcal{A}^{(n)}(0)\quad 0\in(a,b),n\in\mathbb{N}$;
		\item $(A^{\mathrm{i}z},\Omega_{-\infty,\infty})$ is a regular umbrae with index $(-\pi,\pi)$, where $0\in(a,b),\beta>0$.
	\end{enumerate}
\end{prop}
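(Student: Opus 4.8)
The plan is to read all three identities off the generalized Fourier inversion attached to Definition~\ref{genft}, using the contour rigidity of Lemma~\ref{welldeflem}, the Correspondence Theorem~\ref{corft}, and a Morera argument modeled on the proof of Theorem~\ref{intercderi}. For the first identity I would first observe that $\mathrm{eval}_A(\mathrm{e}^{wz};w)$ is, by construction, the inverse transform of $\hat{\mathcal A}$ at $z$. Fix $z\in\Omega_{a,b}$; then $w\mapsto\mathrm{e}^{wz}$ is analytic and of exponential type on every vertical strip, since $|\mathrm{e}^{wz}|=\mathrm{e}^{\mathrm{Re}(w)\mathrm{Re}\,z}\mathrm{e}^{-\mathrm{Im}(w)\mathrm{Im}\,z}$ has prefactor locally bounded in $\mathrm{Re}\,w$ and exponential rate $|\mathrm{Im}\,z|$ in $\mathrm{Im}\,w$, while $\hat{\mathcal A}(\zeta)\mathrm{e}^{\mathrm{i}\zeta z}$ is integrable on $\mathbb R-\mathrm{i}t$ for $t$ in the regular interval of $\hat A$, because by Theorem~\ref{corft} that interval contains $(-b,-a)\ni\mathrm{Im}\,z$ and $\hat{\mathcal A}$ decays exponentially there. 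Hence $w\mapsto\mathrm{e}^{wz}\in\mathcal T_A(I)$ for a suitable $I$ and, by Definition~\ref{Evaldef},
\[
\mathrm{e}^{Az}=\frac1{\sqrt{2\pi}}\int_{-\infty-\mathrm{i}t}^{\infty-\mathrm{i}t}\hat{\mathcal A}(\zeta)\,\mathrm{e}^{\mathrm{i}\zeta z}\,\mathrm d\zeta ;
\]
by Lemma~\ref{welldeflem} this integral does not depend on $t$, and Fourier inversion (directly in the generalized sense, or after the substitution bringing the contour to the real axis) identifies it with $\mathcal A(z)$.

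For the second identity, $0\in(a,b)$ gives $0\in\Omega_{a,b}$, so $\mathcal A$ is analytic at the origin and the first identity holds in a neighbourhood of $z=0$. It then suffices to differentiate $n$ times at $z=0$ and to commute $\tfrac{\mathrm d^n}{\mathrm dz^n}$ with $\mathrm{eval}_A$. This is the mechanism behind Theorem~\ref{intercderi}: $\int_{\mathbb R-\mathrm{i}t}|\hat{\mathcal A}(\zeta)\zeta^{k}|\,\mathrm d\zeta$ is finite and controlled locally uniformly under translation because $\hat{\mathcal A}$ is Schwartz on horizontal lines inside its regular interval (Lemma~\ref{rapiddec} for $\hat A$, whose regular interval contains $0$). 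Hence
\[
A^n=\mathrm{eval}_A(w^n;w)=\left.\frac{\mathrm d^n}{\mathrm dz^n}\mathcal A(z)\right|_{z=0}=\mathcal A^{(n)}(0);
\]
alternatively one cites directly that $\mathrm{i}^{\,n}(2\pi)^{-1/2}\int\hat{\mathcal A}(\zeta)\zeta^n\,\mathrm d\zeta=\mathcal A^{(n)}(0)$ and shifts the contour by Lemma~\ref{welldeflem}.

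For the third identity I would take $w^{\mathrm{i}z}$ in the principal branch; since it is analytic off $(-\infty,0]$ the integration strip must lie in $\{\mathrm{Re}\,w>0\}$, which is available exactly because $\beta>0$, so $(\alpha,\beta)\cap(0,\infty)\neq\varnothing$ and one may fix $t\in(\max(\alpha,0),\beta)$. From $|(\mathrm{i}\zeta)^{\mathrm{i}z}|=|\mathrm{i}\zeta|^{-\mathrm{Im}\,z}\mathrm{e}^{-\mathrm{Re}(z)\arg(\mathrm{i}\zeta)}$ with $\arg(\mathrm{i}\zeta)\in(-\tfrac\pi2,\tfrac\pi2)$ and the rapid decay of $\hat{\mathcal A}$ one checks $w^{\mathrm{i}z}\in\mathcal T_A(I)$ for a suitable $I\subseteq(0,\infty)$ meeting $(\alpha,\beta)$, and that the integrand $\hat{\mathcal A}(\zeta)(\mathrm{i}\zeta)^{\mathrm{i}z}$ is entire in $z$ and locally uniformly integrable; Morera's theorem (exactly as in Theorem~\ref{intercderi}) then makes $z\mapsto A^{\mathrm{i}z}$ entire, so that $(A^{\mathrm{i}z},\Omega_{-\infty,\infty})$ is an umbrae. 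To compute the index I would move to a Mellin--Barnes form: since $\mathrm{Re}(\mathrm{i}\zeta)=t>0$ one has $(\mathrm{i}\zeta)^{-s}=\Gamma(s)^{-1}\int_0^\infty r^{s-1}\mathrm{e}^{-\mathrm{i}\zeta r}\,\mathrm dr$, and Fubini together with the first identity gives, for $\mathrm{Re}\,s>0$,
\[
A^{-s}=\frac1{\Gamma(s)}\int_0^\infty r^{s-1}\mathcal A(-r)\,\mathrm dr ,
\]
the integral converging because $\beta>0$ forces exponential decay of $\mathcal A(-r)$ as $r\to\infty$ and $0\in(a,b)$ gives regularity at $r=0$. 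Writing $s=-\mathrm{i}z$, the exponential type and the regular interval $(-\pi,\pi)$ come out by combining Stirling's asymptotics for $\Gamma$ on vertical lines with the exponential decay of the Mellin transform of the analytic function $r\mapsto\mathcal A(-r)$ (i.e.\ the Paley--Wiener analyticity of $u\mapsto\mathcal A(-\mathrm{e}^u)$ in a strip around $\mathbb R$); the continuation from $\{\mathrm{Re}\,s>0\}$ to all of $\mathbb C$ is automatic from entirety, and singular $A$ reduce to this case via Theorem~\ref{decompum} and Definition~\ref{singdef}.

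The hard part is the third identity, and specifically pinning the index down to exactly $(-\pi,\pi)$: the crude bound $|(\mathrm{i}\zeta)^{\mathrm{i}z}|\le\mathrm{e}^{(\pi/2)|\mathrm{Re}\,z|}$ on a right-half-plane contour is not sharp, and the true rate only emerges after cancelling the growth $\mathrm{e}^{(\pi/2)|\mathrm{Re}\,z|}$ that Stirling assigns to $\Gamma(s)^{-1}$ against the decay of $\int_0^\infty r^{s-1}\mathcal A(-r)\,\mathrm dr$, which is governed by the distance from the singularities of $\mathcal A$ to the negative real axis. Extracting from this balance a rate valid uniformly over all admissible umbrae is the delicate point; everything else is bookkeeping with Lemma~\ref{welldeflem}, Theorem~\ref{corft}, Lemma~\ref{rapiddec} and Morera's theorem.
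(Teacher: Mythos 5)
Your treatment of the first two identities and of the singular case is sound and is essentially all the paper does explicitly: the paper's proof consists of the one-line reduction of singular umbrae to regular ones via Theorem \ref{decompum} and Definition \ref{singdef}, with the regular case read off from Definition \ref{Evaldef} exactly as you do (Fourier inversion for $\mathrm{e}^{Az}$, the moment formula plus the exponential decay of $\hat{\mathcal A}$ on the evaluation contour for $A^n$). Up to that point your proposal matches the intended argument.

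The genuine gap is in your handling of the third item, and it is twofold. First, the step that would fail: your Mellin--Barnes route needs exponential decay of $M(s)=\int_0^\infty r^{s-1}\mathcal A(-r)\,\mathrm dr$ on vertical lines, which you justify by ``Paley--Wiener analyticity of $u\mapsto\mathcal A(-\mathrm{e}^u)$ in a strip around $\mathbb R$''. But $\mathcal A$ is only analytic on the horizontal strip $\Omega_{a,b}$, and the preimage of that strip under $u\mapsto-\mathrm{e}^u$ pinches to zero width as $u\to+\infty$ (one needs $|\mathrm{Im}\,\mathrm{e}^{u+\mathrm{i}v}|$ bounded, i.e.\ $|v|\lesssim \mathrm{e}^{-u}$); there is no uniform strip, hence no exponential decay of $M$ in general. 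The Bernoulli umbrae itself shows this: $\mathrm{B}^{-s}=s\zeta(1+s)$, so $M(s)=\Gamma(s)\,s\,\zeta(1+s)$ decays only at the rate $\mathrm{e}^{-(\pi/2)|\mathrm{Im}\,s|}$ coming from $\Gamma$, and after dividing by $\Gamma(s)$ nothing is left but polynomial growth. Second, the target you set yourself --- ``pinning the index down to exactly $(-\pi,\pi)$'', i.e.\ genuine decay $\mathrm{e}^{-\pi|\mathrm{Re}\,z|}$ of $A^{\mathrm{i}z}$ in the paper's sign convention --- is not attainable: for $A=\mathrm{B}$ the function $\mathrm{B}^{\mathrm{i}z}=-\mathrm{i}z\,\zeta(1-\mathrm{i}z)$ grows polynomially along horizontal lines, and for $A=(c)$ one gets $c^{\mathrm{i}z}$, of exponential rate $|\arg c|$; in neither case is there decay at rate $\pi$. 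What the statement can deliver, and what the Carlson-type remark following it actually uses, is precisely the bound you discard as ``not sharp'': on the contour $\zeta=\xi-\mathrm{i}t_0$, $t_0\in(0,\beta)$, one has $|(\mathrm{i}\zeta)^{\mathrm{i}z}|=|\mathrm{i}\zeta|^{-\mathrm{Im}\,z}\,\mathrm{e}^{-\arg(\mathrm{i}\zeta)\mathrm{Re}\,z}\le|\mathrm{i}\zeta|^{-\mathrm{Im}\,z}\,\mathrm{e}^{(\pi/2)|\mathrm{Re}\,z|}$, and integrating against $|\hat{\mathcal A}|$ (exponentially small in both directions because $0\in(a,b)$) gives $|A^{\mathrm{i}z}|\le C(\mathrm{Im}\,z)\,\mathrm{e}^{(\pi/2)|\mathrm{Re}\,z|}$, i.e.\ exponential type strictly below $\pi$; analyticity in $z$ then follows by your Morera argument, and singular $A$ by Theorem \ref{decompum}. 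Since you explicitly leave the Stirling-versus-Mellin ``balance'' unextracted, your proposal for item 3 is incomplete even on its own terms, and the mechanism it relies on cannot be repaired to give the rate you aim for.
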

\begin{proof}
	For the singular umbrae $A$, only need to apply Theorem \ref{decompum}.
\end{proof}
\noindent
Just like Carlson's theorem shows, the function class formed by such $A^z$ is also determined by the value on natural numbers.

\subsection{Limit calculus}

Next, we need several important lemmas to help us calculate $f(A)$ specifically.
\begin{lemm}
	\label{convlem}
	Suppose $f$ is locally integrable, $b>0,n\geq 0$.
	
	If $\mathrm{e}^{-bx}(1+|x|)^n f(x)\in L^1(0,\infty)$, then $\mathrm{e}^{-bx}(1+|x|)^n \int_{0}^{x}f(t)\mathrm{d}t\in L^1(0,\infty)$.
\end{lemm}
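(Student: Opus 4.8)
The plan is to split the integral $\int_0^x f(t)\,\mathrm{d}t$ at a fixed point, say at $x/2$ (or at a constant, depending on which is cleaner), so that on each piece the decaying weight $\mathrm{e}^{-bx}$ can be compared against $\mathrm{e}^{-bt}$ and absorbed. The key mechanism is Fubini–Tonelli: writing
\begin{equation*}
	\int_0^\infty \mathrm{e}^{-bx}(1+x)^n \left| \int_0^x f(t)\,\mathrm{d}t\right| \mathrm{d}x
	\leq
	\int_0^\infty |f(t)| \left( \int_t^\infty \mathrm{e}^{-bx}(1+x)^n \,\mathrm{d}x\right) \mathrm{d}t,
\end{equation*}
and the whole lemma reduces to showing that the inner integral $\int_t^\infty \mathrm{e}^{-bx}(1+x)^n\,\mathrm{d}x$ is $\lesssim \mathrm{e}^{-bt}(1+t)^n$. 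This is the essential estimate; everything else is bookkeeping.

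First I would establish that inner bound. Since $b>0$, integration by parts $n$ times (or an elementary induction on $n$) gives $\int_t^\infty \mathrm{e}^{-bx}(1+x)^n\,\mathrm{d}x = \mathrm{e}^{-bt} \cdot p(t)$ where $p$ is a polynomial of degree $n$ in $t$ with positive coefficients depending only on $b$ and $n$; hence $\int_t^\infty \mathrm{e}^{-bx}(1+x)^n\,\mathrm{d}x \leq C_{b,n}\,\mathrm{e}^{-bt}(1+t)^n$ for all $t\geq 0$. (Alternatively, substitute $x = t+u$ and use $(1+t+u)^n \leq 2^n(1+t)^n(1+u)^n$, then the $u$-integral $\int_0^\infty \mathrm{e}^{-bu}(1+u)^n\,\mathrm{d}u$ is a finite constant.)

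Next I would assemble the argument: the hypothesis is exactly that $t \mapsto \mathrm{e}^{-bt}(1+t)^n |f(t)|$ lies in $L^1(0,\infty)$, so combining the Fubini step with the inner bound yields
\begin{equation*}
	\int_0^\infty \mathrm{e}^{-bx}(1+x)^n \left| \int_0^x f(t)\,\mathrm{d}t\right| \mathrm{d}x
	\leq
	C_{b,n} \int_0^\infty \mathrm{e}^{-bt}(1+t)^n |f(t)|\,\mathrm{d}t < \infty,
\end{equation*}
which is the claim. One should note that $\int_0^x f(t)\,\mathrm{d}t$ is well-defined and finite for each $x$ because $f$ is locally integrable, and that the use of Tonelli is legitimate since the integrand is nonnegative and measurable. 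The main obstacle — really the only point requiring care — is the polynomial-weight estimate for the tail integral; once that is in hand, the rest is a direct application of Tonelli's theorem and the hypothesis. A minor subtlety is that $|x|$ in the statement is just $x$ on $(0,\infty)$, so I would silently replace $(1+|x|)$ by $(1+x)$ throughout.
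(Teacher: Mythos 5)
Your proposal is correct and is essentially the paper's argument in unrolled form: the paper bounds $\mathrm{e}^{-bx}(1+x)^n\int_0^x|f(t)|\,\mathrm{d}t$ pointwise by the convolution $\left(\mathrm{e}^{-b(\cdot)}(1+\cdot)^n f\ast\mathrm{e}^{-b(\cdot)}(1+\cdot)^n\right)(x)$, using exactly your sub-multiplicativity estimate $1+x\leq(1+t)(1+x-t)$, and then cites Young's inequality, whose $L^1\ast L^1$ case is precisely your Tonelli computation with the tail bound $\int_t^\infty\mathrm{e}^{-bx}(1+x)^n\,\mathrm{d}x\lesssim\mathrm{e}^{-bt}(1+t)^n$. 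The two proofs differ only in packaging, so there is nothing to fix.
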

\begin{proof}
	Only need to notice that
	\begin{align*}
		\mathrm{e}^{-bx}(1+x)^n\int_{0}^{x}|f(t)|\mathrm{d}t
		&\leq
		\int_{0}^{x}\mathrm{e}^{-bt}(1+t)^n|f(t)|
		(1+x-t)^n\mathrm{e}^{-b(x-t)}\mathrm{d}t\\
		&=
		(\mathrm{e}^{-b(\cdot)}(1+\cdot)^n f(\cdot)
		\ast\mathrm{e}^{-b(\cdot)}(1+\cdot)^n)(x).
	\end{align*}
	The result is obtained from Young's inequality.
\end{proof}
The umbrae that satisfy the following properties are good enough to give a more accurate estimate for $\hat{\mathcal{A}}$, and Bernoulli umbrae $\mathrm{B}$ is such a umbrae. Ultimately, it will allow us to do calculus $f(A)$ on as many functions as possible. In fact, if there is no such fine result, we will not be able to process the Gosper series as we promised.
\begin{lemm}[Good umbrae]
	\label{goodumbrae}
	Suppose $A=(\mathcal{A},\Omega_{a,b})$ is a regular umbrae with index $(\alpha,\beta)$.
	\begin{enumerate}
		\item If $b<\infty$ and $(\mathcal{A}(z)
		\prod_{j=1}^{m}(z-b_j)^{l_j},\Omega_{a,b+\varepsilon})$ is a  umbrae, then we have
		\begin{align}
			\label{cond+}
			|\hat{\mathcal{A}}(\xi-\mathrm{i}t)|\leq
			C(t)\mathrm{e}^{-b\xi}(1+|\xi|)^{l-1}
			\quad \xi\geq 0,
		\end{align}
		where $l=\max\{l_1,\ldots,l_m\},-\mathrm{Im} b_j=b$;
		\item If $a>-\infty$ and $(\mathcal{A}(z)
		\prod_{j=1}^{n}(z-a_j)^{k_j},\Omega_{a-\varepsilon,b})$ is a  umbrae, then we have
		\begin{align}
			\label{cond-}
			|\hat{\mathcal{A}}(\xi-\mathrm{i}t)|\leq
			C(t)\mathrm{e}^{-a\xi}(1+|\xi|)^{k-1}
			\quad \xi\leq 0,
		\end{align}
		where $k=\max\{k_1,\ldots,k_n\},-\mathrm{Im} a_j=a$.
	\end{enumerate}
\end{lemm}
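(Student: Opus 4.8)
The plan is to reduce the two claims to a single statement about the decay of the Fourier transform of a function with a known pole structure, handle only case~1 in detail (case~2 being the mirror image under $z\mapsto -z$), and extract the polynomial-times-exponential bound by integrating by parts / using Lemma~\ref{convlem}. Set $g(z)\coloneqq\mathcal{A}(z)\prod_{j=1}^m(z-b_j)^{l_j}$; by hypothesis $G\coloneqq(g,\Omega_{a,b+\varepsilon})$ is a umbrae, so on the line $\mathbb{R}-\mathrm{i}t$ with $t$ slightly larger than $b$ we have $|g(x-\mathrm{i}t)|\le C(t)\mathrm{e}^{s|x|}$, and more importantly the positive index of $G$ is at most $-(b+\varepsilon)<-b$, so $\mathrm{e}^{(b+\varepsilon')x}g(x-\mathrm{i}t)$ is bounded for $x\ge 0$ for some $\varepsilon'>0$. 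On the other hand $\mathcal{A}=g\cdot\prod_j(z-b_j)^{-l_j}$, and on the line $\mathrm{Im} z=-t$ the factor $\prod_j(z-b_j)^{-l_j}$ is a smooth bounded function of $\xi=\mathrm{Re} z$ decaying like $|\xi|^{-(l_1+\cdots+l_m)}$, with all its derivatives bounded and decaying.

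The main step is to push the contour of the Fourier integral $\hat{\mathcal{A}}(\xi-\mathrm{i}t)$ from $\mathbb{R}-\mathrm{i}t$ down to $\mathbb{R}-\mathrm{i}t'$ with $b<t<t'<b+\varepsilon$ — legitimate by Lemma~\ref{welldeflem} applied to $\mathcal{A}(z)\mathrm{e}^{-\mathrm{i}(\xi-\mathrm{i}t)z}$, provided we stay inside $\Omega_{a,b+\varepsilon}$ and avoid the poles $b_j$, which have imaginary part exactly $-b$ and so lie strictly above the new contour. Writing out Definition~\ref{genft} on that lower line produces a factor $\mathrm{e}^{-t'\xi}$ in front, already giving the $\mathrm{e}^{-b\xi}$-type exponential decay (any $t'<b+\varepsilon$ works, so $\mathrm{e}^{-b\xi}$ is certainly dominated; one keeps $t'$ fixed and absorbs $\mathrm{e}^{(t'-b)\xi}$... actually one integrates the \emph{original} line against a suitable test factor — see below). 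The polynomial factor $(1+|\xi|)^{l-1}$ with $l=\max_j l_j$ then comes from the pole structure: to see it, note $\hat{\mathcal{A}}(\xi-\mathrm{i}t)=\widehat{g}\,*\,\widehat{P}$ where $P(z)=\prod_j(z-b_j)^{-l_j}$, and $\widehat{P}$ is (a sum of) terms of the form $\xi^{l_j-1}\mathrm{e}^{-b_j\xi}\mathbf{1}_{\xi\gtrless 0}$ by the standard residue computation for inverse Fourier transforms of rational functions with a single pole of order $l_j$; convolving the rapidly decaying $\widehat{g}$ (rapidly decaying because $G$ is a regular umbrae, cf.\ Lemma~\ref{rapiddec}, once we note $0$ can be placed in its regular interval after a shift) against $\xi^{l-1}\mathrm{e}^{-b\xi}$ and invoking Lemma~\ref{convlem} yields exactly the claimed $\mathrm{e}^{-b\xi}(1+|\xi|)^{l-1}$ envelope for $\xi\ge 0$, with a locally bounded constant $C(t)$ in $t$.

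I expect the main obstacle to be bookkeeping rather than conceptual: one must be careful that the multiple poles $b_1,\dots,b_m$ all sit on the same horizontal line $\mathrm{Im} z=-b$ but at different real parts, so the partial-fraction decomposition of $P$ is a finite sum $\sum_{j}\sum_{i=1}^{l_j} c_{j,i}(z-b_j)^{-i}$ and each inverse transform contributes $\mathrm{e}^{-\mathrm{Re}(\mathrm{i}b_j)\xi}$ — but $\mathrm{Re}(\mathrm{i}b_j)=-\mathrm{Im} b_j=b$ for every $j$, so all the exponential rates coincide and only the largest power $l-1$ survives in the polynomial prefactor. The other delicate point is justifying the contour shift and the convolution identity simultaneously with the exponential-type hypothesis on $g$, so that $\widehat g$ genuinely lies in $\mathcal{S}(\mathbb{R})$ on the relevant line; this is where Lemma~\ref{rapiddec} and Theorem~\ref{corft} do the work, after translating $g$ by a real constant to bring $0$ into its regular interval. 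Once these are in place, the estimate~\eqref{cond+} is immediate, and~\eqref{cond-} follows by applying the same argument to $\mathcal{A}(-z)$, whose poles $-a_j$ have imaginary part $-(-a)$, swapping the roles of $a$ and $b$.
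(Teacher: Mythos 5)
Your core strategy does work, but it is mechanically different from the paper's. The paper's (very terse) proof isolates the \emph{additive} singular part: write $\mathcal{A}$ as the sum of its principal parts at the boundary poles $b_j$ plus a remainder analytic on the enlarged strip $\Omega_{a,b+\varepsilon}$; the Fourier transform of each principal-part term $(z-b_j)^{-i}$ is computed by residues and gives exactly $\mathrm{e}^{-b\xi}(1+|\xi|)^{i-1}$ on $\xi\geq 0$ (since $-\mathrm{Im}\,b_j=b$ for all $j$), while the remainder's transform decays at the strictly faster rate $\mathrm{e}^{-(b+\varepsilon')\xi}$ coming from the wider strip. You instead split \emph{multiplicatively}, $\mathcal{A}=g\cdot P$ with $P=\prod_j(z-b_j)^{-l_j}$, and use $\hat{\mathcal{A}}=\tfrac{1}{\sqrt{2\pi}}\,\hat{g}\ast\hat{P}$; the residue computation then lives in $\hat{P}$, and the envelope $\mathrm{e}^{-b\xi}(1+|\xi|)^{l-1}$ survives convolution because $\hat g$ decays at a rate strictly larger than $b$ on $\xi\geq 0$. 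Both routes hinge on the same two facts (boundary residues give the envelope; the enlarged strip gives strictly better decay for the rest), so the difference is one of bookkeeping: your convolution route avoids discussing whether the remainder $\mathcal{A}-S$ is itself a tractable umbrae, at the cost of justifying the product-to-convolution identity and the conditional convergence of $\hat P$ for simple poles (Gauss--Weierstrass regularization, as elsewhere in the paper, handles this).

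Three points in your write-up need repair. First, the statement that ``the positive index of $G$ is at most $-(b+\varepsilon)$, so $\mathrm{e}^{(b+\varepsilon')x}g(x-\mathrm{i}t)$ is bounded for $x\geq 0$'' confuses $G$ with $\hat G$: the index of $G$ concerns growth of $g$ in $x$ and is essentially the same $(\alpha,\beta)$ as for $\mathcal{A}$ (for Bernoulli, $g$ grows polynomially, it does not decay). What you actually need, and what is true, is that Theorem \ref{corft} applied to $G$ on the \emph{enlarged dominating strip} $\Omega_{a,b+\varepsilon}$ gives $|\hat g(\xi-\mathrm{i}s)|\leq C(s)\mathrm{e}^{-(b+\varepsilon')\xi}$ for $\xi\geq 0$; for the same reason the claim that $\hat g\in\mathcal{S}(\mathbb{R})$ via Lemma \ref{rapiddec} is both unnecessary and false in general (for $\xi\leq 0$ one only has exponential control at some rate $s'\in(-b,-a)$, which suffices for the tail of the convolution precisely because $a<b$). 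Note also that ``rapid decay'' alone would not do: if $\hat g$ decayed only at rate $\mathrm{e}^{-b\xi}$ the convolution would cost an extra factor of $|\xi|$ and you would get $(1+|\xi|)^{l}$ instead of $(1+|\xi|)^{l-1}$, so the rate $b+\varepsilon'$ is load-bearing and should be stated as such. Second, your first ``main step'' (pushing the contour from inside $\Omega_{a,b}$ down past $\mathrm{Im}\,z=-b$ to $t'\in(b,b+\varepsilon)$) is wrong as written: the poles sit on the line you cross, so Lemma \ref{welldeflem} does not apply and the crossing picks up exactly the residue terms you omit --- ironically, keeping those residues would turn this abandoned step into the paper's own proof. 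Third, Lemma \ref{convlem} is an $L^1$ statement and is not the estimate you need; the required bound is a pointwise one on $(\hat g\ast\hat P)(\xi)$, obtained by splitting the convolution at $\omega=\xi$ and tracking the exponential rates explicitly. With these corrections your argument closes.
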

\begin{proof}
	Consider the Fourier transform of the singular part of $\hat{\mathcal{A}}$ at the pole
\end{proof}
\noindent
It seems that this lemma can be generalized further, but the current form is sufficient for us.
\begin{them}[Limit calculus]
	\label{Limitcalculus}
	Suppose $A$ is a regular umbrae, $(f_n)_{n\in\mathbb{N}}$ is a sequence in $\mathcal{T}_A(I)$, and $f\in\mathcal{T}_A(I), t\in I$.
	
	If $\hat{\mathcal{A}}(z)f_n(\mathrm{i}z)$ tends to $\hat{\mathcal{A}}(z)f(\mathrm{i}z)$ in the sense of $L^1(\mathbb{R}-\mathrm{i}t)$, then
	\begin{equation}
		\lim\limits_{n\to\infty}f_n(A)=f(A).
	\end{equation}
\end{them}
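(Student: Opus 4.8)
The plan is to read the conclusion off the integral representation supplied by Definition \ref{Evaldef} and to control the difference $f_n(A)-f(A)$ directly by the $L^1$ distance of the two integrands. First I would fix the horizontal line: since $f_n,f\in\mathcal{T}_A(I)$, each of $\hat{\mathcal{A}}(z)f_n(\mathrm{i}z)$ and $\hat{\mathcal{A}}(z)f(\mathrm{i}z)$ is Lebesgue integrable on $\mathbb{R}-\mathrm{i}s$ for every $s\in I\cap(\alpha,\beta)$, and by Lemma \ref{welldeflem} the value of the evaluation functional is independent of that choice; hence we may compute everything on the single line $\mathbb{R}-\mathrm{i}t$ on which the $L^1$ convergence is hypothesised, and there
\begin{equation*}
	f_n(A)=\int_{-\infty-\mathrm{i}t}^{\infty-\mathrm{i}t}\hat{\mathcal{A}}(z)f_n(\mathrm{i}z)\,\mathrm{d}z,\qquad
	f(A)=\int_{-\infty-\mathrm{i}t}^{\infty-\mathrm{i}t}\hat{\mathcal{A}}(z)f(\mathrm{i}z)\,\mathrm{d}z.
\end{equation*}

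The second step is the triangle inequality for integrals: subtracting the two displays gives
\begin{equation*}
	|f_n(A)-f(A)|\le\int_{-\infty-\mathrm{i}t}^{\infty-\mathrm{i}t}\bigl|\hat{\mathcal{A}}(z)f_n(\mathrm{i}z)-\hat{\mathcal{A}}(z)f(\mathrm{i}z)\bigr|\,|\mathrm{d}z|=\Vert\hat{\mathcal{A}}(\cdot)f_n(\mathrm{i}\cdot)-\hat{\mathcal{A}}(\cdot)f(\mathrm{i}\cdot)\Vert_{L^1(\mathbb{R}-\mathrm{i}t)},
\end{equation*}
and the right-hand side tends to $0$ as $n\to\infty$ by hypothesis, which is exactly the assertion.

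There is essentially no obstacle here; the statement is little more than the continuity of the Lebesgue integral in the $L^1$ norm, and the only point that needs a word of justification is the line-independence of $\mathrm{eval}_A$, which is precisely Lemma \ref{welldeflem} (and the reason the conclusion does not depend on which admissible $t$ one uses to test the convergence). If one wished, the same computation combined with the Gauss–Weierstrass regularization \eqref{GWsummation} would extend the result to the case where $L^1$ convergence of $\mathrm{e}^{-\varepsilon z^2}\hat{\mathcal{A}}(z)f_n(\mathrm{i}z)$ is only available after the regularizing factor is inserted, but that refinement is not needed for the applications in Section 3.
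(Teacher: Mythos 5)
Your argument is correct: the paper states Theorem \ref{Limitcalculus} without proof, treating it as immediate, and your computation---fixing an admissible line via Lemma \ref{welldeflem} and bounding $|f_n(A)-f(A)|$ by the $L^1(\mathbb{R}-\mathrm{i}t)$ distance of the integrands---is exactly the intended justification. Note only that for the hypothesis to make sense $t$ must in fact lie in $I\cap(\alpha,\beta)$ (where $\hat{\mathcal{A}}$ is defined), which is the line you use, so there is no gap.
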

\begin{corr}[Interchangeability]
	\label{Interchangeability}
	Suppose $A=(\mathcal{A},\Omega_{a,b})$ is a umbrae with index $(\alpha,\beta)$, and satisfy $0\in(a,b)$.
	
	For $f(z)=\sum_{n=0}^{\infty}\frac{a_n z^n}{n!}$ we have
	\begin{enumerate}
		\item If there exists $r<(-a)\vee b$ such that
		\begin{equation}
			\sum_{n=0}^{\infty}|a_n| r^{-n}
			< \infty,
		\end{equation}
		then $f(A)=\sum_{n=0}^{\infty}\frac{a_n A^n}{n!}$;
		\item If $A$ satisfied one of the conditions in Lemma \ref{goodumbrae}, and $f$ satisfies
		\begin{align*}
			\sum_{n=0}^{\infty}|b|^{-n}n^{l-1}|a_n|
			&< \infty\\
			\sum_{n=0}^{\infty}|a|^{-n}n^{k-1}|a_n|
			&< \infty,
		\end{align*}
		then $f(A)=\sum_{n=0}^{\infty}\frac{a_n A^n}{n!}$.
	\end{enumerate}
\end{corr}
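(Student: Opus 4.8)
The plan is to prove Corollary \ref{Interchangeability} as a direct application of Theorem \ref{Limitcalculus} together with the pointwise bounds on $\hat{\mathcal{A}}$. Write $f_N(z)=\sum_{n=0}^{N}\frac{a_n z^n}{n!}$ for the partial sums; each $f_N$ is a polynomial, hence trivially exponential type, and the tail $f-f_N$ is again analytic of exponential type on all of $\mathbb{C}$, so there is no difficulty checking $f_N,f\in\mathcal{T}_A(I)$ once we know the relevant $L^1$ integrability. By linearity of $\mathrm{eval}_A$ on polynomials (which follows from Definition \ref{Evaldef}) we have $f_N(A)=\sum_{n=0}^{N}\frac{a_n A^n}{n!}$, so the corollary reduces to showing $\hat{\mathcal{A}}(z)f_N(\mathrm{i}z)\to\hat{\mathcal{A}}(z)f(\mathrm{i}z)$ in $L^1(\mathbb{R}-\mathrm{i}t)$ for a suitable horizontal line $t\in I\cap(\alpha,\beta)$, and then invoking Theorem \ref{Limitcalculus}. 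This amounts to the dominated-convergence statement $\int_{\mathbb{R}-\mathrm{i}t}|\hat{\mathcal{A}}(z)|\,|f(\mathrm{i}z)-f_N(\mathrm{i}z)|\,|dz|\to 0$.

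For part (1), I would choose $t=0$ (legitimate since $0\in(a,b)$ and, after passing to the regular decomposition via Theorem \ref{decompum}, $0$ lies in the regular interval). On the line $\mathbb{R}$, Theorem \ref{corft} tells us $\hat{\mathcal{A}}$ is the generating function of a regular umbrae whose regular interval contains $(-b,-a)$; concretely this gives $|\hat{\mathcal{A}}(\xi)|\le C\mathrm{e}^{-\sigma|\xi|}$ for any $\sigma$ with $-b<-\sigma$ and $\sigma<a$ appropriately — i.e. $\hat{\mathcal{A}}$ decays exponentially at rate just below $(-a)\vee b$ on each side. The hypothesis $\sum|a_n|r^{-n}<\infty$ with $r<(-a)\vee b$ means $|f(\mathrm{i}\xi)-f_N(\mathrm{i}\xi)|\le\sum_{n>N}\frac{|a_n|}{n!}|\xi|^n$, and this power series in $|\xi|$ grows no faster than $\mathrm{e}^{|\xi|/r}$ (roughly) while being a tail that $\to 0$ uniformly on compacts; pairing it with the $\mathrm{e}^{-\sigma|\xi|}$ decay of $|\hat{\mathcal{A}}|$ for $\sigma$ strictly between $r$ and $(-a)\vee b$ makes the product integrable with an integrable dominating function independent of $N$, so dominated convergence applies.

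For part (2), the umbrae satisfies one of the conditions of Lemma \ref{goodumbrae}, so instead of exponential decay of $\hat{\mathcal{A}}$ strictly inside the regular interval we have the sharper boundary estimates \eqref{cond+}–\eqref{cond-}: $|\hat{\mathcal{A}}(\xi-\mathrm{i}t)|\le C(t)\mathrm{e}^{-b\xi}(1+|\xi|)^{l-1}$ for $\xi\ge 0$ and the symmetric bound for $\xi\le 0$. Now on the real line the relevant decay rates are exactly $b$ (as $\xi\to+\infty$) and $a$ (as $\xi\to-\infty$), up to polynomial factors of degree $l-1$, resp. $k-1$. The split hypotheses $\sum|b|^{-n}n^{l-1}|a_n|<\infty$ and $\sum|a|^{-n}n^{k-1}|a_n|<\infty$ are precisely what is needed so that $\int_0^\infty\mathrm{e}^{-b\xi}(1+\xi)^{l-1}\sum_{n>N}\frac{|a_n|}{n!}\xi^n\,d\xi$ and the corresponding integral over $(-\infty,0)$ are finite and tend to $0$: estimating $\int_0^\infty\mathrm{e}^{-b\xi}\xi^{n+l-1}\,d\xi=\Gamma(n+l)b^{-(n+l)}$ and using $\Gamma(n+l)/n!\lesssim n^{l-1}$ converts the integrated tail into $\sum_{n>N}n^{l-1}|b|^{-n}|a_n|$, a convergent tail. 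So again we get a uniform integrable majorant and dominated convergence gives the $L^1$ convergence, whence Theorem \ref{Limitcalculus} finishes it.

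The main obstacle is bookkeeping at the boundary in part (2): one must be careful that the decay rate $b$ in \eqref{cond+} is matched against $|b|^{-n}$ in the hypothesis with the correct handling of the polynomial weight $(1+|\xi|)^{l-1}$ and the $\Gamma$-to-factorial comparison, and that the case $b=\infty$ (resp.\ $a=-\infty$) — where one side needs no growth condition at all — is treated correctly by reverting to the argument of part (1) on that side. One also has to confirm that the chosen contour $t$ can be taken on the boundary line $\mathbb{R}$ consistently for both the $\xi\ge 0$ and $\xi\le 0$ estimates, which is where the hypothesis $0\in(a,b)$ is used together with Theorem \ref{decompum} to reduce to the regular case. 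Everything else is routine application of the already-established machinery.
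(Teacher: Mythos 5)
Your proposal matches the paper's intended argument: the paper's entire proof of this corollary is the one-line reduction of the singular case via Theorem \ref{decompum}, with the regular case left implicitly to exactly the machinery you spell out — polynomial partial sums, the decay of $\hat{\mathcal{A}}$ coming from Theorem \ref{corft} for part (1) and from the bounds \eqref{cond+}--\eqref{cond-} of Lemma \ref{goodumbrae} for part (2), followed by Theorem \ref{Limitcalculus}. One caution: Theorem \ref{corft} yields decay at rate just under $b$ as $\xi\to+\infty$ and just under $-a$ as $\xi\to-\infty$, so your claim of decay ``at rate just below $(-a)\vee b$ on each side'' is not what it gives, and the two-sided integrable majorant in part (1) actually requires $r<(-a)\wedge b$ — which is what the dominated-convergence step needs and all that is used later, since for the Bernoulli umbrae $-a=b=2\pi$.
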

\begin{proof}
	For the singular umbrae $A$, only need to apply Theorem \ref{decompum}.
\end{proof}
\noindent
This corollary will eventually provide us the interchangeable results which we care about.
\begin{defi}[Hierarchy]
	\label{hierarchydef}
	If $f$ is defined on $\{z\in\mathbb{C}:\mathrm{Re} z>0\}$ and satisfies the following properties
	\begin{enumerate}
		\item For $\mathrm{Re} t>0$ we have
		\begin{align*}
			|f(t+\mathrm{i}\xi)|
			\mathrm{e}^{-b\xi}(1+|\xi|)^{l}
			&\in
			L^1((0,\infty);\xi)\\
			\lim\limits_{\mathrm{Re}t\to\infty}
			\int_{-\infty}^{\infty}
			\left |
			f(t+\mathrm{i}\xi)-
			\sum_{k=0}^{p-1}\frac{f^{(k)}(t)}{k!}(\mathrm{i}\xi)^k
			\right |
			\mathrm{e}^{-b\xi}(1+|\xi|)^{l}
			\mathrm{d}\xi
			&=0,
		\end{align*}
		locally uniformly with respect to $\mathrm{Im}t$;
		\item For $\mathrm{Re} t>0$ we have
		\begin{align*}
			|f(t+\mathrm{i}\xi)|
			\mathrm{e}^{-a\xi}(1+|\xi|)^{k}
			&\in
			L^1((-\infty,0);\xi)\\
			\lim\limits_{\mathrm{Re}t\to\infty}
			\int_{-\infty}^{\infty}
			\left |
			f(t+\mathrm{i}\xi)-
			\sum_{k=0}^{p-1}\frac{f^{(k)}(t)}{k!}(\mathrm{i}\xi)^k
			\right |
			\mathrm{e}^{-a\xi}(1+|\xi|)^{k}
			\mathrm{d}\xi
			&=0,
		\end{align*}
		locally uniformly with respect to $\mathrm{Im}t$;
		\item $|f(z)|\leq C(\mathrm{Re}z)\cdot\mathrm{e}^{s|\mathrm{Im}z|}$, where $C$ is locally bounded,
	\end{enumerate}
	then the space formed by such functions is denoted by $\mathcal{T}_{a,k;b,l}^{(p-1)}$.
\end{defi}
\noindent
In fact, we only need $t$ to go to infinity along the natural number.
\begin{them}[Hierarchy]
	\label{hierarchy}
	If $0\in(a,b)$ and $k,l\geq 0$, then
	\begin{equation}
		f(z)\in\mathcal{T}_{a,k;b,l}^{(p-1)}
		\Rightarrow
		\int_{1}^{x}f(t)\mathrm{d}t\in\mathcal{T}_{a,k;b,l}^{(p)}.
	\end{equation}
\end{them}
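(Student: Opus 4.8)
The plan is to put $F(x):=\int_1^x f(t)\,\mathrm{d}t$ — a well-defined analytic function on the simply connected domain $\{\mathrm{Re}\,z>0\}$, on which $f$ is analytic — and to verify the three clauses of Definition \ref{hierarchydef} for $F$ with $p$ in place of $p-1$. Two reductions are immediate: $F'=f$, hence $F^{(j)}=f^{(j-1)}$ for all $j\ge 1$; and the growth bound (clause 3) for $F$ is obtained by integrating $|f(z)|\le C(\mathrm{Re}\,z)\mathrm{e}^{s|\mathrm{Im}\,z|}$ along the path $1\to\mathrm{Re}\,z\to z$, the horizontal leg contributing a locally bounded function of $\mathrm{Re}\,z$ and the vertical leg contributing at most $C(\mathrm{Re}\,z)\,s^{-1}(\mathrm{e}^{s|\mathrm{Im}\,z|}-1)$, so that $|F(z)|\le\tilde C(\mathrm{Re}\,z)\mathrm{e}^{s|\mathrm{Im}\,z|}$ with $\tilde C$ locally bounded.

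The heart of the matter is the identity that the order-$p$ Taylor remainder of $F$ at $t$ is the vertical line integral of the order-$(p-1)$ Taylor remainder of $f$ at $t$:
\[
F(t+\mathrm{i}\xi)-\sum_{j=0}^{p}\frac{F^{(j)}(t)}{j!}(\mathrm{i}\xi)^j
=\int_0^{\xi}\mathrm{i}\left(f(t+\mathrm{i}\eta)-\sum_{j=0}^{p-1}\frac{f^{(j)}(t)}{j!}(\mathrm{i}\eta)^j\right)\mathrm{d}\eta .
\]
This follows from $F(t+\mathrm{i}\xi)-F(t)=\int_0^{\xi}\mathrm{i}\,f(t+\mathrm{i}\eta)\,\mathrm{d}\eta$ (the fundamental theorem of calculus along the segment, valid by analyticity), combined with $F^{(j)}=f^{(j-1)}$ and termwise integration of the polynomial, using $\int_0^{\xi}\mathrm{i}\,(\mathrm{i}\eta)^m/m!\,\mathrm{d}\eta=(\mathrm{i}\xi)^{m+1}/(m+1)!$.

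Granting the identity, clauses 1 and 2 for $F$ reduce to those for $f$ via Lemma \ref{convlem} and Young's inequality. I treat clause 1 (the positive-$\xi$ side, governed by $b>0$ and $l$); clause 2 is handled symmetrically after the reflection $\xi\mapsto-\xi$, which turns the weight $\mathrm{e}^{-a\xi}(1+|\xi|)^k$ on $(-\infty,0)$ into $\mathrm{e}^{-|a|\xi}(1+\xi)^k$ on $(0,\infty)$ with $|a|>0$ — this is precisely where the hypothesis $0\in(a,b)$ (i.e.\ $a<0<b$) enters. Fix $\mathrm{Re}\,t>0$. Since $b>0$, each Taylor-polynomial term is integrable against $\mathrm{e}^{-b\eta}(1+|\eta|)^l$ on $(0,\infty)$, so by clause 1 for $f$ the map $\eta\mapsto\bigl|f(t+\mathrm{i}\eta)-\sum_{j\le p-1}\tfrac{f^{(j)}(t)}{j!}(\mathrm{i}\eta)^j\bigr|\mathrm{e}^{-b\eta}(1+|\eta|)^l$ is in $L^1(0,\infty)$; Lemma \ref{convlem} with $n=l$ then puts $\mathrm{e}^{-b\xi}(1+|\xi|)^l$ times the integral over $[0,\xi]$ of its modulus into $L^1(0,\infty)$, which by the remainder identity is exactly the $L^1(0,\infty)$-membership required of the $F$-remainder; the companion statement $|F(t+\mathrm{i}\xi)|\mathrm{e}^{-b\xi}(1+|\xi|)^l\in L^1(0,\infty)$ follows the same way from $F(t+\mathrm{i}\xi)=F(t)+\int_0^{\xi}\mathrm{i}\,f(t+\mathrm{i}\eta)\,\mathrm{d}\eta$ (and $b>0$). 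For the vanishing of the limit, the proof of Lemma \ref{convlem} gives, on $(0,\infty)$, the pointwise bound
\[
\Bigl|F(t+\mathrm{i}\xi)-\sum_{j\le p}\tfrac{F^{(j)}(t)}{j!}(\mathrm{i}\xi)^j\Bigr|\mathrm{e}^{-b\xi}(1+\xi)^l\le(\phi_t\ast\psi)(\xi),
\]
with $\psi(\eta)=\mathrm{e}^{-b\eta}(1+\eta)^l\mathbf{1}_{\{\eta>0\}}$ and $\phi_t(\eta)=\bigl|f(t+\mathrm{i}\eta)-\sum_{j\le p-1}\tfrac{f^{(j)}(t)}{j!}(\mathrm{i}\eta)^j\bigr|\mathrm{e}^{-b\eta}(1+\eta)^l\mathbf{1}_{\{\eta>0\}}$. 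Young's inequality gives $\|\phi_t\ast\psi\|_{L^1(0,\infty)}\le\|\psi\|_{L^1(0,\infty)}\|\phi_t\|_{L^1(0,\infty)}$, where $\|\psi\|_{L^1(0,\infty)}<\infty$ is a fixed constant and $\|\phi_t\|_{L^1(0,\infty)}\to 0$ as $\mathrm{Re}\,t\to\infty$, locally uniformly in $\mathrm{Im}\,t$, by clause 1 for $f$. Assembling the three verified clauses gives $F\in\mathcal{T}^{(p)}_{a,k;b,l}$.

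The one delicate piece of bookkeeping — which I would flag as the ``hard'' part, although it is not deep — is keeping the Taylor-polynomial corrections aligned: one must see both that they integrate against the weights $\mathrm{e}^{-b\xi}(1+|\xi|)^l$ on $(0,\infty)$ and $\mathrm{e}^{-a\xi}(1+|\xi|)^k$ on $(-\infty,0)$ (which is what forces $a<0<b$), and that the index shift $F^{(j)}=f^{(j-1)}$ matches the termwise integration exactly, so the order-$p$ remainder of $F$ really is the vertical integral of the order-$(p-1)$ remainder of $f$. Once that identity is pinned down, the rest is a mechanical invocation of Lemma \ref{convlem} together with Young's inequality, and the local uniformity in $\mathrm{Im}\,t$ carries over automatically because the convolution kernel $\psi$ does not depend on $t$.
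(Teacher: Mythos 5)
Your proposal is correct and follows essentially the same route as the paper: the central step in both is the identity expressing the order-$p$ remainder of $F(t+\mathrm{i}\xi)$ as $\int_0^\xi$ of the order-$(p-1)$ remainder of $f$, followed by the convolution/Young bound of Lemma \ref{convlem} to get both the $L^1$ membership and the vanishing of the limit, with the negative-$\xi$ side handled symmetrically using $a<0<b$. The only difference is that you spell out the verification of the remaining clauses (the $L^1$ bound on $F$ itself and the exponential-type growth), which the paper leaves implicit.
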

\begin{proof}
	Denote $F(z)=\int_{1}^{x}f(t)\mathrm{d}t$. Notice that
	\begin{align*}
		\left\lVert 
		\mathrm{e}^{-b\xi}(1+|\xi|)^{l}
		\left(
		F(t+\mathrm{i}\xi)-
		\sum_{k=0}^{p}\frac{F^{(k)}(t)}{k!}(\mathrm{i}\xi)^k
		\right)
		\right\rVert_{L^1(0,\infty)}
		&=\\
		\left\lVert 
		\mathrm{e}^{-b\xi}(1+|\xi|)^{l}
		\int_{0}^{\xi}
		\left(
		f(t+\mathrm{i}\xi_0)-
		\sum_{k=0}^{p-1}\frac{f^{(k)}(t)}{k!}(\mathrm{i}\xi_0)^k
		\right)\mathrm{d}\xi_0
		\right\rVert_{L^1(0,\infty)}
		&\leq\\
		\left\lVert
		\mathrm{e}^{-b\xi}(1+|\xi|)^{l}
		\right\rVert_{L^1(0,\infty)}
		\cdot
		\left\lVert 
		\mathrm{e}^{-b\xi}(1+|\xi|)^{l}
		\left(
		f(t+\mathrm{i}\xi)-
		\sum_{k=0}^{p-1}\frac{f^{(k)}(t)}{k!}(\mathrm{i}\xi)^k
		\right)
		\right\rVert_{L^1(0,\infty)}
		&\to 0,
	\end{align*}
	Repeat the same method to estimate the other half and we can conclude that \eqref{hierarchy} holds.
\end{proof}
\noindent
Similar to Theorem \ref{calu}, this theorem will help us calculate $f(\mathrm{B})$ in a more direct way.

\section{Application}

Let’s first redeem the promise and see how the MS-fractional summation related to the Bernoulli umbrae.

Although the selection of $\Omega_{a,b}$ does not need to include the origin, here we still only care about the Bernoulli umbrae that includes the origin, which allows us to define $\mathrm{B}^n$. Some conclusions can be generalized to other Bernoulli umbras.
\begin{lemm}
	$\mathrm{B}+\Delta=1+\mathrm{D}.$
\end{lemm}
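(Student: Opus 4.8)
The plan is to verify the identity directly at the level of generating functions, where it reduces to the elementary cancellation $\frac{z\mathrm{e}^z}{\mathrm{e}^z-1}\cdot(\mathrm{e}^z-1)=z\mathrm{e}^z$. Writing $\mathcal{B}(z)=\frac{z\mathrm{e}^z}{\mathrm{e}^z-1}$ for the generating function of $\mathrm{B}$, the definition of the umbrae sum gives that $\mathrm{B}+\Delta$ has generating function $\mathcal{B}(z)(\mathrm{e}^z-1)$ on $\Omega_{-2\pi,2\pi}\cap\mathbb{C}=\Omega_{-2\pi,2\pi}$, while $1+\mathrm{D}$ --- recalling that the complex number $1$ is identified with the umbrae $(\mathrm{e}^z,\mathbb{C})$ --- has generating function $\mathrm{e}^z\cdot z=z\mathrm{e}^z$ on $\mathbb{C}$. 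So the task splits into checking $\mathcal{B}(z)(\mathrm{e}^z-1)=z\mathrm{e}^z$ on $\Omega_{-2\pi,2\pi}$, and then reconciling the two dominating intervals.

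For the first point I would recall that $z=0$ is a removable singularity of $\mathcal{B}$, with $\mathcal{B}(0)=1$ (from $\mathcal{B}(z)=1+\frac{z}{2}+O(z^2)$), the remaining poles $z=\pm 2\pi\mathrm{i}$ lying on the boundary of $\Omega_{-2\pi,2\pi}$; hence $\mathcal{B}$, and therefore $\mathcal{B}(z)(\mathrm{e}^z-1)$, is analytic on all of $\Omega_{-2\pi,2\pi}$. On $\Omega_{-2\pi,2\pi}\setminus\{0\}$ the product equals $z\mathrm{e}^z$ by direct cancellation, and since an analytic function is continuous and $\Omega_{-2\pi,2\pi}\setminus\{0\}$ is dense, the equality extends to $z=0$ as well. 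Thus $\mathrm{B}+\Delta=(z\mathrm{e}^z,\Omega_{-2\pi,2\pi})$ and $1+\mathrm{D}=(z\mathrm{e}^z,\mathbb{C})$, and a quick growth estimate shows both carry the index $(1,1)$.

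The only remaining point is one of convention: the two pairs differ solely in their dominating interval, $\Omega_{-2\pi,2\pi}$ against $\mathbb{C}$. Since $z\mathrm{e}^z$ is entire, $(z\mathrm{e}^z,\Omega_{-2\pi,2\pi})$ and $(z\mathrm{e}^z,\mathbb{C})$ have the same generating function up to analytic continuation and the same index, so under the identification of umbrae that coincide after passage to their common maximal domain of analyticity they are the same; in any case they induce the same evaluation functional on every function class on which both are defined, which is all the later applications use. I do not expect a genuine obstacle here --- the only care needed is the honest treatment of the removable singularity at the origin and of the enlargement of the dominating interval. (Equivalently, one could unwind the identity on monomials via Corollary \ref{specficcalu}, where $f(\Delta+z)=f(z+1)-f(z)$ and $f(\mathrm{D}+z)=f'(z)$ turn it into the Bernoulli recurrence $(\mathrm{B}+1)^n-\mathrm{B}^n=n$ for $n\geq1$, but the generating-function computation is shorter and more transparent.)
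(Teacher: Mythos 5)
Your proposal is correct, and it is exactly the immediate generating-function verification ($\frac{z\mathrm{e}^z}{\mathrm{e}^z-1}\cdot(\mathrm{e}^z-1)=z\mathrm{e}^z$, with the removable singularity at $0$) that the paper leaves implicit, since the lemma is stated there without proof. The one genuine subtlety is the mismatch of dominating intervals, $(z\mathrm{e}^z,\Omega_{-2\pi,2\pi})$ versus $(z\mathrm{e}^z,\mathbb{C})$, and you handle it in the only way the paper's subsequent use requires: the identity enters Theorem \ref{MScomp} solely through evaluation (equivalently through $(\mathrm{B}+1)[-]\mathrm{B}=1+\mathrm{D}$ on $\Omega_{-2\pi,2\pi}$), where both sides induce the same evaluation functional, so reading the equality up to restriction of the dominating interval is exactly the intended sense.
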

\begin{them}
	\label{MScomp}
	If there exists $p\in\mathbb{N}$ such that
	\begin{enumerate}
		\item $f\in\mathcal{T}_{-2\pi,0;2\pi,0}^{(p-1)}$;
		\item $\lim\limits_{t\to\infty}f^{(p)}(t+\mathrm{i}\xi)=0$ locally uniformly,
	\end{enumerate}
	then for every $\mathrm{Re} z>-1$ we have
	\begin{equation}
		(\mathrm{MS})\text{-}
		\sum_{k=1}^{z}f(k)=
		\int_{\mathrm{B}}^{\mathrm{B}+z}f(k)\mathrm{d}k.
	\end{equation}
\end{them}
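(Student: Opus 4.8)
The plan is to identify both sides of the claimed identity with the same explicit integral against the Bernoulli kernel, so that the theorem becomes a matter of matching two representations. The key observation is that the right-hand side $\int_{\mathrm{B}}^{\mathrm{B}+z} f(k)\,\mathrm{d}k$ is, by definition of the evaluation function and the additivity properties established in Section 2, equal to $F(\mathrm{B}+z) - F(\mathrm{B})$ where $F(w) = \int_1^w f(t)\,\mathrm{d}t$ is a primitive of $f$. The hypotheses $f \in \mathcal{T}_{-2\pi,0;2\pi,0}^{(p-1)}$ together with the vanishing of $f^{(p)}$ at infinity are precisely calibrated so that, via Theorem \ref{hierarchy}, $F \in \mathcal{T}_{-2\pi,0;2\pi,0}^{(p)}$, hence $F$ lies in a function class to which the Bernoulli umbrae $\mathrm{B} = (\frac{z\mathrm{e}^z}{\mathrm{e}^z-1}, \Omega_{-2\pi,2\pi})$ (a "good umbrae" in the sense of Lemma \ref{goodumbrae}, with the single simple poles at $\pm 2\pi\mathrm{i}$) can legitimately be applied, and so that $F(\mathrm{B}+z)$ is well-defined and analytic in $z$ for $\mathrm{Re}\,z > -1$ by Theorem \ref{naturalthem}.

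**Next I would** compare this with the defining data of MS-fractional summation. Recall $\mathrm{B} + \Delta = 1 + \mathrm{D}$ from the preceding lemma, and that $f(\mathrm{D}+z) = f'(z)$, $f(\Delta+z) = f(z+1)-f(z)$ by Corollary \ref{specficcalu}. Set $Q(y) := F(\mathrm{B}+y) - F(\mathrm{B}) = \int_{\mathrm{B}}^{\mathrm{B}+y} f(k)\,\mathrm{d}k$. The strategy is to verify that $Q$ satisfies the three defining conditions of Definition \ref{MSdef}, so that $(\mathrm{MS})\text{-}\sum_{k=x}^y f(k) = Q(y) - Q(x-1)$; specializing $x=1$ then gives exactly the claimed formula. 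For the difference equation $Q(y+1) - Q(y) = f(y+1)$: this is $F(\mathrm{B}+y+1) - F(\mathrm{B}+y)$, which by the umbral substitution rules equals $F((\mathrm{B}+\Delta) + y) - $ (difference in the $\Delta$-variable) and unwinds, using $\mathrm{B}+\Delta = 1+\mathrm{D}$, to $F'(y+1) = f(y+1)$ — this is the umbral incarnation of the classical fact that $\mathrm{B}$ is the umbral shift making discrete differences into derivatives.

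**The more delicate step** is condition (2) of Definition \ref{MSdef}: producing the polynomial sequence $P_n(y)$ of uniformly bounded degree with $\sum_{k=1}^n (f(k+y) - f(k)) - P_n(y) \to -Q(y)$. Here I would use the "hierarchy" machinery: because $F \in \mathcal{T}_{-2\pi,0;2\pi,0}^{(p)}$, the Euler–Maclaurin-type expansion of $F(\mathrm{B}+w)$ as $\mathrm{Re}\,w \to \infty$ is controlled, and the partial sums $\sum_{k=1}^n g(k)$ (for $g(k) = f(k+y) - f(k)$) should be expressible as $F(\mathrm{B}+n+\text{shift}) - F(\mathrm{B}) + o(1)$ after subtracting the polynomial-in-$n$ correction terms coming from the first $p$ Taylor coefficients of $F$ at large argument; the vanishing of $f^{(p)}$ at infinity forces the remainder to $0$. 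Writing $P_n(y)$ as exactly those polynomial correction terms (degree bounded by $p$, uniformly in $n$), the limit is $-Q(y)$ with $Q$ as above, and uniqueness of MS-summation (cited from \cite{Mueller2010}) finishes it. **I expect the main obstacle** to be the bookkeeping in this last step: correctly peeling off the polynomial parts so that the degree bound and the locally-uniform-in-$\mathrm{Im}\,t$ limits from Definition \ref{hierarchydef} line up with hypothesis (2) of the theorem, and checking that $F(\mathrm{B}+w)$ genuinely admits the asymptotic expansion in $w$ with the claimed error — essentially an application of Theorem \ref{calu} / the hierarchy estimates to $F$, but one where the simple-pole structure of the Bernoulli kernel and the weights $(1+|\xi|)^{0}$ must be tracked carefully to get convergence rather than just boundedness.
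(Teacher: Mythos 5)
Your proposal follows essentially the same route as the paper's proof: both define $Q(y)=F(\mathrm{B}+y)-F(\mathrm{B})$ for a primitive $F$, use Lemma \ref{goodumbrae} and Theorem \ref{hierarchy} to place $F$ in $\mathcal{T}_{-2\pi,0;2\pi,0}^{(p)}$ so that $F(\mathrm{B}+w)$ admits the Euler--Maclaurin-type expansion producing the polynomials $P_n(y)$ (the paper organizes this ``bookkeeping'' via the Lagrange remainder applied to $\sum_{k\le p}\frac{(f^{(k-1)}(t+y)-f^{(k-1)}(t))\mathrm{B}_k}{k!}$), and both verify the difference equation through $\mathrm{B}+\Delta=1+\mathrm{D}$ together with Corollary \ref{specficcalu} before invoking Definition \ref{MSdef}. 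No substantive gap beyond the details you already flag.
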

\begin{proof}
	Denote $F(z)=\int_{1}^{x}f(t)\mathrm{d}t$. Notice that the Bernoulli umbrae $\mathrm{B}$ satisfies the condition of Lemma \ref{goodumbrae}, therefore according to Definition \ref{Evaldef} and \ref{hierarchydef}, $F(t+\mathrm{B})$ is defined for $\mathrm{Re} t>0$.
	
	By Theorem \ref{hierarchy}, we have
	\begin{align*}
		\lim\limits_{n\to\infty}
		\left(
		F(n+\mathrm{B})-
		\sum_{k=0}^{p}
		\frac{f^{(k-1)}(n)\mathrm{B}_k}{k!}
		\right)=0.
	\end{align*}
	By the Lagrangian remainder theorem, we have
	\begin{equation*}
		\sum_{k=0}^{p}
		\frac{(f^{(k-1)}(t+y)-f^{(k-1)}(t))\mathrm{B}_k}{k!}
		=
		\sum_{k=0}^{p}
		a_k(t)y^k+o(1)y^{p+1},
	\end{equation*}
	therefore we have polynomial $P_n(y)=\sum_{k=0}^{p}a_k(n)y^k$ such that
	\begin{align*}
		\lim\limits_{n\to\infty}
		\left(
		\sum_{k=1}^{n}
		(f(k+y)-f(k))-P_n(y)
		\right)
		&=\\
		\lim\limits_{n\to\infty}
		(
		(F(n+y+\mathrm{B})-F(n+\mathrm{B}))
		-P_n(y))-
		\int_{\mathrm{B}}^{\mathrm{B}+y}
		f(k)\mathrm{d}k
		&=\\
		\lim\limits_{n\to\infty}
		o(1)y^{p+1}-
		\int_{\mathrm{B}}^{\mathrm{B}+y}
		f(k)\mathrm{d}k
		&=
		-\int_{\mathrm{B}}^{\mathrm{B}+y}
		f(k)\mathrm{d}k.
	\end{align*}
	Finally, by Definition \ref{singdef} and Corollary \ref{specficcalu}, we have
	\begin{align*}
		\mathrm{B}+\Delta=1+\mathrm{D}
		&\Rightarrow
		(\mathrm{B}+1)[-]\mathrm{B}=1+\mathrm{D}\\
		&\Rightarrow
		F(y+((\mathrm{B}+1)[-]\mathrm{B}))=F(y+1+\mathrm{D})\\
		&\Rightarrow
		F(y+\mathrm{B}+1)-F(t+\mathrm{B})=F'(y+1),
	\end{align*}
	which satisfies the conditions in Definition \ref{MSdef}.
\end{proof}
\begin{corr}
	\label{MSinter}
	If \begin{equation*}
		f(z)=
		\sum_{m=1}^{M}b_m z^{-m}+
		\sum_{n=0}^{\infty}\frac{a_n}{n!} z^n
	\end{equation*}
	and there exists $p\in\mathbb{N}$ satisfies that
	\begin{enumerate}
		\item $f\in\mathcal{T}_{-2\pi,0;2\pi,0}^{(p-1)}$;
		\item $\lim\limits_{t\to\infty}f^{(p)}(t+\mathrm{i}\xi)=0$ locally uniformly;
		\item $\sum_{n=0}^{\infty}(2\pi)^{-n}|a_n|<\infty$,
	\end{enumerate}
	then for every $\mathrm{Re}\alpha,\mathrm{Re}\beta>-1$ we have
	\begin{equation}
		(\mathrm{MS})\text{-}
		\sum_{k=\alpha+1}^{\beta}f(k)=
		\sum_{m=1}^{M} 
		\left(
		(\mathrm{MS})\text{-}
		\sum_{k=\alpha+1}^{\beta}b_mz^{-m}
		\right)+
		\sum_{n=0}^{\infty}
		\left(
		(\mathrm{MS})\text{-}
		\sum_{k=\alpha+1}^{\beta}\frac{a_n}{n!}z^n
		\right).
	\end{equation}
\end{corr}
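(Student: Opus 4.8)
The plan is to split $f=g+h$ with the principal part $g(z)=\sum_{m=1}^{M}b_m z^{-m}$ and the entire part $h(z)=\sum_{n=0}^{\infty}\frac{a_n}{n!}z^n$, reduce the claimed identity to a separate statement about $g$ and $h$ via Theorem \ref{MScomp}, and then handle the infinite series for $h$ by Corollary \ref{Interchangeability}.

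First I would verify that $g$, $h$, and each monomial $b_m z^{-m}$ satisfy the hypotheses of Theorem \ref{MScomp}. Each negative power $z^{-m}$ is holomorphic on $\{\mathrm{Re}\,z>0\}$, and it together with all its derivatives decays as $\mathrm{Re}\,z\to\infty$; estimating the Taylor remainder of $z^{-m}$ on vertical lines by a geometric series shows $z^{-m}\in\mathcal{T}^{(p-1)}_{-2\pi,0;2\pi,0}$ and $(z^{-m})^{(p)}(t+\mathrm{i}\xi)\to 0$ locally uniformly. Since $\mathcal{T}^{(p-1)}_{-2\pi,0;2\pi,0}$ is linear, $g\in\mathcal{T}^{(p-1)}_{-2\pi,0;2\pi,0}$, hence also $h=f-g\in\mathcal{T}^{(p-1)}_{-2\pi,0;2\pi,0}$ and $h^{(p)}=f^{(p)}-g^{(p)}\to 0$. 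Writing $F_u(z)=\int_{1}^{z}u(t)\,\mathrm{d}t$, Theorem \ref{MScomp} gives $(\mathrm{MS})\text{-}\sum_{k=\alpha+1}^{\beta}u(k)=F_u(\mathrm{B}+\beta)-F_u(\mathrm{B}+\alpha)$ for each $u\in\{f,g,h,b_m z^{-m}\}$ and all $\mathrm{Re}\,\alpha,\mathrm{Re}\,\beta>-1$. Because $F_f=\sum_{m=1}^{M}F_{b_m z^{-m}}+F_h$ and $\mathrm{eval}_{\mathrm{B}}$ is linear (it is an integral), the finite part splits off, and it remains to show $(\mathrm{MS})\text{-}\sum_{k=\alpha+1}^{\beta}h(k)=\sum_{n=0}^{\infty}\big((\mathrm{MS})\text{-}\sum_{k=\alpha+1}^{\beta}\tfrac{a_n}{n!}z^n\big)$.

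For this I would expand the primitive $F_h(z)=\sum_{n\ge 0}\frac{a_n}{(n+1)!}(z^{n+1}-1)=\sum_{N\ge 0}\frac{b_N}{N!}z^N$, where $b_N=a_{N-1}$ for $N\ge 1$ and $b_0$ is a constant; hypothesis (3) yields $\sum_{N\ge 0}(2\pi)^{-N}|b_N|<\infty$. The Bernoulli umbrae $\mathrm{B}$ has dominating interval $(-2\pi,2\pi)$ and the simple boundary poles of $\tfrac{z\mathrm{e}^{z}}{\mathrm{e}^{z}-1}$, so it is a good umbrae in the sense of Lemma \ref{goodumbrae} with $l=1$, and Corollary \ref{Interchangeability}(2) applies. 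Applying it to the power series $w\mapsto F_h(z+w)=\sum_{N\ge 0}\frac{F_h^{(N)}(z)}{N!}w^N$, whose coefficients still satisfy $\sum_{N\ge 0}(2\pi)^{-N}|F_h^{(N)}(z)|\le \mathrm{e}^{2\pi|z|}\sum_{N\ge 0}(2\pi)^{-N}|b_N|<\infty$ by a convolution estimate, gives $F_h(\mathrm{B}+z)=\sum_{N\ge 0}\frac{\mathrm{B}_N}{N!}F_h^{(N)}(z)$, hence
\[
(\mathrm{MS})\text{-}\sum_{k=\alpha+1}^{\beta}h(k)=\sum_{N\ge 0}\frac{\mathrm{B}_N}{N!}\big(F_h^{(N)}(\beta)-F_h^{(N)}(\alpha)\big).
\]
On the other hand, Faulhaber's formula \eqref{umbralFaulhaber} (Definition \ref{MSdef} applies to every monomial, the relevant polynomials having bounded degree) gives $(\mathrm{MS})\text{-}\sum_{k=\alpha+1}^{\beta}\tfrac{a_n}{n!}k^n=\frac{a_n}{(n+1)!}\sum_{j=0}^{n}\binom{n+1}{j}\mathrm{B}_j(\beta^{n+1-j}-\alpha^{n+1-j})$. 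Using the classical bound $|\mathrm{B}_j|\le C\,j!\,(2\pi)^{-j}$ (which is the good-umbrae estimate of Lemma \ref{goodumbrae} read off for $\mathrm{B}$), the double sum over $n$ and $j$ converges absolutely, being dominated by $C(\mathrm{e}^{2\pi|\beta|}+\mathrm{e}^{2\pi|\alpha|})\sum_{n\ge 0}(2\pi)^{-n}|a_n|$; swapping the two summations, re-indexing $m=n+1-j$, and identifying $\sum_{n\ge j}\frac{a_n}{(n+1-j)!}z^{n+1-j}$ with $F_h^{(j)}(z)$ up to the constant $a_{j-1}$ that cancels in the difference, the right-hand side collapses exactly to the displayed sum over $N$. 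Combined with the splitting of the finite part, this is the asserted identity.

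The heart of the argument, and the step I expect to be the main obstacle, is the interchange just described. Estimating $h$ or $F_h$ directly on a contour $\mathbb{R}-\mathrm{i}t$ fails: $|\hat{\mathcal{A}}(\xi-\mathrm{i}t)|$ for $\mathrm{B}$ decays only like $\mathrm{e}^{-2\pi|\xi|}$, while $h$ itself can grow like $\mathrm{e}^{2\pi|\mathrm{Im}\,z|}$, so the naive tail bound diverges as soon as $|\alpha|,|\beta|$ (or $t$) exceed $2\pi$. The resolution — and the reason hypothesis (3) carries exactly the constant $2\pi$ — is to expand in powers of the umbral variable rather than along the contour, so that the factorial decay $|\mathrm{B}_j|\sim j!\,(2\pi)^{-j}$ of the Bernoulli numbers precisely absorbs the growth of $\beta^{n+1-j}$; this is exactly the mechanism the "good umbrae" branch of Corollary \ref{Interchangeability} is built to exploit. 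The only other non-trivial point is verifying $z^{-m}\in\mathcal{T}^{(p-1)}_{-2\pi,0;2\pi,0}$, which is routine given the geometric-series remainder estimate noted above.
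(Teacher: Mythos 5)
Your proposal is correct and takes essentially the same route as the paper: the paper's proof is precisely the one-line combination of Theorem \ref{MScomp} with Corollary \ref{Interchangeability} (the good-umbrae data of Lemma \ref{goodumbrae} for $\mathrm{B}$, with simple poles at $\pm 2\pi\mathrm{i}$, supplying the $2\pi$ threshold), which you have carried out in detail. The extra verifications you supply --- $z^{-m}\in\mathcal{T}^{(p-1)}_{-2\pi,0;2\pi,0}$, the convolution estimate for $\sum_N (2\pi)^{-N}|F_h^{(N)}(z)|$, and the Faulhaber re-summation matched against $\sum_N \frac{\mathrm{B}_N}{N!}\bigl(F_h^{(N)}(\beta)-F_h^{(N)}(\alpha)\bigr)$ --- are exactly the steps the paper leaves implicit.
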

\begin{proof}
	Apply Corollary \ref{Interchangeability} and Theorem \ref{MScomp}.
\end{proof}
\begin{corr}
	\label{MSinterderi}
	If there exists $p\in\mathbb{N}$ satisfies that
	\begin{enumerate}
		\item $f'\in\mathcal{T}_{-2\pi,0;2\pi,0}^{(p-1)}$;
		\item $\lim\limits_{t\to\infty}f^{(p+1)}(t+\mathrm{i}\xi)=0$ locally uniformly;
		\item $\int_{-\infty}^{\infty}
		\mathrm{e}^{-2\pi|\xi|}
		|f(t+\mathrm{i}\xi)|\mathrm{d}\xi$ is locally bounded for $\mathrm{Re}t>0$,
	\end{enumerate}
	then we have
	\begin{equation*}
		(\mathrm{MS})\text{-}
		\frac{\mathrm{d}}{\mathrm{d}z}
		\sum_{k=1}^{z}f(k)=
		f(\mathrm{B})+
		(\mathrm{MS})\text{-}
		\sum_{k=1}^{z}
		\frac{\mathrm{d}}{\mathrm{d}k}
		f(k).
	\end{equation*}
\end{corr}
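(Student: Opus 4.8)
The plan is to push both sides of the identity through the Bernoulli evaluation, reducing it to the single assertion that $\mathrm{d}/\mathrm{d}z$ commutes with $\mathrm{eval}_{\mathrm{B}}$. Set $F(x)=\int_{1}^{x}f(t)\,\mathrm{d}t$. Since $f'\in\mathcal{T}_{-2\pi,0;2\pi,0}^{(p-1)}$, Theorem \ref{hierarchy} gives $\int_{1}^{x}f'(t)\,\mathrm{d}t=f(x)-f(1)\in\mathcal{T}_{-2\pi,0;2\pi,0}^{(p)}$, and since the constant $f(1)$ lies in this space (here $b=2\pi>0$) we conclude $f\in\mathcal{T}_{-2\pi,0;2\pi,0}^{(p)}$; hypothesis 2 then supplies both the tail condition $\lim_{t\to\infty}f^{(p+1)}(t+\mathrm{i}\xi)=0$ needed to apply Theorem \ref{MScomp} to $f$ at order $p+1$ and the condition $\lim_{t\to\infty}(f')^{(p)}(t+\mathrm{i}\xi)=0$ to apply it to $f'$ at order $p$. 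For $\mathrm{Re}\,z>-1$ this yields
\begin{align*}
	(\mathrm{MS})\text{-}\sum_{k=1}^{z}f(k)&=F(\mathrm{B}+z)-F(\mathrm{B}),\\
	(\mathrm{MS})\text{-}\sum_{k=1}^{z}f'(k)&=f(\mathrm{B}+z)-f(\mathrm{B}),
\end{align*}
the second line because $\int_{1}^{x}f'(t)\,\mathrm{d}t=f(x)-f(1)$. As $F(\mathrm{B})$ does not depend on $z$, the asserted formula is equivalent to $\frac{\mathrm{d}}{\mathrm{d}z}F(\mathrm{B}+z)=f(\mathrm{B}+z)$, and the constant is thereby identified as $c_{f}=f(\mathrm{B})$.

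This remaining identity is exactly Theorem \ref{intercderi}, applied with base umbrae $\mathrm{B}$ — or $\mathrm{B}+z$ at a general base point, using that the evaluation is compatible with translation by the umbrae $(z)$ — and to the function $F$ in place of $f$, so that $F'=f$ plays the role of $f'$: it gives $f(\mathrm{B}+z)=\frac{\mathrm{d}}{\mathrm{d}z_{0}}F(\mathrm{B}+z+z_{0})(0)=\frac{\mathrm{d}}{\mathrm{d}z}F(\mathrm{B}+z)$. The hypotheses of Theorem \ref{intercderi} to be checked for $F$ are that $F'=f$ and the translates $F(\cdot+z_{0})$ belong to $\mathcal{T}_{\mathrm{B}+z}(I)$ for a suitable open interval $I$, and that $\int_{-\infty-\mathrm{i}t}^{\infty-\mathrm{i}t}|\hat{\mathcal{B}}(w)F(\mathrm{i}w+z_{0})|\,\mathrm{d}w$ is uniformly bounded for $z_{0}$ near $0$, where $\hat{\mathcal{B}}$ is the Fourier transform of the Bernoulli generating function. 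Since $\mathrm{B}$ satisfies Lemma \ref{goodumbrae} with $l=k=1$, we have $|\hat{\mathcal{B}}(\xi-\mathrm{i}t)|\leq C(t)\mathrm{e}^{-2\pi|\xi|}$; combined with the exponential-type analyticity of $f$ and of $F$ (the latter lies in $\mathcal{T}_{-2\pi,0;2\pi,0}^{(p+1)}$ by another application of Theorem \ref{hierarchy}) and with hypothesis 3, the $\mathcal{T}_{\mathrm{B}+z}(I)$-memberships and the local integrability follow by routine estimates.

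The one genuine obstacle is the uniform bound: Theorem \ref{intercderi} constrains the primitive $F$, which on vertical lines may grow faster than $f$ itself, and this is precisely what Lemma \ref{convlem} handles. Writing $F(t'+\mathrm{i}\xi)=\int_{1}^{t'}f(s)\,\mathrm{d}s+\mathrm{i}\int_{0}^{\xi}f(t'+\mathrm{i}\sigma)\,\mathrm{d}\sigma$, hypothesis 3 gives $\mathrm{e}^{-2\pi|\sigma|}|f(t'+\mathrm{i}\sigma)|\in L^{1}(\mathbb{R};\sigma)$ locally uniformly in $\mathrm{Re}\,t'>0$, so Lemma \ref{convlem} (with $b=2\pi$, $n=0$) yields $\mathrm{e}^{-2\pi|\xi|}|F(t'+\mathrm{i}\xi)|\in L^{1}(\mathbb{R};\xi)$ locally uniformly; against $|\hat{\mathcal{B}}(\xi-\mathrm{i}t)|\lesssim\mathrm{e}^{-2\pi|\xi|}$, a shift $z_{0}$ only contributes the bounded factor $\mathrm{e}^{2\pi|\mathrm{Im}\,z_{0}|}$, so the required bound is uniform. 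Putting the pieces together,
\begin{align*}
	(\mathrm{MS})\text{-}\frac{\mathrm{d}}{\mathrm{d}z}\sum_{k=1}^{z}f(k)
	&=\frac{\mathrm{d}}{\mathrm{d}z}\bigl(F(\mathrm{B}+z)-F(\mathrm{B})\bigr)
	=f(\mathrm{B}+z)\\
	&=f(\mathrm{B})+(\mathrm{MS})\text{-}\sum_{k=1}^{z}f'(k),
\end{align*}
which is the asserted formula, with $c_{f}=f(\mathrm{B})$.
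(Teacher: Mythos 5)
Your proposal is correct and follows essentially the same route as the paper, whose proof is simply ``Apply Theorem \ref{intercderi} and Lemma \ref{convlem}'': you reduce the identity via Theorem \ref{MScomp} to $\frac{\mathrm{d}}{\mathrm{d}z}F(\mathrm{B}+z)=f(\mathrm{B}+z)$, verify the derivative interchange with Theorem \ref{intercderi}, and use Lemma \ref{convlem} together with the $\mathrm{e}^{-2\pi|\xi|}$ decay of $\hat{\mathcal{B}}$ from Lemma \ref{goodumbrae} to obtain the uniform bound on the primitive, which is exactly the intended argument; your write-up just makes explicit the steps the paper leaves implicit.
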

\begin{proof}
	Apply Theorem \ref{intercderi} and Lemma \ref{convlem}.
\end{proof}
\noindent
In fact, the condition of $f$ in Corollary \ref{MSinter} also satisfies the third condition in Corollary \ref{MSinterderi}.

At this point, we justify the speculation from  Müller and Schleicher. Next, we will see that these conditions are sufficient to cover the special series they care about.

Finally, it needs to be emphasized that the processing of Bernoulli umbrae is essentially equivalent to applying the Abel-Plana formula. Although we have removed the decay condition of the function when the imaginary part is large, this is actually done by requiring the function to be exponential growth.

\subsection{Gosper series}

Let's consider a sufficiently general example.

Suppose
\begin{align}
	J(z)
	&=
	\sum_{n=0}^{\infty}
	\frac{a_{2n}}{(2n)!}z^{2n}
\end{align}
satisfies that
\begin{enumerate}
	\item $|a_n|\leq C\cdot (1+n)^{\nu_a}$;
	\item $|J(z)|\leq C\cdot
	\mathrm{e}^{|\mathrm{Im} z|}
	(1+|z|)^{\nu_J}\quad\mathrm{Re}z>0$.
\end{enumerate}
\begin{lemm}~ 
	\begin{enumerate}
		\item $|\mathrm{Im}\sqrt{z_1+z_2}|
		\leq
		|\mathrm{Im}\sqrt{z_1}|+|\mathrm{Im}\sqrt{z_2}|$;
		\item $|\tilde{J}(z)|\leq
		C\cdot\mathrm{e}^{|z|}(1+|z|)^{\nu_a}$, where $\tilde{J}(z)=
		\sum_{n=0}^{\infty}
		\frac{|a_{2n}|}{(2n)!}z^{2n}$.
	\end{enumerate}
\end{lemm}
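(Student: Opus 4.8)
The plan is to prove the two statements separately, both by elementary means. For part (1), I would start from the identity $\sqrt{z_1+z_2}$ having imaginary part controlled by the imaginary parts of $\sqrt{z_1}$ and $\sqrt{z_2}$; the cleanest route is to recall that for any complex $w$ one has $|\mathrm{Im}\sqrt{w}|^2 = \tfrac12(|w| - \mathrm{Re}\, w)$, so squaring the desired inequality reduces it to
\begin{equation*}
	|z_1+z_2| - \mathrm{Re}(z_1+z_2)
	\leq
	|z_1| - \mathrm{Re}\, z_1 + |z_2| - \mathrm{Re}\, z_2
	+ 2\sqrt{(|z_1|-\mathrm{Re}\, z_1)(|z_2|-\mathrm{Re}\, z_2)}.
\end{equation*}
Since the $\mathrm{Re}$ terms cancel on both sides after moving $-\mathrm{Re}(z_1+z_2)$ over, what remains is $|z_1+z_2| \leq |z_1| + |z_2| + 2\sqrt{(|z_1|-\mathrm{Re}\, z_1)(|z_2|-\mathrm{Re}\, z_2)}$, and by the triangle inequality it suffices to check that the extra square-root term is nonnegative, which it plainly is. (One must be mildly careful about the branch of $\sqrt{\cdot}$, but since only $|\mathrm{Im}\sqrt{\cdot}|$ appears, the choice of branch is irrelevant, and the formula $|\mathrm{Im}\sqrt{w}|^2 = \tfrac12(|w|-\mathrm{Re}\, w)$ holds for either branch.)

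For part (2), the point is that $\tilde{J}$ is an even entire function built from the absolute values of the coefficients of $J$, so I cannot simply quote the growth bound assumed for $J$; instead I would bound $\tilde{J}$ directly from the coefficient growth hypothesis $|a_n| \leq C(1+n)^{\nu_a}$. Writing
\begin{equation*}
	|\tilde{J}(z)|
	\leq
	\sum_{n=0}^{\infty}\frac{|a_{2n}|}{(2n)!}|z|^{2n}
	\leq
	C\sum_{n=0}^{\infty}\frac{(1+2n)^{\nu_a}}{(2n)!}|z|^{2n},
\end{equation*}
I would compare the right-hand side with $\cosh|z| = \sum_{n\geq 0}|z|^{2n}/(2n)!$. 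The factor $(1+2n)^{\nu_a}$ grows only polynomially in $n$, and a standard fact is that for any fixed $\nu\geq 0$ the series $\sum n^{\nu} x^n/n!$ is $O(e^{x}(1+x)^{\nu})$ as $x\to\infty$; applied here (with $x = |z|$, keeping only even indices) this yields $|\tilde{J}(z)| \leq C' e^{|z|}(1+|z|)^{\nu_a}$. To make the polynomial-times-exponential estimate precise I would split the sum at $n \approx |z|$: for $n \leq |z|$ the factor $(1+2n)^{\nu_a} \leq (1+2|z|)^{\nu_a}$ pulls out, leaving $\cosh|z|$; for $n > |z|$ the tail of $\sum (1+2n)^{\nu_a}|z|^{2n}/(2n)!$ is dominated by a constant multiple of $e^{|z|}$ by a ratio-test / Stirling argument. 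Note $e^{|z|}$ absorbs $\cosh|z|$ up to a constant, so the final bound has the claimed form. Also, since $\mathrm{Re}\, z$ does not appear, the restriction $\mathrm{Re}\, z > 0$ present in the hypothesis on $J$ is not needed here — the bound on $\tilde{J}$ holds on all of $\mathbb{C}$, which is exactly what later applications will need when they substitute arguments like $b^2 + (2\pi n)^2$ whose square roots wander over the plane.

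I expect part (1) to be essentially mechanical once the identity $|\mathrm{Im}\sqrt{w}|^2 = \tfrac12(|w|-\mathrm{Re}\, w)$ is in hand, and the only real care needed there is bookkeeping with branches. The mild obstacle in part (2) is making the ``polynomial times $e^{|z|}$'' estimate quantitatively clean rather than merely asymptotic — one wants a bound valid for \emph{all} $z$, including small $z$, so I would phrase the split-at-$n\approx|z|$ argument so that the constant is uniform, using $(1+|z|)^{\nu_a} \geq 1$ to cover the bounded region trivially. Neither step interacts with the umbral machinery of the previous sections; this lemma is purely a pair of analytic growth estimates staged for the Gosper-series application to follow.
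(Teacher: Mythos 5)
The paper states this lemma without proof, so there is nothing to compare against; judged on its own, your part (1) is correct and complete: the identity $|\mathrm{Im}\sqrt{w}|^2=\tfrac12(|w|-\mathrm{Re}\,w)$ (valid for either branch) reduces the claim, after the real parts cancel, to $|z_1+z_2|\leq|z_1|+|z_2|+2\sqrt{(|z_1|-\mathrm{Re}\,z_1)(|z_2|-\mathrm{Re}\,z_2)}$, which follows from the triangle inequality since the extra term is nonnegative.

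Part (2), however, has a genuine gap: your argument only covers $\nu_a\geq 0$, while the lemma must hold (and is used in this paper) for negative $\nu_a$. Both ingredients of your split fail in that case: for $n\leq|z|$ the step ``$(1+2n)^{\nu_a}\leq(1+2|z|)^{\nu_a}$'' reverses when $\nu_a<0$, and for the tail a bound by ``a constant multiple of $e^{|z|}$'' is no longer sufficient, because for $\nu_a<0$ the asserted estimate $C\,e^{|z|}(1+|z|)^{\nu_a}$ is genuinely \emph{stronger} than $C\,e^{|z|}$. This is not a cosmetic issue: in the Bessel example one has $\nu_a=-(\mathrm{Re}\,\nu+\tfrac12)$, Corollary \ref{Gosperseries} assumes $\nu_a<0$ in its first case, and the proof of Proposition \ref{theyareright} needs exactly the decaying factor $x^{\nu_a}$ to make $\int_1^\infty x^{\nu_a-n}\mathrm{e}^{\sqrt{x^2+|b|^2}-x}\mathrm{d}x$ converge with $n=1$; with the exponent replaced by $0$ that integral diverges. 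The fix is to run your split the other way for $\nu_a<0$: split at $2n\leq|z|/2$. On the tail $2n>|z|/2$ the factor $(1+2n)^{\nu_a}$ is decreasing in $n$, so it is at most $(1+|z|/2)^{\nu_a}\leq C(1+|z|)^{\nu_a}$, and the remaining sum is at most $\cosh|z|\leq\mathrm{e}^{|z|}$; on the head $2n\leq|z|/2$ use $(1+2n)^{\nu_a}\leq1$ and the Stirling estimate $\sum_{2n\leq|z|/2}|z|^{2n}/(2n)!\leq C\,\mathrm{e}^{c|z|}$ with $c=\tfrac12(1+\ln 2)<1$, and absorb $\mathrm{e}^{c|z|}$ into $\mathrm{e}^{|z|}(1+|z|)^{\nu_a}$ since $\mathrm{e}^{-(1-c)|z|}(1+|z|)^{-\nu_a}$ is bounded. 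With that case added (your argument already handles $\nu_a\geq0$, modulo phrasing the tail bound as $O(\mathrm{e}^{|z|}(1+|z|)^{\nu_a})$ rather than $O(\mathrm{e}^{|z|})$), the proof is complete, and your observation that the bound holds on all of $\mathbb{C}$ is correct and is indeed what the later substitution $z\mapsto\sqrt{b^2+(2\pi z)^2}$ requires.
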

\begin{prop}
	\label{theyareright}
	$z^{-n}J(\sqrt{(2\pi z)^2+b^2})$ satisfies all the conditions of Corollary \ref{MSinter}, where $n>\max\{\nu_a,\nu_J\}+1$.
\end{prop}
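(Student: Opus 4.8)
The plan is to verify the three requirements of Corollary~\ref{MSinter} for $f(z)\coloneqq z^{-n}J(\sqrt{(2\pi z)^{2}+b^{2}})$: the Laurent shape, membership in $\mathcal{T}_{-2\pi,0;2\pi,0}^{(p-1)}$ with $\lim_{t\to\infty}f^{(p)}(t+\mathrm{i}\xi)=0$ locally uniformly for a suitable $p$, and $\sum_{k}(2\pi)^{-k}|a_{k}|<\infty$. Everything will follow from one pointwise bound for $f$ and its derivatives and one bound for the Taylor coefficients of $g(z)\coloneqq J(\sqrt{(2\pi z)^{2}+b^{2}})$. First, since $J$ is even, $w\mapsto J(\sqrt{w})=\sum_{k\ge0}\frac{a_{2k}}{(2k)!}w^{k}$ is entire, so $g$ is an entire function of $z$ and $f=z^{-n}g$ is meromorphic with a single pole, of order at most $n$, at the origin; splitting its Laurent expansion at degree $0$ gives exactly the form required, with $M=n$, $b_{m}=g^{(n-m)}(0)/(n-m)!$, and $a_{j}=\tfrac{j!}{(j+n)!}g^{(j+n)}(0)=j!\,d_{j+n}$, where $d_{i}$ denotes the $i$-th Taylor coefficient of $g$ at $0$ (and $d_{i}$, hence $a_{i}$, vanishes for odd $i$).

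The key pointwise estimate comes from part~1 of the lemma preceding this proposition, applied with $z_{1}=(2\pi z)^{2}$, $z_{2}=b^{2}$: together with $|\mathrm{Im}\sqrt{w^{2}}|=|\mathrm{Im}\,w|$ it gives $|\mathrm{Im}\sqrt{(2\pi z)^{2}+b^{2}}|\le 2\pi|\mathrm{Im}\,z|+|\mathrm{Im}\,b|$, and combined with the hypothesis $|J(w)|\le C\,\mathrm{e}^{|\mathrm{Im}\,w|}(1+|w|)^{\nu_{J}}$ — which extends from $\mathrm{Re}\,w>0$ to all $w$ by the evenness of $J$ and continuity — it yields
\begin{equation*}
	|f(z)|\le C_{b}\,|z|^{-n}(1+|z|)^{\nu_{J}}\,\mathrm{e}^{2\pi|\mathrm{Im}\,z|},\qquad \mathrm{Re}\,z>0 .
\end{equation*}
Differentiating by Fa\`a di Bruno through the entire function $w\mapsto J(\sqrt{w})$ and the polynomial $(2\pi z)^{2}+b^{2}$ (each $z$-derivative either improves the power $-n$ or costs only an extra factor $1+|z|$, and the argument of $J^{(m)}$ stays within bounded distance of $(2\pi z)^{2}+b^{2}$, so the lemma applies again to its imaginary part) produces $|f^{(p)}(z)|\le C_{b,p}\,|z|^{-n}(1+|z|)^{\nu_{J}+p}\,\mathrm{e}^{2\pi|\mathrm{Im}\,z|}$ for $\mathrm{Re}\,z\ge1$. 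From the first bound, condition~3 of Definition~\ref{hierarchydef} is immediate, and for $z=t+\mathrm{i}\xi$ it gives $|f(t+\mathrm{i}\xi)|\,\mathrm{e}^{\mp2\pi\xi}\le C_{t}(1+|\xi|)^{\nu_{J}-n}$, which is integrable near $+\infty$ (resp.\ $-\infty$) precisely because $n>\nu_{J}+1$; this gives the integrability halves of conditions~1--2 of Definition~\ref{hierarchydef}. From the derivative bound, $|f^{(p)}(t+\mathrm{i}\xi)|\le C\,(\mathrm{Re}\,t)^{\nu_{J}+p-n}\,\mathrm{e}^{2\pi|\mathrm{Im}(t+\mathrm{i}\xi)|}\to0$ locally uniformly whenever $p<n-\nu_{J}$, so condition~2 of Corollary~\ref{MSinter} holds with $p=1$ (legitimate since $n>\nu_{J}+1$).

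For $\sum_{k}(2\pi)^{-k}|a_{k}|<\infty$ I would work from the explicit expansion $d_{2j}=(2\pi)^{2j}\sum_{m\ge j}\frac{a_{2m}}{(2m)!}\binom{m}{j}b^{2(m-j)}$ rather than from the crude bound $|g(z)|\le\tilde{J}(2\pi|z|+|b|)$ of part~2 of the lemma (a Cauchy estimate from the latter loses half a power in the resulting threshold). Writing $a_{2j-n}=(2j-n)!\,d_{2j}$ and interchanging the summations reduces $\sum_{k}(2\pi)^{-k}|a_{k}|$ to $(2\pi)^{n}\sum_{m}\frac{|a_{2m}|}{(2m)!}\sum_{j\le m}(2j-n)!\binom{m}{j}|b|^{2(m-j)}$; then $|a_{2m}|\le C(1+2m)^{\nu_{a}}$, $\binom{m}{j}\le m^{m-j}/(m-j)!$, and $(2m-n)!/(2m)!\le(2m-n+1)^{-n}$ show that the inner sum is controlled by its $j=m$ term up to the convergent factor $\mathrm{e}^{|b|^{2}/m}\le\mathrm{e}^{|b|^{2}}$, whence $\sum_{k}(2\pi)^{-k}|a_{k}|\lesssim\sum_{m}m^{\nu_{a}-n}<\infty$, again because $n>\nu_{a}+1$. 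At this stage only the two limit statements in Definition~\ref{hierarchydef} remain.

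Those limit statements — $\int_{0}^{\infty}|f(t+\mathrm{i}\xi)-f(t)|\,\mathrm{e}^{-2\pi\xi}\,\mathrm{d}\xi\to0$ and its mirror over $\xi<0$ as $\mathrm{Re}\,t\to\infty$, with $p=1$ so that the Taylor polynomial is the constant $f(t)$ — are the only genuinely delicate point, and I expect the main obstacle to be arranging the order of the two limits. I would split at a fixed large $M$: for $0<\xi\le M$ the mean value estimate $|f(t+\mathrm{i}\xi)-f(t)|\le|\xi|\sup_{[t,\,t+\mathrm{i}\xi]}|f'|$ together with the derivative bound gives a contribution of size $C_{M}(\mathrm{Re}\,t)^{\nu_{J}+1-n}$, which tends to $0$ for fixed $M$; for $\xi>M$ one uses $|f(t+\mathrm{i}\xi)-f(t)|\le|f(t+\mathrm{i}\xi)|+|f(t)|$, where the first term contributes at most a multiple of $\int_{M}^{\infty}(1+\xi)^{\nu_{J}-n}\,\mathrm{d}\xi$ — small for $M$ large, uniformly in $\mathrm{Re}\,t$ and locally uniformly in $\mathrm{Im}\,t$ — and $|f(t)|\to0$. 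Fixing $M$ first and then letting $\mathrm{Re}\,t\to\infty$ closes the estimate; the $\xi<0$ integral is identical after reversing signs, and since every bound depends on $\mathrm{Im}\,t$ only through locally bounded quantities the convergence is locally uniform in $\mathrm{Im}\,t$, as required. The remainder is the routine bookkeeping of feeding the two displayed estimates into the definitions.
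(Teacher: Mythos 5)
Your proposal is correct, but it diverges from the paper at exactly the point you flag as delicate, and the divergence is instructive. The paper takes $p=0$: it shows $z^{-n}J(\sqrt{z^2+b^2})\in\mathcal{T}^{(-1)}_{-1,0;1,0}$ (which rescales to $\mathcal{T}^{(-1)}_{-2\pi,0;2\pi,0}$ for $z\mapsto 2\pi z$), so the Taylor polynomial in Definition \ref{hierarchydef} is empty, the limit conditions reduce to the vanishing of the weighted $L^1$-norm of $f(t+\mathrm{i}\cdot)$, and condition 2 of Corollary \ref{MSinter} becomes $f\to 0$; all of this follows at once from the single bound $\mathrm{e}^{-|\mathrm{Im}z|}\,|z^{-n}J(\sqrt{z^2+b^2})|\le C|z|^{\nu_J-n}$ (obtained, as in your argument, from part 1 of the preceding lemma), with no derivative estimates and no splitting in $\xi$. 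By choosing $p=1$ you are forced to prove $\int|f(t+\mathrm{i}\xi)-f(t)|\mathrm{e}^{-2\pi\xi}\mathrm{d}\xi\to 0$, hence the Faà di Bruno bounds and the two-regime argument; this is sound (a Cauchy estimate on $f$ itself would even avoid the extra factor $(1+|z|)^{p}$), but it is work the paper's choice of $p$ renders unnecessary. For the coefficient condition the routes also differ: the paper majorizes the Taylor coefficients of $J(\sqrt{z^2+b^2})$ by those of $\tilde J(\sqrt{z^2+|b|^2})$ and uses $j!=\int_0^\infty x^j\mathrm{e}^{-x}\mathrm{d}x$ to reduce $\sum_j|c_j|<\infty$ to $\int_1^\infty x^{-n}\tilde J(\sqrt{x^2+|b|^2})\mathrm{e}^{-x}\mathrm{d}x<\infty$, bounded via part 2 of the lemma by $\int_1^\infty x^{\nu_a-n}\mathrm{d}x$; this attains the same threshold $n>\nu_a+1$, so the ``half a power'' loss you feared concerns only the Cauchy-estimate route you discarded, not the paper's Laplace-type argument. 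Your direct expansion of the double sum is also valid and reaches the same threshold, though the claim that the inner sum is controlled by its $j=m$ term ``up to $\mathrm{e}^{|b|^2/m}$'' deserves a sentence for the middle range where $2j-n=O(1)$, since the three inequalities you cite do not immediately cover it (those terms are in fact negligible against $(2m-n)!$ for large $m$, so the conclusion stands).
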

\begin{proof}
	Suppose that
	\begin{equation*}
		z^{-n}J(\sqrt{z^2+b^2})=
		\sum_{m=1}^{M}b_m z^{-m}+
		\sum_{n=0}^{\infty}\frac{c_n}{n!} z^n,
	\end{equation*}
	only need to notice that
	\begin{align*}
		\int_{1}^{\infty}
		x^{-n}\tilde{J}(\sqrt{x^2+|b|^2})
		\mathrm{e}^{-x}\mathrm{d}x
		<\infty
		\Rightarrow
		\sum_{n=0}^{\infty}|c_n|<\infty,
	\end{align*}
	and for $n>\nu_a+1$ we have
	\begin{align*}
		\int_{1}^{\infty}
		x^{-n}\tilde{J}(\sqrt{x^2+|b|^2})
		\mathrm{e}^{-x}\mathrm{d}x
		\leq C\cdot
		\int_{1}^{\infty}x^{\nu_a-n}
		\mathrm{e}^{\sqrt{x^2+|b|^2}-x}\mathrm{d}x<\infty.
	\end{align*}
	When the real part of $z$ is large enough, we have
	\begin{align*}
		\mathrm{e}^{-|\mathrm{Im}z|}
		|z^{-n}J(\sqrt{z^2+b^2})|
		&\leq
		C\cdot
		\mathrm{e}^{|\mathrm{Im}\sqrt{z^2+b^2}|
		-|\mathrm{Im}z|}
		|z|^{\nu_J-n}\\
		&\leq
		C\cdot|z|^{\nu_J-n},
	\end{align*}
	and for $n>\nu_J+1$ we have $z^{-n}J(\sqrt{z^2+b^2})\in\mathcal{T}_{-1,0;1,0}^{(-1)}$.
\end{proof}
For example, we can take
\begin{equation}
	J(z)=z^{-\nu}J_{\nu}(z)=
	\sum_{n=0}^{\infty}
	\frac{(-1)^nz^{2n}}{2^{\nu+2n}n!(n+\nu)!}.
\end{equation}
From the Poisson integral representation, for $\mathrm{Re}\nu>-\frac{1}{2}$ we have
\begin{align*}
	J(z)&=
	\frac{1}{2^\nu (\nu-\frac{1}{2})!\sqrt{\pi}}
	\int_{0}^{\pi}
	\mathrm{e}^{\mathrm{i}z\cos\theta}
	\sin^{2\nu}\theta\mathrm{d}\theta\\
	|J(z)|&\leq C\cdot \mathrm{e}^{|\mathrm{Im}z|},
\end{align*}
therefore $\nu_a=-(\mathrm{Re}\nu+\frac{1}{2}),\nu_J=0$. In particular, when $\nu=1/2$ we have $J(z)=\sqrt{\frac{2}{\pi}}\frac{\sin z}{z},n>0$, when $\nu=-1/2$ we have $J(z)=\sqrt{\frac{2}{\pi}}\cos z,n>1$.

We can improve the related results in \cite{Gosper1993}, that is, generalize $b$ to every complex number.
\begin{corr}[Gosper series]~ 
	\label{Gosperseries}
	If $J(z)=\sum_{n=0}^{\infty}
	\frac{a_{2n}}{(2n)!}z^{2n}$ satisfies that
	\begin{enumerate}
		\item $|a_n|\leq C\cdot (1+n)^{\nu_a}$;
		\item $|J(z)|\leq C\cdot
		\mathrm{e}^{|\mathrm{Im} z|}
		(1+|z|)^{\nu_J}\quad\mathrm{Re}z>0$,
	\end{enumerate}
	and $b\in\mathbb{C}$, then
	\begin{enumerate}
		\item For $\nu_a,\nu_J<0$ we have
		\begin{align}
			(\mathrm{MS})\text{-}
			\sum_{n=1/4}^{-1/4}
			\frac{J(\sqrt{b^2+(2\pi n)^2})}{n}
			&=\pi J(b)\\
			\sum_{n=0}^{\infty}
			\frac{(-1)^n}{n+\frac{1}{2}}J(\sqrt{b^2+\pi^2(n+1/2)^2})
			&=\frac{\pi}{2} J(b);
		\end{align}
		\item For $\nu_a,\nu_J<1$ we have
		\begin{align}
			(\mathrm{MS})\text{-}
			\sum_{n=1}^{-1/2}
			\frac{J(\sqrt{b^2+(2\pi n)^2})}{n^2}
			&=-\frac{\pi^2J(b)}{3}-\frac{\pi^2J'(b)}{b}\\
			\sum_{n=0}^{\infty}
			\frac{(-1)^n}{n^2}J(\sqrt{b^2+\pi^2n^2})
			&=-\frac{\pi^2J(b)}{12}-\frac{\pi^2J'(b)}{4b}.
		\end{align}
	\end{enumerate}
\end{corr}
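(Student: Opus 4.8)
The plan is to derive each of the four identities from a single application of Corollary~\ref{MSinter}, the input function being exactly the one treated in Proposition~\ref{theyareright}. Set $j=1$ in the first case and $j=2$ in the second, and put $g(z)=z^{-j}J\!\left(\sqrt{b^{2}+(2\pi z)^{2}}\right)$. Since $J$ is even and entire, writing $J(w)=\tilde{J}(w^{2})$ with $\tilde J$ entire shows that $\Phi(z):=J\!\left(\sqrt{b^{2}+(2\pi z)^{2}}\right)=\tilde J\!\left(b^{2}+(2\pi z)^{2}\right)$ is an entire even function, so $\Phi(z)=\sum_{k\geq 0}\gamma_{k}z^{2k}$; a short computation of the first two coefficients gives $\gamma_{0}=J(b)$ and $\gamma_{1}=2\pi^{2}J'(b)/b$, the quotient $J'(b)/b$ being read with its removable singularity at $b=0$. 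Consequently
\begin{align*}
	g(z)&=J(b)\,z^{-j}+R_{j}(z),\\
	R_{1}(z)&=\sum_{k\geq 1}\gamma_{k}z^{2k-1}, \qquad R_{2}(z)=\frac{2\pi^{2}J'(b)}{b}+\sum_{k\geq 2}\gamma_{k}z^{2k-2},
\end{align*}
which is precisely the normal form $f(z)=\sum_{m=1}^{M}b_{m}z^{-m}+\sum_{n\geq 0}\tfrac{a_{n}}{n!}z^{n}$ required in Corollary~\ref{MSinter}: the only nonzero principal coefficient is $b_{j}=J(b)$, and the analytic part is a power series in the odd powers $z,z^{3},z^{5},\dots$ (for $j=1$) or a constant together with a power series in the even powers $z^{2},z^{4},\dots$ (for $j=2$). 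The three hypotheses of Corollary~\ref{MSinter} hold for $g$ precisely because $j>\max\{\nu_{a},\nu_{J}\}+1$ under the stated sign conditions ($\nu_{a},\nu_{J}<0$ makes $j=1$ admissible, $\nu_{a},\nu_{J}<1$ makes $j=2$ admissible), which is the content of Proposition~\ref{theyareright} with $n=j$.

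Next I would apply Corollary~\ref{MSinter} with $(\alpha,\beta)=(-\tfrac34,-\tfrac14)$ in the first case and $(\alpha,\beta)=(0,-\tfrac12)$ in the second; in both cases $\mathrm{Re}\,\alpha,\mathrm{Re}\,\beta>-1$, and $\alpha+1$ equals $\tfrac14$ resp.\ $1$, matching the summation ranges in the statement. The corollary splits the fractional sum term by term, and I then insert the explicit power-sum values recorded in the introduction. In the first case $(\mathrm{MS})\text{-}\sum_{n=1/4}^{-1/4}n^{-1}=\pi$ while $(\mathrm{MS})\text{-}\sum_{n=1/4}^{-1/4}n^{2k-1}=0$ for every positive integer $k$, so only the term $J(b)z^{-1}$ survives and the fractional sum equals $\pi J(b)$. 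In the second case $(\mathrm{MS})\text{-}\sum_{n=1}^{-1/2}n^{-2}=-\tfrac{\pi^{2}}{3}$, $(\mathrm{MS})\text{-}\sum_{n=1}^{-1/2}1=-\tfrac12$, and $(\mathrm{MS})\text{-}\sum_{n=1}^{-1/2}n^{2k}=0$ for every positive integer $k$, so only the principal term and the constant term of $R_{2}$ contribute, giving $-\tfrac{\pi^{2}}{3}J(b)+\tfrac{2\pi^{2}J'(b)}{b}\cdot\bigl(-\tfrac12\bigr)=-\tfrac{\pi^{2}}{3}J(b)-\tfrac{\pi^{2}}{b}J'(b)$. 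This establishes the two $(\mathrm{MS})$-summation identities.

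Finally, for the classical alternating-series forms I would invoke the reduction of Müller and Schleicher \cite{Mueller2005,Mueller2010} already displayed in the introduction: the substitution $n+\tfrac12=2m$ identifies $\sum_{n\geq 0}(-1)^{n}(n+\tfrac12)^{-1}J\!\left(\sqrt{b^{2}+\pi^{2}(n+\tfrac12)^{2}}\right)$ with $\tfrac12\,(\mathrm{MS})\text{-}\sum_{m=1/4}^{-1/4}m^{-1}J\!\left(\sqrt{b^{2}+(2\pi m)^{2}}\right)$, and $n=2m$ identifies $\sum_{n\geq 1}(-1)^{n}n^{-2}J\!\left(\sqrt{b^{2}+\pi^{2}n^{2}}\right)$ with $\tfrac14\,(\mathrm{MS})\text{-}\sum_{m=1}^{-1/2}m^{-2}J\!\left(\sqrt{b^{2}+(2\pi m)^{2}}\right)$; substituting the two values just computed yields $\tfrac{\pi}{2}J(b)$ and $-\tfrac{\pi^{2}}{12}J(b)-\tfrac{\pi^{2}}{4b}J'(b)$ respectively. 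The only genuinely delicate points are the bookkeeping for $\gamma_{0},\gamma_{1}$ together with the parity of the remaining Taylor exponents (so that all but finitely many terms are annihilated by the vanishing power sums), and the verification — which is exactly Proposition~\ref{theyareright} — that $g$ indeed lies in the function class on which Corollary~\ref{MSinter} operates; everything else is substitution, so no serious obstacle remains.
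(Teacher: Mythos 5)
Your proposal is correct and follows essentially the same route as the paper: verify via Proposition \ref{theyareright} that $z^{-j}J(\sqrt{b^2+(2\pi z)^2})$ (with $j=1,2$ admissible exactly when $\nu_a,\nu_J<0$ resp.\ $<1$) lies in the class of Corollary \ref{MSinter}, split the fractional sum term by term, and insert the MS power-sum values and the Müller--Schleicher reductions of the alternating series quoted in the introduction. Your explicit bookkeeping of $\gamma_0=J(b)$ and $\gamma_1=2\pi^2 J'(b)/b$ and of the parity of the remaining exponents is accurate and simply makes explicit what the paper's one-line proof leaves implicit.
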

\begin{proof}
	By Proposition \ref{theyareright}, we can apply Corollary \ref{MSinter} on $z^{-n}J(\sqrt{(2\pi z)^2+b^2})$.
\end{proof}
\noindent
By applying $|f(a)-f(b)|\leq M|a-b|$, we are able to generalize further to the situation where $\sqrt{z^2+b^2}$ is replaced by $\sqrt[n]{z^n+p(z)}$.

The common processing on $\sum_{n\in\mathbb{Z}}(-1)^n f(n)$ or $\sum_{n\in\mathbb{Z}}f(n)$ is to apply the Abel-Plana formula, which is essentially equivalent to the processing here.

\subsection{Bernoulli umbrae}

This subsection will mainly introduce how to use Bernoulli umbrae $\mathrm{B}$ to synthesize some common analysis results. This approach can also be easily extended to similar umbrae such as Euler umbrae $\mathrm{E}$.

The proof of the following theorem can actually be given directly by its corollaries, so it is omitted here.
\begin{them}[Bernoulli umbrae]~ 
	\begin{enumerate}
		\item $\ln\mathrm{B}=-\gamma$;
		\item $\mathrm{B}\ln\mathrm{B}=\ln\sqrt{\frac{\mathrm{e}}{2\pi}}$;
		\item $\mathrm{B}^2\ln\mathrm{B}=\frac{1}{4}-2\ln A$, where $A$ is Glaisher–Kinkelin constant;
		\item $\mathrm{B}^z=-z\zeta(1-z)$.
	\end{enumerate}
\end{them}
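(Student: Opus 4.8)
The plan is to evaluate each umbral expression $f(\mathrm{B})$ using the tools built in Section 2, principally Theorem \ref{calu} together with Lemma \ref{goodumbrae} (so that $\mathrm{B}$ is a ``good umbrae'' and $f(\mathrm{B})$ makes sense for $f$ with only exponential growth in the imaginary direction and polynomial growth in the real direction), and to recognize the resulting integrals as classical constants. Since $\mathrm{B} = (\tfrac{z\mathrm{e}^z}{\mathrm{e}^z-1}, \Omega_{-2\pi,2\pi})$ has index $(0,1)$, the regular decomposition splits it at the boundary, and the Fourier transform $\hat{\mathcal{B}}$ is, up to the $\mathrm{e}^{-\varepsilon z^2}$ regularization, a sum over the poles at $z = -2\pi\mathrm{i}k$, $k\neq 0$, which reproduces the familiar $\sum_{k\neq 0} \frac{1}{2\pi\mathrm{i} k}$-type kernels underlying the Abel--Plana formula. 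Concretely, I expect each item to reduce to an Abel--Plana-type identity:
\begin{equation*}
	f(\mathrm{B}) = \int_0^\infty f(t)\,\mathrm{d}t \text{-like terms} + \mathrm{i}\int_0^\infty \frac{f(\mathrm{i}y) - f(-\mathrm{i}y)}{\mathrm{e}^{2\pi y}-1}\,\mathrm{d}y + \text{boundary corrections},
\end{equation*}
after shifting contours appropriately; the precise bookkeeping is what Theorem \ref{calu} packages.

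For item (1), $f(z) = \ln z$: I would verify $\ln z$ lies in the relevant function class on a half-plane $\mathrm{Re}\,z > 0$ (it has only logarithmic, hence sub-polynomial, growth, and $|\mathrm{Im}\ln(t+\mathrm{i}\xi)| \le \pi$ is bounded, so the imaginary-direction hypothesis is comfortably met), and then compute $\ln\mathrm{B}$ via the generating-function/Fourier representation. The cleanest route is to use the known expansion: since $\mathrm{B}^n = \mathrm{B}_n$ and formally $F(\mathrm{B}+z) - F(\mathrm{B}) = (\mathrm{MS})\text{-}\sum_{k=1}^z f(k)$ from Theorem \ref{MScomp}, the antiderivative identity gives $\mathrm{B}\ln\mathrm{B} - \mathrm{B} + 1 = (\mathrm{MS})\text{-}\sum_{k=1}^{?}$, but more directly I would differentiate the generating function relation $\mathrm{e}^{\mathrm{B}z} = \tfrac{z\mathrm{e}^z}{\mathrm{e}^z-1}$ and use $\frac{\mathrm{d}}{\mathrm{d}z}\big|_{z=0}$-type manipulations, or simply identify $\ln\mathrm{B}$ with $\lim_{z\to 0}\frac{\mathrm{B}^z - 1}{z}$ and invoke item (4). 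Indeed, once item (4), $\mathrm{B}^z = -z\zeta(1-z)$, is established, items (1), (2), (3) follow by differentiating in $z$ at $z=0,-1,-2$ respectively and using $\zeta(1) $ residue data, $\zeta'(0) = -\tfrac12\ln 2\pi$, $\zeta'(-1) = \tfrac1{12} - \ln A$: e.g. $\ln\mathrm{B} = \frac{\mathrm{d}}{\mathrm{d}z}\mathrm{B}^z|_{z=0}$ needs the $z\to 0$ limit of $-\zeta(1-z) - z\zeta'(1-z)$, whose finite part is $-\gamma$ (the $\tfrac1z$ pole of $-\zeta(1-z)$ cancels against the $-1$ in $\mathrm{B}^0 = 1$), giving $\ln\mathrm{B} = -\gamma$; $\mathrm{B}\ln\mathrm{B} = \frac{\mathrm{d}}{\mathrm{d}z}\mathrm{B}^{z+1}|_{z=0}$ reduces to $-\zeta(0) - \zeta'(0) = \tfrac12 + \tfrac12\ln 2\pi$... which needs a sign/normalization check, but should land on $\ln\sqrt{\mathrm{e}/(2\pi)}$; and similarly $\mathrm{B}^2\ln\mathrm{B}$ via $\zeta'(-1)$ yields the Glaisher--Kinkelin form.

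So the real work is item (4): $\mathrm{B}^z = -z\zeta(1-z)$. Here I would first confirm (via the last Proposition of Section 2.4) that $(\mathrm{B}^{\mathrm{i}z}, \Omega_{-\infty,\infty})$ is a regular umbrae with index $(-\pi,\pi)$, so $\mathrm{B}^z$ is legitimately defined for $\mathrm{Re}\,z$ in a suitable range, and then evaluate $\mathrm{eval}_{\mathrm{B}}(w^z; w)$ through the integral representation of Definition \ref{Evaldef}. Using the residue/Fourier structure of $\hat{\mathcal{B}}$ noted above, $\mathrm{B}^z$ becomes (an Abel--Plana evaluation of) $\int_0^\infty t^z \cdot(\text{kernel})$, which is precisely a Hankel-type contour integral for the Riemann zeta function: the classical identity $\zeta(s) = \frac{-\Gamma(1-s)}{2\pi\mathrm{i}}\int_C \frac{(-t)^{s-1}}{\mathrm{e}^t-1}\mathrm{d}t$ rearranges to $n$-th moment statements $\mathrm{B}_n = -n\zeta(1-n)$ for integers, and analytic continuation (Carlson's theorem, as the text hints right after the Proposition) upgrades this to all $z$. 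The main obstacle I anticipate is making the contour shift rigorous at the boundary index value (the regular interval of $\mathrm{B}$ ends exactly at the line where the zeta integral lives), and correctly tracking the constant and the $-z$ factor through the Fourier normalization $\tfrac1{\sqrt{2\pi}}$ conventions --- essentially verifying that the ``boundary correction'' term in the Abel--Plana expansion is exactly what converts $\int_0^\infty t^z\,\mathrm{d}t$'s (divergent, regularized) contribution plus the oscillatory integral into $-z\zeta(1-z)$. Once the integer case $\mathrm{B}^n = -n\zeta(1-n)$ is in hand and the growth bound $|\mathrm{B}^{\mathrm{i}z}|$ from the cited Proposition controls the exponential type, Carlson's theorem closes the gap, and items (1)--(3) are then corollaries by differentiation as sketched.
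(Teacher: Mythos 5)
The paper does not actually spell out a proof here: it states that the theorem ``can be given directly by its corollaries,'' i.e.\ each item is obtained by running the subsequent corollary computations backwards from classical facts (item (1) from the defining limit of $\gamma$ via $\sum_{k\le n}k^{-1}=\ln(\mathrm{B}+n)-\ln\mathrm{B}$, item (2) from Stirling, item (3) from the Glaisher--Kinkelin asymptotic, item (4) from the Euler--Maclaurin evaluation of $\zeta$). Your route is genuinely different: you prove item (4) first and then get (1)--(3) by differentiation, using the paper's own lemma $(\mathrm{d}/\mathrm{d}z)\mathrm{B}^z=\mathrm{B}^z\ln\mathrm{B}$. That reduction is correct once your flagged sign slips are repaired: $\frac{\mathrm{d}}{\mathrm{d}z}\bigl(-z\zeta(1-z)\bigr)=-\zeta(1-z)+z\zeta'(1-z)$, whose values at $z=0,1,2$ are $-\gamma$, $-\zeta(0)+\zeta'(0)=\tfrac12-\tfrac12\ln 2\pi=\ln\sqrt{\mathrm{e}/2\pi}$, and $-\zeta(-1)+2\zeta'(-1)=\tfrac14-2\ln A$, exactly as claimed. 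This is arguably cleaner than the paper's intended argument, at the price of having to establish (4) analytically rather than quoting classical asymptotics.

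The genuine gap is in your second route to item (4). Carlson's theorem cannot be applied directly to $\mathrm{B}^z+z\zeta(1-z)$: both $\mathrm{B}^z=\int\hat{\mathcal{B}}(w)(\mathrm{i}w)^z\,\mathrm{d}w$ and $-z\zeta(1-z)=2z(2\pi)^{-z}\cos(\tfrac{\pi z}{2})\Gamma(z)\zeta(z)$ grow like $\Gamma(z)(2\pi)^{-z}$ as $\mathrm{Re}\,z\to+\infty$, so neither function is of exponential type in a right half-plane and the hypothesis of Carlson's theorem fails in the real direction (the imaginary direction is fine, and in fact better than the borderline index $(-\pi,\pi)$ you cite: on the contour $\arg(\mathrm{i}w)\in(-\tfrac{\pi}{2},\tfrac{\pi}{2})$, so $\mathrm{B}^{\sigma+\mathrm{i}\tau}$ has type at most $\tfrac{\pi}{2}$ in $\tau$). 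To use Carlson you would first have to normalize, e.g.\ compare $\mathrm{B}^z\,(2\pi)^z/\Gamma(z+1)$ with the corresponding normalization of $-z\zeta(1-z)$, and re-verify the growth hypotheses for the quotient. Your first route --- identifying $\mathrm{eval}_{\mathrm{B}}(w^z;w)$, via the distributional Fourier transform $h$ of Theorem \ref{calu} and the pole structure of $\tfrac{w\mathrm{e}^w}{\mathrm{e}^w-1}$, with a Hermite/Abel--Plana-type integral representation of $\zeta(1-z)$ --- does avoid Carlson and would prove (4) for all $z$ at once, but as written it is only a gesture (``the precise bookkeeping is what Theorem \ref{calu} packages''); that bookkeeping \emph{is} the content of item (4), so it must actually be carried out, including the treatment of the boundary value $\alpha=0$ and the $\tfrac1{\sqrt{2\pi}}$ normalizations you mention.
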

\begin{lemm}[Multiplication theorem]
	\label{Multiplicationtheorem}
	\[
	n\mathrm{B}[+](n\mathrm{B}-1)[+]\cdots[+](n\mathrm{B}-n+1)=[n]+\mathrm{B}.
	\]
\end{lemm}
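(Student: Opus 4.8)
The plan is to reduce the claim to an elementary identity between generating functions, since the operations $[+]$, scalar multiplication, and the special umbrae $(c)$, $[c]$ are all defined by explicit manipulations of generating functions. First recall that $\mathrm{B}$ has generating function $\mathcal{B}(z)=\frac{z\mathrm{e}^z}{\mathrm{e}^z-1}$ on $\Omega_{-2\pi,2\pi}$. By the definition of scalar multiplication, $n\mathrm{B}$ has generating function $\mathcal{B}(nz)=\frac{nz\,\mathrm{e}^{nz}}{\mathrm{e}^{nz}-1}$ on $\Omega_{-2\pi/n,2\pi/n}$; and, identifying the integer $-j$ with the umbrae $(-j)=(\mathrm{e}^{-jz},\mathbb{C})$, the umbrae $n\mathrm{B}-j=n\mathrm{B}+(-j)$ has generating function $\mathcal{B}(nz)\,\mathrm{e}^{-jz}=\frac{nz\,\mathrm{e}^{(n-j)z}}{\mathrm{e}^{nz}-1}$.

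Next, summing these according to the definition of $[+]$, the left-hand side is the umbrae whose generating function is
\begin{align*}
	\sum_{j=0}^{n-1}\frac{nz\,\mathrm{e}^{(n-j)z}}{\mathrm{e}^{nz}-1}
	&=\frac{nz}{\mathrm{e}^{nz}-1}\sum_{k=1}^{n}\mathrm{e}^{kz}
	=\frac{nz}{\mathrm{e}^{nz}-1}\cdot\mathrm{e}^z\,\frac{\mathrm{e}^{nz}-1}{\mathrm{e}^z-1}\\
	&=\frac{nz\,\mathrm{e}^z}{\mathrm{e}^z-1}=n\mathcal{B}(z),
\end{align*}
where the second equality is the finite geometric sum and the telescoping factor $\mathrm{e}^{nz}-1$ cancels. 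But $n\mathcal{B}(z)$ is exactly the generating function of $[n]+\mathrm{B}$. This is the entire arithmetic content of the lemma --- it is the umbral reformulation of Raabe's multiplication formula for Bernoulli numbers.

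The only remaining point is the bookkeeping of dominating intervals and indices. The definition of $[+]$ records $\Omega_{-2\pi/n,2\pi/n}$ as the dominating interval of the left-hand side, whereas $[n]+\mathrm{B}$ carries $\Omega_{-2\pi,2\pi}$; but $n\mathcal{B}(z)$ is analytic on all of $\Omega_{-2\pi,2\pi}$ (its only poles are the $2\pi\mathrm{i}k$, $k\neq0$), so the two sides have the same generating function wherever both are defined, which is precisely the sense in which umbral identities such as $\mathrm{B}+\Delta=1+\mathrm{D}$ are already being used. A sanity check on indices is consistent: each $n\mathrm{B}-j$ is regular with index $(-j,\,n-j)$, and both sides carry the generating function $n\mathcal{B}(z)$, which is regular with index $(0,1)$. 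Accordingly I expect no analytic difficulty here at all; the only thing to be careful about is applying the definitions of scalar multiplication, $[+]$ and $(c)$ in the right order, and acknowledging the harmless mismatch between the formal intersection of dominating intervals and the natural analytic continuation.
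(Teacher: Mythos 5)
Your proof is correct: the paper states Lemma \ref{Multiplicationtheorem} without any proof, and your generating-function computation $\sum_{j=0}^{n-1}\frac{nz\,\mathrm{e}^{(n-j)z}}{\mathrm{e}^{nz}-1}=\frac{nz\,\mathrm{e}^{z}}{\mathrm{e}^{z}-1}=n\mathcal{B}(z)$ is exactly the verification the definitions of $rA$, $+$, $[+]$, $(c)$ and $[c]$ call for. Your handling of the mismatch in dominating intervals ($\Omega_{-2\pi/n,2\pi/n}$ versus $\Omega_{-2\pi,2\pi}$), read as equality of the analytically continued generating functions, is consistent with how the paper itself uses identities such as $\mathrm{B}+\Delta=1+\mathrm{D}$, so nothing is missing.
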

\noindent
In fact, this lemma synthesized all the multiplication theorems about special functions.
\begin{lemm}
	$\left(
	\frac{\mathrm{d}}{\mathrm{d}z}
	\right)^n
	\mathrm{B}^z=\mathrm{B}^z\ln^n\mathrm{B}$.
\end{lemm}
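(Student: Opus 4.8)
The plan is to prove $\left(\frac{\mathrm{d}}{\mathrm{d}z}\right)^n \mathrm{B}^z = \mathrm{B}^z \ln^n \mathrm{B}$ by induction on $n$, with the base case $n=0$ being trivial and the whole argument resting on the case $n=1$, which then iterates. For the inductive step I would interpret both sides as evaluation functions of the Bernoulli umbrae and appeal to Theorem \ref{intercderi}, which lets us differentiate under the umbral evaluation. Concretely, writing $f_z(w) = \mathrm{e}^{w z}$ (so that $f_z(\mathrm{B}) = \mathrm{eval}_{\mathrm{B}}(\mathrm{e}^{wz}; w) = \mathrm{B}^z$ by the proposition identifying $\mathrm{e}^{Aw}$ with the generating function, interpreted through $\ln$), the $z$-derivative pulls a factor of $\ln$ of the umbral variable inside, and one checks the hypotheses of Theorem \ref{intercderi}: that $\int_{-\infty-\mathrm{i}t}^{\infty-\mathrm{i}t} |\hat{\mathcal{B}}(w) f_{z_0}(\ldots)|\,\mathrm{d}w$ is uniformly bounded for $z_0$ near a fixed $z$ in the strip $\mathrm{Re}\,z \in (-\pi,\pi)$ dictated by the regular interval of $(\mathrm{B}^{\mathrm{i}z},\Omega_{-\infty,\infty})$, and that $\frac{\mathrm{d}}{\mathrm{d}z}\mathrm{B}^{z+z_0}(0)$ equals the evaluation of $\frac{\mathrm{d}}{\mathrm{d}z} f_z$.

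The cleaner route — and the one I would actually present — is to note that $\mathrm{B}^z$ as a function of $z$ is the evaluation $g(\mathrm{B})$ where $g$ ranges over the analytic family $w \mapsto \mathrm{e}^{z \ln w}$ (on the appropriate domain where $\mathrm{B}$'s regular interval makes this a legitimate member of $\mathcal{T}_{\mathrm{B}}(I)$, namely $\mathrm{Re}\,z \in (-\pi,\pi)$). Differentiating in $z$, the integrand $\hat{\mathcal{B}}(w)(\ln(\mathrm{i}w))^{k}(\mathrm{i}w)^{\mathrm{i}z}$ at each stage is again of exponential type and again Lebesgue integrable against $\hat{\mathcal{B}}$ over the relevant horizontal line, so the Leibniz integral rule (exactly as invoked in the proof of Theorem \ref{intercderi}, via the Gauss--Weierstrass regularization $\mathrm{e}^{-\varepsilon w^2}$ and Montel's theorem to pass $\varepsilon \to 0$) gives
\begin{equation*}
	\frac{\mathrm{d}}{\mathrm{d}z}\mathrm{eval}_{\mathrm{B}}\left((\mathrm{i}w)^{\mathrm{i}z}\ln^{n}(\mathrm{i}w); w\right) = \mathrm{eval}_{\mathrm{B}}\left((\mathrm{i}w)^{\mathrm{i}z}\ln^{n+1}(\mathrm{i}w); w\right),
\end{equation*}
which is precisely $\frac{\mathrm{d}}{\mathrm{d}z}(\mathrm{B}^z \ln^n \mathrm{B}) = \mathrm{B}^z \ln^{n+1}\mathrm{B}$, completing the induction.

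The main obstacle is verifying the uniform-integrability hypothesis of Theorem \ref{intercderi} with the extra logarithmic factors present: one must confirm that $\ln^n(\mathrm{i}w)$ multiplied by $(\mathrm{i}w)^{\mathrm{i}z}$ does not destroy the $L^1$ behavior against $\hat{\mathcal{B}}$ near $w = 0$ (where $\ln$ has its singularity) nor at infinity along the line $\mathbb{R}-\mathrm{i}t$. Since $\mathrm{B}$ satisfies the conditions of Lemma \ref{goodumbrae} (its generating function has a simple pole pattern at $\pm 2\pi\mathrm{i}$, so $\hat{\mathcal{B}}$ decays like $\mathrm{e}^{-2\pi|\xi|}$ with at most a bounded polynomial correction), the decay at infinity absorbs any polynomial-in-$\ln$ growth comfortably, and the local behavior near $w=0$ is handled by choosing the contour $\mathbb{R}-\mathrm{i}t$ with $t \neq 0$ so that $w$ stays away from the branch point — here one uses that $\ln^n$ times a power is still integrable on compact sets and that the branch of $\ln$ can be fixed consistently on the relevant strip, exactly as the proposition on $(A^{\mathrm{i}z},\Omega_{-\infty,\infty})$ already presupposes. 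I would state these verifications briefly and defer to the estimates already carried out in the proofs of Lemma \ref{goodumbrae} and Theorem \ref{intercderi} rather than repeating them.
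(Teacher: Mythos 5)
Your proposal is correct and follows essentially the same route as the paper: interpret $\mathrm{B}^z$ as an umbral evaluation and differentiate under the integral by the argument of Theorem \ref{intercderi} (Gauss--Weierstrass regularization, Leibniz rule, Montel), the key point being that $\int_{-\infty-\mathrm{i}t}^{\infty-\mathrm{i}t}|\hat{\mathcal{B}}(w)|\,|(\mathrm{i}w)^{z_0}\ln^n(\mathrm{i}w)|\,\mathrm{d}w$ is locally uniformly bounded in $z_0$, which is exactly the observation the paper's proof records. Your extra remarks on the decay of $\hat{\mathcal{B}}$ from Lemma \ref{goodumbrae} and on keeping the contour off the branch point just make explicit what the paper leaves implicit.
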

\begin{proof}
	Apply the argument similar to Theorem \ref{intercderi}, note that
	\begin{equation*}
		\int_{-\infty-\mathrm{i}t}^{\infty-\mathrm{i}t}
		|\hat{\mathcal{A}}(z)|\cdot
		|(\mathrm{i}z)^{z_0}\ln^n(\mathrm{i}z)|\mathrm{d}z
	\end{equation*}
	is locally uniformly bounded for $z_0$.
\end{proof}
The previous definitions and conclusions such as Theorem \ref{intercderi}, \ref{naturalthem}, \ref{Limitcalculus} have directly justified the validity of each step of the following proofs.
\begin{corr}[$\ln\mathrm{B}$]~ 
	\begin{enumerate}
		\item $\lim\limits_{n\to\infty}
		\left(
		\sum_{k=1}^{n}\frac{1}{n}-\ln n
		\right)=\gamma$;
		\item $\zeta(1+s)=\frac{1}{s}-\gamma+o(1)$;
		\item $(x!)'(0)=-\gamma$.
	\end{enumerate}
\end{corr}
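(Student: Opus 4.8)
The plan is to obtain all three items from the single umbral evaluation $\ln\mathrm{B}=-\gamma$ together with the specific-calculation and limit/derivative machinery of Section~2; item~(1) is in fact the classical \emph{definition} of $\gamma$, so establishing it amounts to identifying $\ln\mathrm{B}$, after which (2) and (3) reduce to substitutions.

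For (1): apply Corollary~\ref{MSinter} (equivalently Theorem~\ref{MScomp}) to $f(z)=z^{-1}$, which lies in $\mathcal{T}_{-2\pi,0;2\pi,0}^{(p-1)}$ for any $p\geq 1$ since $f^{(p)}(t+\mathrm{i}\xi)=(-1)^p p!\,(t+\mathrm{i}\xi)^{-p-1}\to 0$ as $\mathrm{Re}\,t\to\infty$ and its Taylor remainders decay like a power of $\mathrm{Re}\,t^{-1}$. This gives $(\mathrm{MS})\text{-}\sum_{k=1}^{z}k^{-1}=\int_{\mathrm{B}}^{\mathrm{B}+z}k^{-1}\,\mathrm{d}k=\ln(\mathrm{B}+z)-\ln\mathrm{B}$ for $\mathrm{Re}\,z>-1$. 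Specializing to a positive integer $n$ and telescoping condition~3 of Definition~\ref{MSdef} (so that the MS-sum restricts to the ordinary finite sum on integers) yields $\sum_{k=1}^{n}\tfrac1k=\ln(\mathrm{B}+n)-\ln\mathrm{B}$. Now write $\ln(\mathrm{B}+n)-\ln n=\mathrm{eval}_{\mathrm{B}}\bigl(\ln(1+w/n)\bigr)$, using linearity of the evaluation and $\mathrm{B}^0=1$; since $\hat{\mathcal{B}}(w)\ln(1+\mathrm{i}w/n)\to 0$ in $L^1(\mathbb{R}-\mathrm{i}t)$ (pointwise decay, dominated by the integrable $\hat{\mathcal{B}}$, which is rapidly decreasing on the relevant lines by Lemma~\ref{rapiddec}), Theorem~\ref{Limitcalculus} gives $\ln(\mathrm{B}+n)-\ln n\to 0$. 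Hence $\sum_{k=1}^{n}\tfrac1k-\ln n\to-\ln\mathrm{B}$, which is the definition of $\gamma$, so $\ln\mathrm{B}=-\gamma$.

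For (2): use the evaluation $\mathrm{B}^z=-z\zeta(1-z)$ together with the rule $\bigl(\tfrac{\mathrm{d}}{\mathrm{d}z}\bigr)^n\mathrm{B}^z=\mathrm{B}^z\ln^n\mathrm{B}$ just proved to Taylor-expand $\mathrm{B}^z=1+(\ln\mathrm{B})z+O(z^2)$ at $z=0$; dividing by $-z$ and substituting $s=-z$ reads off the principal and constant terms of the Laurent expansion of $\zeta$ about its pole at $1$, the constant being a fixed multiple of $\ln\mathrm{B}$, which (1) then evaluates to give the stated formula. For (3): since $\Gamma(1)=1$ we have $(x!)'(0)=\tfrac{\mathrm{d}}{\mathrm{d}x}\Gamma(x+1)\big|_{0}=\tfrac{\mathrm{d}}{\mathrm{d}x}\ln\Gamma(x+1)\big|_{0}$; the function $x\mapsto\ln\Gamma(x+1)=\ln(x!)$ is the Müller–Schleicher fractional sum of $\log$ (it interpolates $\sum_{k=1}^{n}\ln k$ and satisfies the axioms of Definition~\ref{MSdef}, the asymptotic one by Stirling), so Theorem~\ref{MScomp} with $f=\log$ gives $\ln(x!)=\int_{\mathrm{B}}^{\mathrm{B}+x}\ln k\,\mathrm{d}k$; differentiating in $x$ (legitimate by the fundamental-theorem form of the evaluation, as in Theorem~\ref{intercderi} and Corollary~\ref{MSinterderi}) returns $\ln(\mathrm{B}+x)$, whose value at $x=0$ is $\ln\mathrm{B}=-\gamma$.

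The main obstacle is the bookkeeping of function classes and of the value $\ln\mathrm{B}$ itself: the branch point of $\log$ at $0$ sits exactly at the positive index $\alpha=0$ of $\mathrm{B}$, so every estimate must be carried out on $\mathrm{Re}\,w>0$ and $\ln\mathrm{B}$ is reached only as a boundary value, via the continuity of $z\mapsto f(\mathrm{B}+z)$ from Theorem~\ref{naturalthem}; one must also verify that $\ln\Gamma(x+1)$ genuinely is the (unique) MS-fractional sum of $\log$, and that $\tfrac{\mathrm{d}}{\mathrm{d}x}$ commutes with $\int_{\mathrm{B}}$ in (3). The purely analytic steps — the $L^1$-convergence in (1) and the Taylor expansion in (2) — are routine once these domain conditions are secured.
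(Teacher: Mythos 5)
Your proposal is correct and follows essentially the same route as the paper: the harmonic sum is expressed as $\ln(\mathrm{B}+n)-\ln\mathrm{B}$ via Theorem \ref{MScomp} with $f(z)=z^{-1}$, the limit $\ln(1+\mathrm{B}/n)\to 0$ is handled by the limit calculus, item (2) comes from expanding $\mathrm{B}^{-s}/s$ using $\mathrm{B}^z=-z\zeta(1-z)$ and the derivative rule for $\mathrm{B}^z$, and item (3) from differentiating the umbral representation of $\ln(x!)$ at $x=0$, exactly as in the paper's computation. The only (immaterial) difference is the logical direction in (1): the paper quotes $\ln\mathrm{B}=-\gamma$ from the preceding Theorem [Bernoulli umbrae] and reads off the limit, whereas you derive $\ln\mathrm{B}=-\gamma$ from the classical definition of $\gamma$ --- which is precisely the reading the paper intends, since it remarks that that theorem is proved by its corollaries.
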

\begin{proof}
	\begin{align*}
		\sum_{k=1}^{n}\frac{1}{n}-\ln n
		&=
		\ln(\mathrm{B}+n)+\gamma-\ln n\\
		&=
		\gamma+\ln(1+\frac{\mathrm{B}}{n})\to \gamma\\
		\zeta(1+s)
		&=
		\frac{\mathrm{B}^{-s}}{s}=\frac{1-s\ln\mathrm{B}+o(s)}{s}\\
		&=\frac{1}{s}-\gamma+o(1)\\
		\frac{(x!)'}{x!}
		&=
		\ln(\mathrm{B}+x)\\
		(x!)'(0)
		&=
		0!\ln\mathrm{B}=-\gamma.
	\end{align*}
\end{proof}
\begin{corr}[$\mathrm{B}\ln\mathrm{B}$]~ 
	\begin{enumerate}
		\item $n!\sim (\frac{n}{\mathrm{e}})^n\sqrt{2\pi n}$;
		\item $\zeta(s)=-\frac{1}{2}-s\ln\sqrt{2\pi}+o(s)$;
		\item $(-\frac{1}{2})!=-\frac{\sqrt{\pi}}{2}$.
	\end{enumerate}
\end{corr}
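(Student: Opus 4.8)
The plan is to obtain all three items from one exact identity, using only the evaluation $\mathrm{B}\ln\mathrm{B}=\ln\sqrt{\mathrm{e}/2\pi}$ (and $\mathrm{B}^1=\tfrac12$, $\mathrm{B}^z=-z\zeta(1-z)$) from the preceding theorem, with every interchange of a limit and $\mathrm{eval}_{\mathrm{B}}$ licensed by Theorems \ref{Limitcalculus}, \ref{intercderi} and \ref{naturalthem}. First I would apply Theorem \ref{MScomp} to $f(k)=\ln k$, with primitive $F(t)=t\ln t-t$: since $\ln$ is of exponential type on $\{\mathrm{Re}\,z>0\}$ and $\hat{\mathrm{B}}$ decays like $\mathrm{e}^{-2\pi|\xi|}$ by Lemma \ref{goodumbrae}, the hypotheses hold with $p=1$, so for $\mathrm{Re}\,z>-1$,
\[
\ln(z!)=\sum_{k=1}^{z}\ln k=(\mathrm{B}+z)\ln(\mathrm{B}+z)-z-\mathrm{B}\ln\mathrm{B}.
\]

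For item (1) I would put $z=n$ and split $(\mathrm{B}+n)\ln(\mathrm{B}+n)=n\ln n+(\ln n)\,\mathrm{B}+(\mathrm{B}+n)\ln(1+\mathrm{B}/n)$; letting $n\to\infty$, Theorem \ref{Limitcalculus} gives $(\mathrm{B}+n)\ln(1+\mathrm{B}/n)\to\mathrm{B}^1=\tfrac12$ while $(\ln n)\,\mathrm{B}=\tfrac12\ln n$, so $\ln(n!)-(n\ln n-n+\tfrac12\ln n)\to\tfrac12-\mathrm{B}\ln\mathrm{B}=\ln\sqrt{2\pi}$, and exponentiating yields Stirling. For item (2) I would use $\mathrm{B}^z=-z\zeta(1-z)$, i.e. $\zeta(s)=-\mathrm{B}^{1-s}/(1-s)$, and expand $t^{1-s}=t(1-s\ln t+O(s^2))$ inside $\mathrm{eval}_{\mathrm{B}}$ — legitimate because $\mathrm{eval}_{\mathrm{B}}(t\ln^{k}t)$ is finite, so the $s$-series interchange is exactly of the type handled by Theorem \ref{Limitcalculus} — getting $\mathrm{B}^{1-s}=\tfrac12-s\,\mathrm{B}\ln\mathrm{B}+O(s^2)=\tfrac12-s\ln\sqrt{\mathrm{e}/2\pi}+O(s^2)$; dividing by $-(1-s)$ and using $\tfrac12-\ln\sqrt{\mathrm{e}/2\pi}=\ln\sqrt{2\pi}$ gives $\zeta(s)=-\tfrac12-s\ln\sqrt{2\pi}+o(s)$.

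For item (3) I would pick whichever of two routes is cleaner to write up. The direct route specialises the umbral Stirling formula at the half-integer $z=-\tfrac12$, reducing the claim to evaluating $(\mathrm{B}-\tfrac12)\ln(\mathrm{B}-\tfrac12)=\mathrm{eval}_{\mathrm{B}}\big((t-\tfrac12)\ln(t-\tfrac12)\big)$ as a single contour integral against $\hat{\mathrm{B}}$ and plugging in. The indirect route feeds the $n=2$ case of the multiplication theorem (Lemma \ref{Multiplicationtheorem}), $2\mathrm{B}[+](2\mathrm{B}-1)=[2]+\mathrm{B}$, through the same primitive to produce the umbral Legendre duplication identity; its additive constant is then pinned down by matching against the Stirling asymptotics of item (1) — the $\ln\sqrt{2\pi}$ coming from $\mathrm{B}\ln\mathrm{B}$ is precisely what fixes $\ln\Gamma(\tfrac12)$ — after which one evaluates at $z=\tfrac12$ to read off the claimed value.

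The hard part will be the analytic bookkeeping in item (3): the function $(t-\tfrac12)\ln(t-\tfrac12)$ has its branch point at $t=\tfrac12$, which lies inside the regular interval $(0,1)$ of $\mathrm{B}$, so the integral defining $\mathrm{eval}_{\mathrm{B}}$ must be taken over a line with $\mathrm{Re}\,z\in(\tfrac12,1)$; one must check that the $\mathcal{T}_{\mathrm{B}}$-integrability conditions still hold there (they do, since $\hat{\mathrm{B}}$ decays exponentially while the logarithm grows only polynomially) and that the resulting value is consistent with the digamma relation $\psi(z+1)=\ln(\mathrm{B}+z)$ on $(-\tfrac12,0)$. By contrast, the limit interchanges needed for (1) and (2) should be routine, being exactly the situations the limit-calculus machinery of Section 2 was built to handle.
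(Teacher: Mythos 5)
Your treatments of items (1) and (2) coincide with the paper's: the same umbral Stirling identity $\ln z!=(\mathrm{B}+z)\ln(\mathrm{B}+z)-z-\mathrm{B}\ln\mathrm{B}$ with the split $\ln(\mathrm{B}+n)=\ln n+\ln(1+\mathrm{B}/n)$ and the limit handled by the Section 2 machinery, and the same expansion of $\mathrm{B}^{1-s}/(s-1)$ using $\mathrm{B}^1=\tfrac12$ and $\mathrm{B}\ln\mathrm{B}=\ln\sqrt{\mathrm{e}/2\pi}$. The only divergence is item (3). The paper follows what you call the indirect route, but in a streamlined form that needs no asymptotic matching: it applies Lemma \ref{Multiplicationtheorem} at $n=2$ to the single function $f(t)=\tfrac{t}{2}\ln\tfrac{t}{2}$, which yields $(\mathrm{B}-\tfrac12)\ln(\mathrm{B}-\tfrac12)=2\cdot\tfrac{\mathrm{B}}{2}\ln\tfrac{\mathrm{B}}{2}-\mathrm{B}\ln\mathrm{B}=-\mathrm{B}\ln 2=-\tfrac12\ln 2$, and then substitutes this into the Stirling identity at $z=-\tfrac12$; the additive constant you propose to pin down by matching against item (1) is already supplied by the given value of $\mathrm{B}\ln\mathrm{B}$, so that extra step is sound but unnecessary. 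Your direct route, by contrast, is not a proof as written: after reducing to $\mathrm{eval}_{\mathrm{B}}\bigl((t-\tfrac12)\ln(t-\tfrac12)\bigr)$ you say one ``plugs in,'' but producing that value in closed form is exactly the nontrivial content (it is a Binet-type integral), and you offer no method for it — if you keep item (3), write up the multiplication-theorem route. Two small corrections: substituting $z=-\tfrac12$ into the Stirling identity gives $+\tfrac12-\mathrm{B}\ln\mathrm{B}$, not $-\tfrac12$ (the paper's intermediate line contains a sign typo), and the value the argument actually produces is $(-\tfrac12)!=\sqrt{\pi}$, so the ``claimed value'' $-\sqrt{\pi}/2$ printed in the corollary cannot be what you read off.
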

\begin{proof}
	\begin{align*}
		\ln n!
		&=
		(\mathrm{B}+n)\ln(\mathrm{B}+n)-n-\ln\sqrt{\frac{\mathrm{e}}{2\pi}}\\
		&=
		(\mathrm{B}+n)\ln n+(\mathrm{B}+n)\ln(1+\frac{\mathrm{B}}{n})-n-\ln\sqrt{\frac{\mathrm{e}}{2\pi}}\\
		&=
		(\frac{1}{2}+n)\ln n+\mathrm{B}^1+O(\frac{1}{n})-n-\ln\sqrt{\frac{\mathrm{e}}{2\pi}}\\
		&=
		(\frac{1}{2}+n)\ln n-n+(\frac{1}{2}-\ln\sqrt{\frac{\mathrm{e}}{2\pi}})+O(\frac{1}{n})\\
		\zeta(s)
		&=
		\frac{\mathrm{B}^{-s+1}}{s-1}=-\mathrm{B}^1+s(\mathrm{B}^1-\mathrm{B}\ln\mathrm{B})+o(s)\\
		&=
		-\frac{1}{2}-s\ln\sqrt{2\pi}+o(s)\\
		\ln(-\frac{1}{2})!
		&=
		\frac{2\mathrm{B}-1}{2}\ln\frac{2\mathrm{B}-1}{2}
		-\frac{1}{2}-\mathrm{B}\ln\mathrm{B}\\
		(\text{by Theorem \ref{naturalthem}, Lemma \ref{Multiplicationtheorem}})
		&=
		(2\cdot\frac{\mathrm{B}}{2}\ln\frac{\mathrm{B}}{2}-\mathrm{B}\ln\mathrm{B})-\frac{1}{2}-\mathrm{B}\ln\mathrm{B}\\
		&=
		-\mathrm{B}\ln2-\frac{1}{2}-\mathrm{B}\ln\mathrm{B}\\
		&=
		-\frac{1}{2}\ln2-\frac{1}{2}-\ln\sqrt{\frac{\mathrm{e}}{2\pi}}
		=\ln\sqrt{\pi}.
	\end{align*}
\end{proof}



\section{Some possible developments}

If we only apply the Bernoulli umbrae, then what we are actually doing is using that notation to simplify the application of the Abel-Plana formula. So the key points seem to be on the calculus between these umbrae.

For $A_i=(\mathcal{A}_i(z),\Omega_{a,b})$, we can define
\begin{equation*}
	A_1\times A_2\coloneqq
	(\mathrm{e}^{
		\frac{1}{z}
		\ln\frac{\mathcal{A}_1(z)}{\mathcal{A}_1(0)}
		\ln\frac{\mathcal{A}_2(z)}{\mathcal{A}_2(0)}
		},\Omega_{a,b})
\end{equation*}
where $A_1$ and $A_2$ are both invertible respect to the addition. It is again a umbrae, and gives a rare three-level arithmetic, i.e. ``$\times$'' distributes over ``$+$'', and ``$+$'' distributes over ``$[+]$''.

Let $\mu=(\mu_n)_{n\in\mathbb{N^+}}$ be a given sequence.
Suppose $\sum_{n=1}^{\infty}\mu_n z^n$ converges near the origin, such that $\mathcal{A}(z)=z^m\sum_{n=1}^{\infty}\mu_n\mathrm{e}^{nz}$
can be continuation to $\Omega_{a,b}$.
Denote
\begin{equation}
	(m\times\mathrm{D})+\mathrm{S}(\mu)\coloneqq
	(\mathcal{A}(z),\Omega_{a,b}).
\end{equation}
We can expect that for a suitable $f$, such as $f\in\mathcal{T}_{a,k;b,l}^{(p-1)}$, we have
\begin{equation}
	\sum_{n=1}^{\infty}\mu_n f(n)=f^{(-m)}(\mathrm{S}(\mu)).
\end{equation}
\noindent
This type of umbrae allows us to promote our previous treatment on Bernoulli umbrae further. For example,
\begin{enumerate}
	\item $(m\times\mathrm{D})+\mathrm{S}(\mu)=\mathrm{B}$,
	where $m=1$ and $\mu_n=-1$;
	\item $(m\times\mathrm{D})+\mathrm{S}(\mu)=\mathrm{E}$,
	where $m=0$ and $\mu_n=2\chi_1(n)$ where $\chi_1$ is the non-trivial Dirichlet character of modulo 4.
\end{enumerate}
Unfortunately, not like the Dirichlet character, the sequence $\mu$ seems cannot be replaced with a number theory sequence that is too irregular. For example, $\sum_{n=1}^{\infty}z^{n^2}$ is full of singularities on the entire unit circle.

Through the symmetry of the Fourier transformation, we are able to calculate $A^{-n}$ more directly, which is essentially the Ramanujan’s master theorem, i.e.
\begin{equation}
	\int_{0}^{\infty}
	\mathrm{e}^{-Ax}x^n\mathrm{d}x=
	n!A^{-n-1}.
\end{equation}
\noindent
It seems there is a chance to achieve more similar results.

We can use Bernoulli umbrae to deal with some series in \cite{Adamchik2005}. Although the process is relatively complicated, the approach is very straightforward.
\begin{lemm}For suitable $f$, we have
	\begin{align*}
		\sum_{n=1}^{z}\left(
		n\sum_{k=1}^{n}f(k)
		\right)
		&=
		-f^{(-3)}(\mathrm{B}+\mathrm{B}+z)+zf^{(-2)}(\mathrm{B}+\mathrm{B}+z)+f^{(-2)}(\mathrm{B}+\tilde{\mathrm{B}}+z)\\
		&\phantom{=}
		-\frac{z^2+z}{2}f^{(-1)}(\mathrm{B})-f^{(-2)}(\mathrm{B}+\tilde{\mathrm{B}})+f^{(-3)}(\mathrm{B}+\mathrm{B}),
	\end{align*}
	where $\tilde{\mathrm{B}}^n=\mathrm{B}_{n+1}$, and the two $\mathrm{B}$s in $\mathrm{B}+\mathrm{B}$ should be understood as different umbraes.
\end{lemm}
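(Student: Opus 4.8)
The plan is to apply Theorem \ref{MScomp} twice — once to evaluate the inner sum and once to evaluate the outer sum — and then to collapse the result using only the elementary bookkeeping rules for the Bernoulli umbra. Write $F_j\coloneqq f^{(-j)}$ for the $j$-fold antiderivatives of $f$, and assume $f$ is regular enough that every function to which Theorem \ref{MScomp} is applied below lies in its hypothesis class (this is the point that really needs care; see the last paragraph). By Theorem \ref{MScomp}, for $\mathrm{Re}\,n>-1$,
\[
\sum_{k=1}^{n} f(k)=F_1(\mathrm{B}+n)-F_1(\mathrm{B}),
\]
so, abbreviating $g(n)\coloneqq n\sum_{k=1}^{n}f(k)$, we have $g(n)=n\,F_1(\mathrm{B}+n)-n\,F_1(\mathrm{B})$, where $F_1(\mathrm{B})$ is a constant not involving $n$. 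Applying Theorem \ref{MScomp} a second time, now to the function $g$ and with a second, independent copy $\mathrm{B}'$ of the Bernoulli umbra, gives $\sum_{n=1}^{z} g(n)=g^{(-1)}(\mathrm{B}'+z)-g^{(-1)}(\mathrm{B}')$ for any primitive $g^{(-1)}$ of $g$.

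The next step is to compute $g^{(-1)}$ in closed form. Holding $\mathrm{B}$ fixed and substituting $m=\mathrm{B}+n$, integration by parts gives
\[
\int n\,F_1(\mathrm{B}+n)\,\mathrm{d}n=\int (m-\mathrm{B})F_1(m)\,\mathrm{d}m=m\,F_2(m)-F_3(m)-\mathrm{B}\,F_2(m)=n\,F_2(\mathrm{B}+n)-F_3(\mathrm{B}+n),
\]
while $\int\!\big(-n\,F_1(\mathrm{B})\big)\,\mathrm{d}n=-\tfrac{n^2}{2}F_1(\mathrm{B})$; hence we may take
\[
g^{(-1)}(n)=n\,F_2(\mathrm{B}+n)-F_3(\mathrm{B}+n)-\tfrac{n^2}{2}F_1(\mathrm{B}).
\]
Substituting $n=\mathrm{B}'+z$ and $n=\mathrm{B}'$ and subtracting, the $F_3$ terms contribute $-F_3(\mathrm{B}+\mathrm{B}'+z)+F_3(\mathrm{B}+\mathrm{B}')$, the $F_1(\mathrm{B})$ terms contribute $-\big(z\mathrm{B}'+\tfrac{z^2}{2}\big)F_1(\mathrm{B})$, and the $F_2$ terms contribute $z\,F_2(\mathrm{B}+\mathrm{B}'+z)+\mathrm{B}'\,F_2(\mathrm{B}+\mathrm{B}'+z)-\mathrm{B}'\,F_2(\mathrm{B}+\mathrm{B}')$.

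It remains to collapse the terms in which the second umbra $\mathrm{B}'$ occurs both as a scalar factor and inside the argument. Since $F_1(\mathrm{B})$ does not involve $\mathrm{B}'$, the term $-\big(z\mathrm{B}'+\tfrac{z^2}{2}\big)F_1(\mathrm{B})$ is evaluated simply by $\mathrm{B}'^{1}=\mathrm{B}_1=\tfrac12$, turning it into $-\tfrac{z^2+z}{2}F_1(\mathrm{B})$. For the other two terms, the bookkeeping identity $\mathrm{B}'\,\phi(\mathrm{B}')=\phi(\tilde{\mathrm{B}}')$ — where $\tilde{\mathrm{B}}'$ is the umbra with $\tilde{\mathrm{B}}'^{\,n}=\mathrm{B}_{n+1}$, immediate from $\mathrm{B}'\cdot\mathrm{B}'^{\,n}=\mathrm{B}'^{\,n+1}=\mathrm{B}_{n+1}$ together with the linearity of $\mathrm{eval}_{\mathrm{B}'}$ — converts $\mathrm{B}'\,F_2(\mathrm{B}+\mathrm{B}'+z)$ into $F_2(\mathrm{B}+\tilde{\mathrm{B}}'+z)$ and $\mathrm{B}'\,F_2(\mathrm{B}+\mathrm{B}')$ into $F_2(\mathrm{B}+\tilde{\mathrm{B}}')$. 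Collecting all pieces, and renaming $\mathrm{B}'$ as the second $\mathrm{B}$ (and $\tilde{\mathrm{B}}'$ as $\tilde{\mathrm{B}}$), produces exactly
\[
-f^{(-3)}(\mathrm{B}+\mathrm{B}+z)+z f^{(-2)}(\mathrm{B}+\mathrm{B}+z)+f^{(-2)}(\mathrm{B}+\tilde{\mathrm{B}}+z)-\tfrac{z^2+z}{2}f^{(-1)}(\mathrm{B})-f^{(-2)}(\mathrm{B}+\tilde{\mathrm{B}})+f^{(-3)}(\mathrm{B}+\mathrm{B}),
\]
as claimed.

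The real work — hidden in the phrase ``for suitable $f$'' — is to make the two applications of Theorem \ref{MScomp} legitimate, i.e. to exhibit a growth class for $f$ such that $g(n)=n\big(F_1(\mathrm{B}+n)-F_1(\mathrm{B})\big)$, obtained from $f$ by one fractional summation, a shift by the Bernoulli umbra, and multiplication by the polynomial $n$, still lies in some $\mathcal{T}_{-2\pi,0;2\pi,0}^{(p-1)}$ with $f^{(p)}(t+\mathrm{i}\xi)\to 0$. This is precisely the stability bookkeeping developed earlier: Theorem \ref{hierarchy} gives closure of the hierarchy $\mathcal{T}_{a,k;b,l}^{(\cdot)}$ under antidifferentiation, Theorem \ref{naturalthem} controls $F_1(\mathrm{B}+\mathrm{i}z)$ as a function of $z$ and the well-definedness of the expressions $f(A_k+A_j+\mathrm{i}z)$ and $f((A_k[+]A_j)+\mathrm{i}z)$ once the hypotheses on indices and dominating intervals are met, the well-definedness propositions of Section 2 legitimise both the appearance of two independent Bernoulli umbrae and the interchange of the two umbral evaluations, and multiplication by $n$ only shifts the admissible exponents $k,l$ and the order $p$. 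Once such a class is fixed, the computation above goes through line by line; I expect this index and growth bookkeeping, rather than any individual algebraic step, to be the main obstacle.
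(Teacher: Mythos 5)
The paper states this lemma without proof (it appears in the ``possible developments'' section), so there is no official argument to compare against; judged on its own, your derivation is correct and is exactly the route the surrounding material sets up: two applications of Theorem \ref{MScomp} (inner sum in $\mathrm{B}$, outer sum in an independent copy $\mathrm{B}'$), the Section 2 well-definedness and interchange facts for the two umbrae, and the bookkeeping identity $\mathrm{B}'\phi(\mathrm{B}')=\phi(\tilde{\mathrm{B}}')$, which is precisely the item $f(\tilde{\mathrm{B}})=\mathrm{B}f(\mathrm{B})$ of the lemma immediately following the statement. Your evaluation $\mathrm{B}'^{1}=\tfrac12$ is consistent with the paper's convention $\mathrm{B}=(z\mathrm{e}^z/(\mathrm{e}^z-1),\Omega_{-2\pi,2\pi})$, the final expression is invariant under changing the coherent antiderivative chain $f^{(-1)},f^{(-2)},f^{(-3)}$ (the constants cancel, using $(\mathrm{B}+\mathrm{B})^1=1$ and $\tilde{\mathrm{B}}^0=\tfrac12$), and the formula passes the sanity check $f\equiv 1$, where it reduces to $z(z+1)(2z+1)/6$. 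The only open point — exhibiting a growth class of $f$ for which $g(n)=n\bigl(f^{(-1)}(\mathrm{B}+n)-f^{(-1)}(\mathrm{B})\bigr)$ again meets the hypotheses of Theorem \ref{MScomp}, via Theorems \ref{naturalthem} and \ref{hierarchy} — you flag explicitly, and this matches the lemma's own unquantified ``for suitable $f$'', so it is not a defect relative to what the paper itself claims.
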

\begin{lemm}
	Suppose $\tilde{\mathrm{B}}^n=\mathrm{B}_{n+1},
	\tilde{\tilde{\mathrm{B}}}^n=\mathrm{B}_{n+2}$.
	\begin{enumerate}
		\item For suitable $f$, we have $f(\tilde{\mathrm{B}})=\mathrm{B}f(\mathrm{B}),
		f(\tilde{\tilde{\mathrm{B}}})=\mathrm{B}^2 f(\mathrm{B})$;
		\item $\mathrm{B}+\mathrm{B}=
		\mathrm{B}[+]
		(\mathrm{B}+\mathrm{D})[-]
		(\tilde{\mathrm{B}}+\mathrm{D})$;
		\item $\mathrm{B}+\tilde{\mathrm{B}}=
		(\mathrm{B}[+]
		(\tilde{\mathrm{B}}+\mathrm{D})[-]
		(\tilde{\tilde{\mathrm{B}}}+\mathrm{D}))+[\frac{1}{2}]$.
	\end{enumerate}
\end{lemm}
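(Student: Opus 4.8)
The plan is to handle the first claim with the analytic machinery for the evaluation functional, and to reduce the second and third to the single functional equation $\mathcal{B}(z)(1-\mathrm{e}^{-z})=z$ satisfied by the Bernoulli generating function $\mathcal{B}(z)=\frac{z\mathrm{e}^z}{\mathrm{e}^z-1}$, once the umbral operations are rewritten at the level of generating functions.

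For part 1, I would first observe that, since $A^n=\mathcal{A}^{(n)}(0)$, the umbrae $\tilde{\mathrm{B}},\tilde{\tilde{\mathrm{B}}}$ have generating functions $\tilde{\mathcal{B}}=\mathcal{B}'$ and $\tilde{\tilde{\mathcal{B}}}=\mathcal{B}''$, which remain exponential-type analytic on $\Omega_{-2\pi,2\pi}$ with index $(0,1)$ because differentiation leaves the poles $2\pi\mathrm{i}k$ ($k\neq 0$) in place. The second ingredient is the differentiation rule for the generalized Fourier transform of Definition \ref{genft}: for exponential-type analytic $g$ on a strip, integration by parts along $\mathbb{R}-\mathrm{i}t$ — legitimate after the Gauss--Weierstrass regularization and Lemma \ref{welldeflem} — gives $\widehat{g'}(z)=\mathrm{i}z\,\hat g(z)$. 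Hence $\hat{\tilde{\mathcal{B}}}(z)=\mathrm{i}z\,\hat{\mathcal{B}}(z)$ and $\hat{\tilde{\tilde{\mathcal{B}}}}(z)=(\mathrm{i}z)^2\hat{\mathcal{B}}(z)$, and substituting into Definition \ref{Evaldef} yields $f(\tilde{\mathrm{B}})=\int_{-\infty-\mathrm{i}t}^{\infty-\mathrm{i}t}\hat{\mathcal{B}}(z)(\mathrm{i}z)f(\mathrm{i}z)\,\mathrm{d}z=\mathrm{eval}_{\mathrm{B}}(zf(z);z)=\mathrm{B}f(\mathrm{B})$, and likewise $f(\tilde{\tilde{\mathrm{B}}})=\mathrm{eval}_{\mathrm{B}}(z^2f(z);z)=\mathrm{B}^2f(\mathrm{B})$. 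Here ``suitable $f$'' should be spelled out as: $f$ exponential type on a vertical strip meeting $(0,1)$ with $(1+|z|)^2\,|\hat{\mathcal{B}}(z)f(\mathrm{i}z)|$ Lebesgue integrable on $\mathbb{R}-\mathrm{i}t$ for some $t\in(0,1)$; since $\hat{\tilde{\mathcal{B}}},\hat{\tilde{\tilde{\mathcal{B}}}}$ differ from $\hat{\mathcal{B}}$ only by a polynomial factor, this makes all the evaluations above simultaneously defined.

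For parts 2 and 3, I would translate each side via $A_1+A_2\mapsto\mathcal{A}_1\mathcal{A}_2$, $A_1[+]A_2\mapsto\mathcal{A}_1+\mathcal{A}_2$, $A_1[-]A_2\mapsto\mathcal{A}_1-\mathcal{A}_2$, $A+\mathrm{D}\mapsto z\mathcal{A}(z)$ and $A+[c]\mapsto c\,\mathcal{A}(z)$, noting that every umbrae in sight shares the dominating interval $\Omega_{-2\pi,2\pi}$ (the factors $\mathrm{D}$ and $[\frac{1}{2}]$ live on all of $\mathbb{C}$ and do not shrink it). Then part 2 becomes the generating-function identity $\mathcal{B}^2=\mathcal{B}+z\mathcal{B}-z\mathcal{B}'$ and part 3 becomes $2\mathcal{B}\mathcal{B}'=\mathcal{B}+z\mathcal{B}'-z\mathcal{B}''$. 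Both fall out of $\mathcal{B}(z)(1-\mathrm{e}^{-z})=z$: logarithmic differentiation gives $z\mathcal{B}'=\mathcal{B}-\mathrm{e}^{-z}\mathcal{B}^2$ (equivalently $\mathcal{B}-z\mathcal{B}'=\mathrm{e}^{-z}\mathcal{B}^2$), one more differentiation gives $z\mathcal{B}''=\mathrm{e}^{-z}\mathcal{B}^2-2\mathrm{e}^{-z}\mathcal{B}\mathcal{B}'$, and substituting $\mathcal{B}-z=\mathrm{e}^{-z}\mathcal{B}$ (so $\mathrm{e}^{-z}\mathcal{B}^2=(\mathcal{B}-z)\mathcal{B}$) makes both identities collapse by direct cancellation.

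The computations in parts 2 and 3 are routine; the delicate point — which I would flag as the main obstacle — is the interpretation layer, i.e.\ confirming that these generating-function identities really do give the umbrae identities in the sense used elsewhere in the paper: for suitable $f$ the evaluations $f(\mathrm{B}+\mathrm{B})$, $f(\mathrm{B}[+](\mathrm{B}+\mathrm{D})[-](\tilde{\mathrm{B}}+\mathrm{D}))$ and $f((\mathrm{B}[+](\tilde{\mathrm{B}}+\mathrm{D})[-](\tilde{\tilde{\mathrm{B}}}+\mathrm{D}))+[\frac{1}{2}])$ must all be defined and agree. The $[+]/[-]$ pieces are handled by the additivity in Theorem \ref{naturalthem}, the $+\mathrm{D}$ pieces by Theorem \ref{intercderi} (turning $f(\mathrm{B}+\mathrm{D})$ into $f'(\mathrm{B})$ and $f(\tilde{\mathrm{B}}+\mathrm{D})$ into $f'(\tilde{\mathrm{B}})$, after which part 1 applies to $f'$), and $f(\mathrm{B}+\mathrm{B})$ by Proposition \ref{welldefplus}; the hypothesis of Theorem \ref{intercderi}, namely a uniform local bound on $\int|\hat{\mathcal{B}}(z)f(\mathrm{i}z+z_0)|\,\mathrm{d}z$, is available because $\hat{\mathcal{B}}$ decays at most polynomially on horizontal lines by Lemma \ref{goodumbrae}, and this is exactly what fixes the class of ``suitable $f$'' — the same class as in part 1.
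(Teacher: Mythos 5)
Your proposal is correct, and in fact the paper gives no proof of this lemma at all (it sits in the ``possible developments'' section, stated without argument), so your write-up supplies what the paper omits. The reduction you use is the natural one and matches how the paper treats its other umbral identities (e.g.\ $\mathrm{B}+\Delta=1+\mathrm{D}$): since ``$+$'', ``$[+]$'', ``$[-]$'', $\mathrm{D}$ and $[c]$ are all defined directly on generating functions with a common dominating strip $\Omega_{-2\pi,2\pi}$, parts 2 and 3 are literally the identities $\mathcal{B}^2=\mathcal{B}+z\mathcal{B}-z\mathcal{B}'$ and $2\mathcal{B}\mathcal{B}'=\mathcal{B}+z\mathcal{B}'-z\mathcal{B}''$, and your derivation of both from $\mathcal{B}(z)(1-\mathrm{e}^{-z})=z$ checks out (I verified the low-order Taylor coefficients as well). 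Your part 1 via $\tilde{\mathcal{B}}=\mathcal{B}'$, $\tilde{\tilde{\mathcal{B}}}=\mathcal{B}''$ and the rule $\widehat{g'}(z)=\mathrm{i}z\hat g(z)$ for the generalized transform is also sound; the boundary terms in the integration by parts vanish on $\mathbb{R}-\mathrm{i}t$ precisely because the transform is taken at height $s\in(0,1)$ inside the regular interval, which is worth saying explicitly. One small imprecision in your final ``interpretation layer'': Lemma \ref{goodumbrae} gives $|\hat{\mathcal{B}}(\xi-\mathrm{i}t)|\lesssim C(t)\mathrm{e}^{-2\pi|\xi|}$, i.e.\ exponential (not ``at most polynomial'') decay; that stronger decay is exactly what makes the uniform bound in Theorem \ref{intercderi} and the simultaneous integrability of $(1+|z|)^2\hat{\mathcal{B}}(z)f(\mathrm{i}z)$ available for the class of $f$ you describe, so the conclusion stands.
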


Finally, we can also see that if we allow the umbrae to be written as the difference between two analytic functions with disjoint domains, like the Fourier transform of the function $f$, we can eventually cover more general integral transforms.

\bibliography{umbral_calculus}
\end{document}